\documentclass[12pt]{amsart}
\pdfoutput=1 % for arXiv.org

%   OPTIONS (define the command to toggle the function):
%       \showkeys - show labels in the pdf file
%

\usepackage{ifpdf}

\usepackage{amsmath}
\usepackage{amsthm}
\usepackage{amsfonts}
\usepackage{amssymb}

\ifpdf
    % must be before amsrefs for backrefs to work
    \usepackage[colorlinks,citecolor=blue,linkcolor=blue]{hyperref}
\fi

\usepackage{amscd}	% commutative diagrams
\usepackage{cancel}	% lines to cross-out canceled terms
\usepackage{microtype} % pdftex microtype features

\usepackage{graphicx}
\usepackage{color}

\ifdefined\draftversion
    \providecommand{\showkeys}{}
    % show backrefs in citations
    \usepackage[shortalphabetic,initials,backrefs]{amsrefs}
\else
    \usepackage[shortalphabetic,initials]{amsrefs}
\fi

\ifdefined\showkeys
	\usepackage[color]{showkeys} % show labels in the pdf file, in grey color
\fi

% git version support

% footnote with no number, does not increment the counter
\makeatletter
\def\blfootnote{\xdef\@thefnmark{}\@footnotetext}
\makeatother

\IfFileExists{.git/git.tex}{
\input{.git/git.tex}

}
{

}

% Figures are assumed to be produced by Inkscape, latex/pdf format.
%
% \fig{name}{width}:
%			name - 		include image from name.pdf_tex/name.pdf
%			width - 	width of the figure
%
\ifpdf
  \newcommand{\fig}[2]{
    \IfFileExists{#1.pdf_tex}{
      \def\svgwidth{#2}\input{#1.pdf_tex}
    }{
      \frame{Missing figure ``#1.pdf\_tex''}
      \message{LaTeX Warning: Missing figure ``#1.pdf\_tex'' on input line \the\inputlineno}
    }
  }
\else
    \IfFileExists{#1.eps_tex}{
      \def\svgwidth{#2}\input{#1.eps_tex}
    }{
      \frame{Missing figure ``#1.eps\_tex''}
      \message{LaTeX Warning: Missing figure ``#1.eps\_tex'' on input line \the\inputlineno}
    }
  }
\fi

% global constants
\newcommand{\dimension}{n}

% operators
\newcommand{\dist}{\operatorname{dist}}

\newcommand{\interior}{\operatorname{int}}
\newcommand{\trace}{\operatorname{tr}}

\newcommand{\esssup}{\operatorname*{ess\,sup}}
\newcommand{\essinf}{\operatorname*{ess\,inf}}

\newcommand{\argmax}{\operatorname*{arg\,max}}

% differential operators

\newcommand{\divo}{\operatorname{div}}

% real and imaginary

% useful symbols

% differentials
\newcommand{\dx}{\;dx}

% constructs

% text in math

% brackets
\newcommand{\abs}[1]{\left|#1\right|}
\newcommand{\pth}[1]{\left(#1\right)}
\newcommand{\bra}[1]{\left[#1\right]}
\newcommand{\set}[1]{{\left\{#1\right\}}}

\newcommand{\at}[2]{{{\left.{#1}\right|}_{#2}}}

\newcommand{\norm}[1]{\left\|#1\right\|}

% overlines
\newcommand{\cl}[1]{\overline{#1}}	% closure

% greek letters
\newcommand{\e}{\ensuremath{\varepsilon}}

% number sets

\newcommand{\R}{\ensuremath{\mathbb{R}}}

\newcommand{\Rd}{\ensuremath{{\mathbb{R}^{\dimension}}}}
\newcommand{\Rn}{\Rd}
\newcommand{\Z}{\ensuremath{\mathbb{Z}}}
\newcommand{\N}{\ensuremath{\mathbb{N}}}

\newcommand{\T}{\ensuremath{\mathbb{T}}}
\newcommand{\Tn}{{\T^\dimension}}

% half-relaxed limits

%sequences

% differentiation

\newcommand{\td}[2]{\frac{d {#1}}{d {#2}}}

% colors
\definecolor{grey}{rgb}{0.6,0.6,0.6}

% tags

% lists

% equation numbering
\numberwithin{equation}{section}

% environments
% \swapnumbers	% use the format `1.2 Theorem`
\newtheorem{theorem}{Theorem}[section]
\newtheorem{lemma}[theorem]{Lemma}
\newtheorem{proposition}[theorem]{Proposition}
\newtheorem{corollary}[theorem]{Corollary}

\newtheorem{definition}[theorem]{Definition}

\theoremstyle{definition}

\newtheoremstyle{remarkstyle}% name of the style to be used
  {3pt}% measure of space to leave above the theorem. E.g.: 3pt
  {3pt}% measure of space to leave below the theorem. E.g.: 3pt
  {\small}% name of font to use in the body of the theorem
  {}% measure of space to indent
  {\bfseries}% name of head font
  {.}% punctuation between head and body
  { }% space after theorem head; " " = normal interword space
  {}% Manually specify head

\theoremstyle{remarkstyle}
\newtheorem{remark}[theorem]{Remark}
\newtheorem{example}[theorem]{Example}
\usepackage[margin=1in]{geometry}
\usepackage{bbm}
\newcommand{\CH}{{CH}}

\newcommand{\domain}{\mathcal D}
\newcommand{\Wulff}{\mathcal W}
\newcommand{\TT}{\mathcal{T}}
\newcommand{\SW}{W^{\rm sl}}
\newcommand{\SE}{E^{\rm sl}}
\newcommand{\sign}{\operatorname{sign}}
\newcommand{\aff}{\operatorname{aff}}
\newcommand{\facet}{A}

\title[Approximation by regular facets]{Approximation of general facets by regular facets with respect to anisotropic total variation
energies and its application to the crystalline mean curvature flow}

\author[Y. Giga]{Yoshikazu Giga}
\author[N. Po\v{z}\'{a}r]{Norbert Po\v{z}\'{a}r}

\date{\today}

\keywords{}
\subjclass[2010]{}

\begin{document}

\begin{abstract}
  We show that every bounded subset of an Euclidean space can be approximated by a set that admits
  a certain vector field, the so-called Cahn-Hoffman vector field, that is subordinate to a given
  anisotropic metric and has a square-integrable divergence.  More generally, we introduce a
  concept of facets as a kind of directed sets, and show that they can be approximated in a similar
  manner.

  We use this approximation to construct test functions necessary to prove the comparison principle
  for viscosity solutions
  of the level set formulation of the crystalline mean curvature flow that were recently
  introduced by the authors.  As a consequence, we obtain the well-posedness of the
  viscosity solutions in an arbitrary dimension, which extends the validity of the result in the
  previous paper.
\end{abstract}

\maketitle

\section{Introduction}

In this note we consider the question of approximating general compact subsets of an
Euclidean space by sets whose boundary regularity is similar to the regularity of
the Wulff shape of a given anisotropic norm. This regularity is related to the existence of the
so-called Cahn-Hoffman vector field with an $L^2$ divergence on the set.

To give a more specific example of what we have in mind, let $n \geq 1$ be a dimension, $\facet \subset \Rn$ be a compact set and
$\rho > 0$. Then there exists a compact set $G \supset \facet$ with a smooth boundary such that
$d_H(\partial G, \partial \facet) < \rho$, and a Lipschitz
continuous vector field $z: G \to \Rn$ such that $\abs{z(x)} \leq 1$, $z \cdot \nu = 1$ on $\partial
G$. Here $\nu$ is the unit outer normal vector on $\partial G$ and $d_H$ is the Hausdorff distance
with respect to the Euclidean norm. For this result see
\cite{GGP14JMPA}. Such a vector field is an example of a Cahn-Hoffman vector field for the
isotropic metric given by the Euclidean norm $\abs\cdot$. The unit ball $\set{z: |z| \leq 1}$ is
its Wulff shape. Such vector fields are important for
evaluating the anisotropic curvature of a surface or the subdifferential of a total variation
energy. Unfortunately,
no Cahn-Hoffman vector field might exist for a given set, in which case the anisotropic curvature cannot be
evaluated. However, the above result allows us to approximate arbitrary sets by sets for which the
anisotropic curvature is well-defined.

The goal of the present note is to extend this result to general anisotropic norm-like functions
(non necessarily symmetric) $W: \Rn \to [0, \infty)$. We assume that $W$ is (i) convex, (ii)
positively one-homogeneous, $W(tp) = t W(p)$ for all $p \in \Rn$, $t \geq 0$, and (iii) positive
definite, $W(p) = 0$ if and only if $p = 0$. We will call such a function $W$ an \emph{anisotropy}.
Given $W$, its polar $W^\circ$ is defined as
\begin{align}
  \label{polar}
  W^\circ(x) := \sup \set{x \cdot p: p \in \Rn,\ W(p) \leq 1}.
\end{align}
$W^\circ$ is also an anisotropy, see \cite{Rockafellar}.
The \emph{Wulff shape $\Wulff$} of $W$ is given as the set
\begin{align*}
  \Wulff := \set{x \in \Rn: W^\circ(x) \leq 1}.
\end{align*}
By the positive definiteness of $W^\circ$, $\Wulff$ is a compact set containing $0$ in its
interior.
We claim that any compact set $\facet$ can be approximated by a larger set $G$ with the properties as
above but with respect to the anisotropy $W$. However, we need to relax the
notion of boundary values of $z$. We will show that for every $\rho > 0$ there exists a compact set
$G \supset \facet$, $d_H(\partial G, \partial \facet) < \rho$, an open set $U \supset G$, a Lipschitz continuous function $\psi$ on $U$, $\psi
\geq 0$, $\psi(x) = 0$ if and only if $x \in G$, and a vector field $z \in L^\infty(U, \Rn)$ with
$\divo z \in L^2(U)$ such that $z \in \Wulff$, $z \cdot \nabla \psi = W(\nabla \psi)$ a.e. on
$U$.
Note that the last condition on the vector field $z$ is equivalent to requiring $z \in \partial
W(\nabla \psi)$ a.e. on $U$, where $\partial W$ is the subdifferential of $W$ with respect to the
Euclidean inner product $x \cdot y$, given as
\begin{align}
  \label{subdifferential-W}
  \partial W(p) := \set{\xi \in \Rn: W(p + h) - W(p) \geq \xi \cdot h, \ h \in \Rn} = \set{\xi \in
  \Rn: \xi \in \Wulff,\ \xi \cdot p = W(p)}.
\end{align}

As we will see later, the approximation discussed so far is related to the surfaces of convex or concave
solid bodies. To handle general solid bodies,
we want to control the direction in which the vector field $z$ flows
through the boundary of the set $\facet$. Therefore we introduce a concept of \emph{directionality} of a set. In
particular, we specify which components of the complement are sources and which are sinks of the
vector field. We will call such directed sets \emph{facets}.

\begin{definition}
  \label{de:facet}
Let $n \geq 0$ be the dimension. A compact set $\facet \subset \Rn$ together with a direction $\chi \in
C(\facet^c, \set{-1, 1})$ is called an \emph{$n$-dimensional facet}, and we write it as $(\facet, \chi)$.
We will set $\chi(x) = 0$ for $x \in \facet$.
\end{definition}

The direction $\chi$ introduces an order on the family of all facets: $(\facet, \chi) \preceq (G, \tilde
\chi)$ if $\chi \leq \tilde\chi$. If $\chi, \tilde \chi \geq 0$, we have $(\facet, \chi) \preceq (G,
\tilde \chi)$ if and only if $\facet \subset G$. We now introduce ``regular'' facets that admit a certain
vector field subordinate to the anisotropy $W$.

\begin{definition}
  \label{de:cahn-hoffman-facet}
We say that $(\facet, \chi)$ is a
\emph{$W^\circ$-$(L^2)$ Cahn-Hoffman facet} if there exists an open set $U
\subset \Rn$, a Lipschitz function $\psi$ on $U$ and a vector field $z \in L^\infty(U, \Rn)$, $\divo
z \in L^2(U)$, such that $\facet
\subset U$, $\sign \psi = \chi$ on $U$, $z(x) \in \partial W(\nabla \psi(x))$ for a.e. $x \in U$.
Here we define $\sign s$ to be $-1, 0, 1$ if $s < 0$, $s = 0$ or $s > 0$, respectively.
\end{definition}

If $\facet$ has a Lipschitz boundary, we would like to say that $(\facet, \chi)$ is a $W^\circ$-$(L^2)$ Cahn-Hoffman facet if there exists a vector field $z \in
L^\infty(\facet,\Rn)$, $\divo z \in L^2(\facet)$ such that $z \in \Wulff$ on $\facet$ and $z(x) \in \partial
W(\chi(x+\nu 0+) \nu(x))$ on $\partial \facet$, where $\chi(x + \nu0+)$ is the limit of $\chi$ at $x$
from the direction outside of $\facet$ and $\nu(x)$ is the unit outer normal to $\facet$ at $x$.
However, we currently do not know if this is equivalent to
Definition~\ref{de:cahn-hoffman-facet}.

Such vector fields $z$ are usually referred to as Cahn-Hoffman vector fields. We are mainly interested in
the divergence of these vector fields, in particular in the one that is minimal in the $L^2$-norm
on $\facet$. We call such an $L^2$-function
the \emph{$W^\circ$-$(L^2)$ minimal divergence of the facet $(\facet, \chi)$},
Definition~\ref{de:minimal-divergence-facet}.
We will see later, Proposition~\ref{pr:Lambda-comparison}, that this minimal divergence is unique
and depends only on $\facet$ and $\chi$.

The main claim of this paper is that an arbitrary $n$-dimensional facet $(\facet, \chi)$ can be
approximated by a
Cahn-Hoffman facet $(G, \tilde \chi)$ consistently with the direction, that is, $(\facet, \chi) \preceq (G,
\tilde \chi)$.

\begin{theorem}
  \label{th:main}
  Let $(\facet, \chi)$ be an $n$-dimensional facet and $W$ an anisotropy.
  Given $\rho > 0$ there
  exists a $W^\circ$-$(L^2)$ Cahn-Hoffman facet $(G, \tilde \chi)$ such that $\chi(x) \leq \tilde \chi(x) \leq
  \sup_{\abs{x - y} \leq \rho}\chi(y)$ for $x \in \Rn$.
\end{theorem}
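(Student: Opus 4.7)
I would build $(G,\tilde\chi)$ as a zero-level-set facet of the minimizer of an anisotropic total variation resolvent problem whose datum encodes both $\chi$ and the tolerance $\rho$, generalizing the isotropic construction recalled in the introduction. First, pick a bounded open domain $\Omega \supset \facet + \rho\Wulff$ and build a Lipschitz function $f:\Omega\to\R$ with $\chi(x)\le \sign f(x)\le\sup_{|y-x|\le\rho}\chi(y)$ pointwise, whose zero set lies within Hausdorff distance $\rho$ of $\partial \facet$. A concrete choice is a truncation of the $W^\circ$-signed distance to $\partial \facet$, signed according to $\chi$ on each component of $\facet^c$ and adjusted on small components of $\{\chi=-1\}$ that sit within distance $\rho$ of $\{\chi=+1\}$ by flipping their sign to positive.

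Next, for a small parameter $\lambda>0$, minimize
\begin{equation*}
  \mathcal{E}_\lambda(u)=\int_\Omega W(\nabla u)\,dx + \tfrac{1}{2\lambda}\int_\Omega (u-f)^2\,dx
\end{equation*}
over $u\in W^{1,\infty}(\Omega)$ with Dirichlet datum $u=f$ on $\partial\Omega$. Convexity and coercivity of $W$ give a minimizer $\psi$, which one shows to be Lipschitz by the usual comparison-with-translates trick based on the Lipschitz regularity of $f$. The first-order optimality condition yields a vector field $z\in L^\infty(\Omega,\Rn)$ with $z(x)\in\partial W(\nabla\psi(x))$ a.e.\ and $\divo z=(\psi-f)/\lambda\in L^2(\Omega)$. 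Setting $U=\Omega$, $G=\{\psi=0\}$, and $\tilde\chi=\sign\psi$ on $G^c$, the triple $(U,\psi,z)$ certifies Definition~\ref{de:cahn-hoffman-facet}, so $(G,\tilde\chi)$ is a $W^\circ$-$(L^2)$ Cahn-Hoffman facet.

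The sign ordering follows from a comparison principle for the resolvent: if $f_1\le f_2$ then the corresponding minimizers satisfy $\psi_1\le\psi_2$ (test the difference of the Euler--Lagrange identities with $(\psi_1-\psi_2)^+$ and use monotonicity of $\partial W$). Applied to barrier data $f_\pm$ with $\sign f_-=\chi$ and $\sign f_+(x)=\sup_{|y-x|\le\rho}\chi(y)$, this transports the pointwise bound in the statement to $\tilde\chi=\sign\psi$.

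The main obstacle will be controlling the location of the zero set $\{\psi=0\}$ inside the $\rho$-tubular neighborhood of $\partial \facet$: $L^2$-convergence $\psi\to f$ as $\lambda\to 0$ does not automatically yield Hausdorff convergence of level sets. The standard remedy is to sandwich $\{\psi=0\}$ between explicit Wulff-shape sub- and supersolutions (recall that $x_0+r\Wulff$ carries the affine Cahn--Hoffman field $z(x)=(x-x_0)/r$), choosing $\lambda$ small in terms of $\rho$, $W$, and the modulus of continuity of $f$. A secondary technical point is ensuring the Lipschitz regularity of $\psi$ needed for pointwise $\nabla\psi$; this is typically obtained by adding a small quadratic regularizer $\varepsilon\int|\nabla u|^2$ and passing to the limit $\varepsilon\to 0$ while preserving the $L^2$ bound on $\divo z$.
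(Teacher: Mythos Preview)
Your resolvent approach is a natural alternative to the paper's gradient-flow construction (Sections~2--3): both push the datum into $\domain(\partial E)$ via the anisotropic total variation energy, and both rely on Wulff-shape barriers to control how far the zero level set moves. The paper uses the continuous flow $\tfrac{d\varphi}{dt}\in-\partial E(\varphi)$ and picks a small time; you propose one implicit Euler step and pick a small $\lambda$. That part is fine.

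The genuine gap is in your sign-ordering argument. The resolvent comparison principle you invoke says $f_1\le f_2\Rightarrow\psi_1\le\psi_2$; it transports \emph{pointwise} inequalities between functions, not inequalities between their \emph{signs}. From $f_-\le f\le f_+$ with $\sign f_-=\chi$ and $\sign f_+=\sup^\rho\chi$ you obtain $\psi_-\le\psi\le\psi_+$, but to conclude $\chi\le\sign\psi$ you would need $\sign\psi_-\ge\chi$, which is exactly the unknown you started with. Concretely, take $n=1$, $A=[0,1]$, $\chi\equiv-1$ on $A^c$: any single Lipschitz $f$ with $\sign f=\chi$ is $\le 0$, so its resolvent $\psi\le 0$, but you need $\psi=0$ on all of $[0,1]$; generically the zero set of $\psi$ is a strict subinterval, so $\tilde\chi=-1$ on part of $A$ and the lower bound fails.

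What your plan is missing is the paper's decoupling: it runs \emph{two} flows, one with datum built from a fattening $H_+\supset\{\chi=1\}$ and one from $H_-\supset\{\chi\ge 0\}$, and then sets $\psi=(\varphi^+)_+-(\varphi^-)_-$. The positive part of $\psi$ is governed solely by $\varphi^+$ and the negative part solely by $\varphi^-$, so the Wulff barriers (your ``standard remedy'') applied separately to each flow give one-sided sign control that, after pre-fattening by fractions of $\rho$, reassembles into $\chi\le\tilde\chi\le\sup^\rho\chi$. With a single resolvent there is no such decoupling: shrinking $\lambda$ keeps $\{\psi=0\}$ Hausdorff-close to $\{f=0\}$ but does not force $\{\psi=0\}\supset A$, which is what $\chi\le\tilde\chi$ demands on $A$. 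Your outline can be repaired by adopting the same two-problem splitting (with resolvents instead of flows) and patching via cutoffs as the paper does; as written, the single-function construction does not close.
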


The main application of the approximation result Theorem~\ref{th:main} is the recently
developed notion of viscosity solutions of the level set formulation of a crystalline curvature
flow by the authors in \cite{GP16}.
In particular, the viscosity solutions of the initial value problem
\begin{align}
  \label{level-set}
  \left\{
    \begin{aligned}
      u_t + F(\nabla u, \divo (\nabla W)(\nabla u)) &= 0, && \text{in $\Rn \times (0,
      \infty)$},\\
      \at{u}{t = 0} &= u_0, && \text{in $\Rn$},
    \end{aligned}
  \right.
\end{align}
were introduced. Here $F: \Rn \times \R \to \R$ is assumed to be a continuous function that is
nonincreasing in the second variable, and the initial data $u_0$ is assumed to be continuous and
constant outside of a compact set. Finally, $W$ is a \emph{crystalline} anisotropy, that is, a
piece-wise linear anisotropy.

This problem can be derived as the level set formulation of the motion of a set
$\set{E_t}_{t \geq 0}$ by the
\emph{crystalline} mean curvature flow
\begin{align}
  \label{curvature-flow}
V = f(\nu, \kappa_W),
\end{align}
where $V$ is the normal velocity of the
surface $\partial E_t$, $\nu$ is its outer unit normal vector and $\kappa_W = -\divo_{\partial E_t} (\nabla W)(\nu)$ is the
anisotropic (crystalline) mean curvature of $\partial E_t$. The function $f: \mathcal S^{n-1}
\times \R \to \R$ is assumed to be continuous and nondecreasing in the second variable.
The idea of the level set method is to introduce an auxiliary function $u: \Rn \times [0, \infty) \to
\R$ whose every sub-level set $t \mapsto \set{x: u(x, t) < c}$, $c \in \R$, evolves under the
crystalline mean
curvature flow. This function then satisfies the initial value problem \eqref{level-set} with an
appropriate $u_0$, typically so that $E_0 = \set{x: u_0(x) < 0}$.
Since $V = -\frac{u_t}{|\nabla u|}$, $\nu = \frac{\nabla u}{|\nabla u|}$ and $\divo_{\partial
E_t}(\nabla W)(\nu) = \divo (\nabla W)(\nabla u)$, see \cite{G06}, we deduce that $F(p,
\xi) = |p| f(\frac{p}{|p|}, -\xi)$.
In \cite{GP16}, the viscosity solutions of \eqref{level-set} were shown to satisfy the comparison
principle in three dimensions, and the well-posedness of this problem was established.

Theorem~\ref{th:main} allows us to extend the result of \cite{GP16} to an arbitrary dimension. For
a detailed discussion of the notion of viscosity solutions for \eqref{level-set} see
Section~\ref{sec:visc-sol}. Heuristically, the key idea is to define the viscosity solution using
the comparison principle with test functions that have flat parts that are $W^\circ$-$(L^2)$
Cahn-Hoffman facets, and then interpret the operator $\divo(\nabla W)(\nabla u)$ as the
$W^\circ$-$(L^2)$ minimal divergence of the facet. This is consistent with the theory of monotone
operators when \eqref{level-set} is in a divergence form. The approximation result in
Theorem~\ref{th:main} implies that the family of these test functions is sufficiently large.  We
have the following results.

\begin{theorem}[Comparison principle]
  \label{th:comparison-principle-intro}
  Let $W : \Rn \to \R$ be a crystalline anisotropy, and $F \in
  C(\Rn \times \R)$ be nonincreasing in the second variable, $F(0, 0) = 0$. Suppose that $u$ is an
  upper semicontinuous function and $v$ is a lower semicontinuous function, $u$ and $v$ are
  a viscosity
  subsolution and a viscosity supersolution, respectively, of \eqref{level-set} in $\Rn \times (0, T)$ for some $T >
  0$, and that there exist a compact set $K \subset \Rn$ and constants $c_u, c_v \in \R$ such that $u
  \equiv c_u \leq c_v \equiv v$ on $(\Rn \setminus K) \times [0,T]$. Then $u(\cdot, 0) \leq v(\cdot, 0)$ on $\Rn$
  implies $u \leq v$ on $\Rn \times (0, T)$.
\end{theorem}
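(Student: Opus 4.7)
The plan is to run the standard doubling-of-variables argument, but where the usual smooth test functions are replaced by stratified test functions whose flat facets are $W^\circ$-$(L^2)$ Cahn-Hoffman facets in the sense of Definition~\ref{de:cahn-hoffman-facet}. The approximation result, Theorem~\ref{th:main}, is precisely what guarantees that the class of admissible test functions is large enough to separate $u$ from $v$ at any purported point of positive difference. Throughout, I will work under the assumption that $u \equiv c_u \leq c_v \equiv v$ on $(\Rn \setminus K) \times [0,T]$, which is used only to localize suprema to a compact set and to rule out loss of mass at spatial infinity.

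First I would assume by contradiction that $M := \sup_{\Rn \times [0,T)} (u - v) > 0$, and exponentially shift by $e^{-\lambda t}$ (or subtract $\eta/(T-t)$) to force the supremum to be attained at an interior time. Then I would penalize,
\begin{equation*}
  \Phi_{\e, \alpha}(x,t,y,s) := u(x,t) - v(y,s) - \frac{|x-y|^2}{2\e} - \frac{(t-s)^2}{2\alpha},
\end{equation*}
and use the boundary decay to confine the maximizers $(x_\e,t_\e,y_\e,s_\e)$ to a compact set; as $\e,\alpha \to 0$ we obtain a common limit point $(\hat x, \hat t)$ with $u(\hat x,\hat t) - v(\hat x,\hat t) = M > 0$ and $\hat t > 0$. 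Away from facets of $W$, the argument is routine: the quadratic penalization yields classical test functions, and the monotonicity of $F$ in the second variable together with $F(0,0)=0$ would give the usual contradiction. The real content is the facet case.

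The central step is to construct, at $(x_\e,t_\e)$ and $(y_\e,s_\e)$, test functions $\vp$ above $u$ and $\psi$ below $v$ whose ``horizontal'' level sets are $W^\circ$-$(L^2)$ Cahn-Hoffman facets with a prescribed direction $\chi$, and whose common limiting facet $\facet$ captures the geometry of $\{u = u(\hat x, \hat t)\}$ at the contact point. I would take a compact set carrying the contact as a putative facet $(\facet,\chi)$, apply Theorem~\ref{th:main} to obtain enlargements $(G_u,\tilde\chi_u) \succeq (\facet,\chi)$ from the subsolution side and shrinkages $(G_v,\tilde\chi_v) \preceq (\facet,\chi)$ from the supersolution side (reversing the role of $\chi$), and then glue these spatial profiles with a smooth time factor so that $\vp$ and $\psi$ belong to the admissible class from \cite{GP16}. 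Feeding these into the definitions of sub- and supersolution produces the inequalities
\begin{equation*}
  \vp_t(x_\e,t_\e) + F(\nabla \vp, \Lambda_{G_u,\tilde\chi_u}) \leq 0,
  \qquad
  \psi_t(y_\e,s_\e) + F(\nabla \psi, \Lambda_{G_v,\tilde\chi_v}) \geq 0,
\end{equation*}
where $\Lambda$ denotes the $W^\circ$-$(L^2)$ minimal divergence of each facet.

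Subtracting and passing to the limit then requires two ingredients, and the harder one is the main obstacle. The softer ingredient is that the temporal and gradient slot parts behave well under the doubling, as in the classical Crandall--Ishii framework. The harder ingredient is the comparison of minimal divergences: by construction $(G_v,\tilde\chi_v) \preceq (G_u,\tilde\chi_u)$, and Proposition~\ref{pr:Lambda-comparison} should yield an ordering of the corresponding $\Lambda$'s pointwise on the common facet, which combined with the monotonicity of $F$ in the second variable closes the contradiction. The genuine obstacle lies in matching the direction $\chi$ chosen at the contact point so that the Cahn-Hoffman approximations from above and from below are simultaneously admissible on the \emph{same} facet and so that the $\Lambda$-ordering survives in the right direction; this is where the flexibility provided by Theorem~\ref{th:main}, which allows $\tilde\chi$ to be squeezed between $\chi$ and a small $\rho$-thickening of $\chi$, is indispensable. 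Once this matching is arranged, one sends $\e,\alpha \to 0$ (and then removes the temporal regularization), collects the resulting inequality $0 < $ (small) $\leq 0$, and concludes.
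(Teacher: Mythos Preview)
Your outline captures the spirit of the argument but misses the key technical device that makes it work, and without it the construction of admissible test functions cannot be carried out.

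The paper does \emph{not} use the plain doubling $\Phi_{\e,\alpha}(x,t,y,s) = u(x,t) - v(y,s) - \tfrac{|x-y|^2}{2\e} - \tfrac{(t-s)^2}{2\alpha}$. Instead it introduces an additional shift parameter $\zeta \in \Rn$ and studies $\Phi_{\zeta,\e}(x,t,y,s) = u(x,t) - v(y,s) - \tfrac{|x-y-\zeta|^2}{2\e} - S_\e(t,s)$. This is not a cosmetic change. One tracks the set of gradients $\mathcal B(\zeta) = \{(x-y-\zeta)/\e : (x,t,y,s) \in \argmax \Phi_{\zeta,\e}\}$ as $\zeta$ varies and uses a Baire category argument (Proposition~\ref{pr:Xi-existence}) on the stratification of $\Rn$ by the sets where $\partial W$ is constant to find a slope $\hat p$ and a direction subspace $Z_1$ such that $\mathcal B(\zeta)$ keeps hitting the same stratum for all $\zeta$ near some $\hat\zeta$. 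Only then does the constancy lemma (Lemma~\ref{le:max-constancy}, Corollary~\ref{co:contact-ordering}) yield genuine flatness of $u$ and $v$ in the $Z_1$-directions near the contact point. Without this flatness you cannot produce the level sets $\hat U$, $\hat V$ from which the raw facets $(F_u,\chi_u)$, $(F_v,\chi_v)$ are built, nor can you verify the crucial requirement in Definition~\ref{def:visc-solution} that $u - \varphi(\cdot - h)$ have a global maximum at $(\hat x,\hat t)$ for \emph{all} small shifts $h'$ in $Z_1$. Your plain quadratic penalty gives a single gradient $(x_\e - y_\e)/\e$, but no control over which stratum of $\partial W$ it lands in, and no mechanism to manufacture the flat profile the definition demands.

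A second, related gap is the ordering of the approximating facets. Theorem~\ref{th:main} only approximates from one side, $\chi \leq \tilde\chi \leq \sup^\rho \chi$. To obtain $(G_v,-\tilde\chi_v) \succeq (G_u,\tilde\chi_u)$ with a common interior point, the paper first designs the raw facets $(F_u,\chi_u)$ and $(F_v,\chi_v)$ with a carefully chosen separation scale $r = \lambda/5$ (see \eqref{chi_u_v} and Lemma~\ref{le:pair-properties}) so that the one-sided $\rho$-enlargements of each still interlock correctly. Your sketch asserts the ordering but does not indicate how the two independent applications of Theorem~\ref{th:main} are made compatible; this is exactly the ``genuine obstacle'' you flag, and it is resolved in the paper only because the $\zeta$-flatness supplies the margin $\lambda$ needed for the construction.
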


\begin{theorem}[Stability]
  \label{th:stability-intro}
  Let $W : \Rn \to \R$ be a crystalline anisotropy, and $F \in
  C(\Rn \times \R)$ be nonincreasing in the second variable.
  Let $u_m$, $m \in \N$, be a sequence of viscosity solutions of \eqref{level-set} with $W =
  W_m$ and initial data $u_{0,m}$. Suppose that the sequence $u_{0,m}$ is locally uniformly bounded.
  If either
  \begin{enumerate}
    \item $W_m \in C^2(\Rn)$, $a_m^{-1} \leq \nabla^2 W_m \leq a_m$ for some $a_m > 0$, and $W_m
      \searrow W$, or
    \item $W_m$ are smooth anisotropies, that is, $W_m$ is an anisotropy, $W_m \in C^2(\Rn \setminus \set0)$ and $\set{p:
      W_m(p) \leq 1}$ is strictly convex, such that $W_m \to W$ locally uniformly,
  \end{enumerate}
  then
  \begin{align*}
    \bar u(x,t) = \limsup_{(y, s, m) \to (x, t, \infty)} u_m(y,s) \quad \text{and} \quad
    \underline u(x,t) = \liminf_{(y, s, m) \to (x, t, \infty)} u_m(y,s)
  \end{align*}
  are respectively a viscosity subsolution and
   a viscosity supersolution of
   \begin{align*}
     u_t + F(\nabla u, \divo (\nabla W)(\nabla u)) = 0 \quad \text{in }\Rn \times (0, \infty).
   \end{align*}
\end{theorem}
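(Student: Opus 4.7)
The plan is to adopt the Barles--Perthame half-relaxed limits framework, adapted to the crystalline viscosity notion of \cite{GP16} recalled in Section~\ref{sec:visc-sol}. By symmetry it suffices to prove that $\bar u$ is a subsolution of the limit equation. Suppose $\varphi$ is an admissible test function and $\bar u - \varphi$ attains a strict local maximum at some $(x_0, t_0)$. By the definition of viscosity subsolution for the crystalline equation, $\varphi$ agrees near $(x_0, t_0)$ with a function whose spatial level set through $x_0$ contains a flat facet $A$ on which $\nabla \varphi \equiv p$; when $p \neq 0$, $A$ carries a $W^\circ$-$(L^2)$ Cahn-Hoffman structure with minimal divergence $\Lambda \in L^2(A)$. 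The target is an inequality of the form $\varphi_t(x_0, t_0) + F(p, \Lambda(x_0)) \leq 0$, in the appropriate essential-infimum sense.

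For each $m$ I build a smooth test function $\varphi_m$ adapted to $W_m$ by perturbing $\varphi$ on the facet. Away from $A$ I take $\varphi_m \approx \varphi$; on $A$ I use the Cahn-Hoffman vector field $z \in \partial W(p)$ a.e.\ with $\divo z = \Lambda$ to prescribe $(\nabla W_m)(\nabla \varphi_m) \approx z$. In case~(ii), the strict convexity and smoothness of $W_m$ make $\nabla W_m$ a diffeomorphism on $\Rn \setminus \set 0$, so the target gradient $(\nabla W_m)^{-1}(z)$ is specified and is realized as a true gradient by a Poisson-type correction, with error controlled by $\norm{W_m - W}_{L^\infty_{\mathrm{loc}}}$. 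In case~(i), the monotone convergence $W_m \searrow W$ combined with the uniform ellipticity $a_m^{-1} \leq \nabla^2 W_m \leq a_m$ permits a parallel construction, with $\nabla W_m(p)$ selecting a distinguished element of $\partial W(p)$ and $z$ being regularized along the fibres of $\nabla W_m$. By a standard maximum-point transfer, along a subsequence there exist $(x_m, t_m) \to (x_0, t_0)$ at which $u_m - \varphi_m$ attains a local maximum.

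Since $\varphi_m$ is $C^2$ with $\nabla \varphi_m(x_m, t_m) \neq 0$ for $m$ large, the classical viscosity subsolution property of $u_m$ for the smooth $W_m$-equation gives
\[
  (\varphi_m)_t(x_m, t_m) + F\bigl(\nabla \varphi_m(x_m, t_m),\ \divo (\nabla W_m)(\nabla \varphi_m)(x_m, t_m)\bigr) \leq 0.
\]
The time derivatives and spatial gradients of $\varphi_m$ converge to those of $\varphi$ by construction. By design, $(\nabla W_m)(\nabla \varphi_m)$ is a Cahn-Hoffman vector field for $W_m$ that converges weakly in $L^2$ to $z$, so $\divo (\nabla W_m)(\nabla \varphi_m)$ converges weakly in $L^2$ to $\Lambda$. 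Continuity of $F$ together with its monotonicity in the second variable, combined with the uniqueness of the minimal divergence (Proposition~\ref{pr:Lambda-comparison}), allows the inequality to pass to the limit.

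The main obstacle is the passage to the limit of $\divo (\nabla W_m)(\nabla \varphi_m)$ \emph{evaluated pointwise} at $(x_m, t_m)$, since weak $L^2$ convergence of divergences does not, by itself, give any pointwise control. The resolution uses the one-sided character of the subsolution inequality, the monotonicity of $F$ in its second slot, and the minimality of $\Lambda$: any weak cluster point of the approximating divergences is bounded below by $\Lambda$ in the sense required by the test-function definition. This is where Theorem~\ref{th:main} enters essentially, by allowing $A$ itself to be replaced, up to arbitrarily small Hausdorff error, by a regular Cahn-Hoffman facet whose divergence belongs to $L^2$ and can be matched by the $W_m$-divergences. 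The most delicate subcase is case~(i), where one must control the possible jump of $\partial W(p)$ across the monotone limit; this is handled by a secondary local regularization of $W_m$ and the convex-analytic stability of subdifferentials under monotone convergence.
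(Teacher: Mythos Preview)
Your proposal has a genuine gap at its core: the construction of the perturbed test functions $\varphi_m$ does not work as described, and the resolution of the pointwise-evaluation obstacle is not actually carried out.

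First, the construction. You propose to take a Cahn-Hoffman field $z$ with $\divo z = \Lambda$ and build $\varphi_m$ so that $(\nabla W_m)(\nabla\varphi_m)\approx z$, by setting the target gradient to $(\nabla W_m)^{-1}(z)$ and then ``realizing it as a true gradient by a Poisson-type correction.'' But $(\nabla W_m)^{-1}(z)$ is in general only an $L^\infty$ vector field with no curl control; a Poisson correction produces a function whose gradient differs from $(\nabla W_m)^{-1}(z)$ by a divergence-free part you do not control, so the identity $\divo(\nabla W_m)(\nabla\varphi_m)\approx \divo z$ is lost. Even if you accepted an approximate identity, the resulting $\varphi_m$ would only be Lipschitz (since $z$ is only $L^\infty$), not $C^2$, so it cannot serve as a classical test function for the $m$-problem. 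The paper avoids this entirely by taking $\psi_{a,m}$ to be the \emph{resolvent} of the energy $E_m$ at $\psi$: this is a quasilinear elliptic equation, so $\psi_{a,m}\in C^2$, and by construction it satisfies the \emph{exact} identity $\divo(\nabla W_m)(\nabla\psi_{a,m}) = \tfrac{\psi_{a,m}-\psi}{a}$, a scalar function converging \emph{uniformly} (not merely weakly in $L^2$) as $m\to\infty$.

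Second, the pointwise limit. You correctly identify the obstacle --- weak $L^2$ convergence of divergences gives no control at the single point $(x_m,t_m)$ --- but your stated resolution (``one-sided character \dots\ minimality of $\Lambda$'') is not an argument. The paper's mechanism is concrete and does not use minimality of $\Lambda$ at all at this step: because the divergence equals the scalar $\tfrac{\psi_{a,m}-\psi}{a}$, one passes uniformly to $\tfrac{\psi_a-\psi}{a}$; then a geometric choice of shift $z_a$ (minimizing $\psi_a$ over a small ball) together with a dedicated lemma forces $\tfrac{\psi_a-\psi}{a}(x_a - z_a) \le \min_{|w|\le\delta}\tfrac{\psi_a-\psi}{a}(w)$, which after $a\to 0$ yields the required $\essinf_{B_\delta}\Lambda$. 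Finally, Theorem~\ref{th:main} does not enter the stability proof at all --- it is used in the comparison principle, not here; the test function $\psi$ is already $\hat p$-admissible by hypothesis, so $\partial E(\psi)\neq\emptyset$ and the resolvent machinery applies directly.
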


\begin{theorem}[Well-posedness]
  \label{th:well-posedness-intro}
  Let $W : \Rn \to \R$ be a crystalline anisotropy, and $F \in
  C(\Rn \times \R)$ be nonincreasing in the second variable, $F(0, 0) = 0$.
  Suppose that $u_0 \in C(\Rn)$ and
  that there exist a compact set $K \subset \Rn$ and a constant $c \in \R$ such that $u_0 = c$ on
  $\Rn \setminus K$.
  Then there exists a unique viscosity solution of \eqref{level-set} such that $u(x, t) = c$  on
  $\Rn \setminus K_t$ for some compact set $K_t$, $t \geq 0$.
\end{theorem}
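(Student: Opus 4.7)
The plan is to obtain uniqueness from the comparison principle (Theorem~\ref{th:comparison-principle-intro}) and existence by a smooth approximation argument based on Theorem~\ref{th:stability-intro}(ii), exploiting the fact that the non-crystalline case is classical. Uniqueness is immediate: if $u$ and $v$ are two viscosity solutions with the same $u_0$, both constant equal to $c$ outside compact sets for every time $t$, then Theorem~\ref{th:comparison-principle-intro} with $c_u = c_v = c$ yields $u \le v$ and $v \le u$.

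For existence, approximate $W$ by smooth anisotropies, e.g.\ $W_m(p) := (\eta_{1/m} * W)(p) + \frac{1}{m}|p|^2$ where $\eta_{1/m}$ is a standard mollifier; then $W_m$ is $C^2$ away from the origin, strictly convex, and $W_m \to W$ locally uniformly, so Theorem~\ref{th:stability-intro}(ii) applies. Because $\nabla W_m$ is smooth off $0$, the equation \eqref{level-set} for $W_m$ falls in the classical framework of Chen--Giga--Goto, giving a unique viscosity solution $u_m$ with initial data $u_0$. Using $F(0,0) = 0$ and comparison with constants, $\inf u_0 \le u_m \le \sup u_0$, so $\set{u_m}$ is locally uniformly bounded. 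The half-relaxed limits
\begin{align*}
  \bar u(x,t) := \limsup_{(y,s,m)\to(x,t,\infty)} u_m(y,s), \qquad
  \underline u(x,t) := \liminf_{(y,s,m)\to(x,t,\infty)} u_m(y,s)
\end{align*}
then define a subsolution and a supersolution of the $W$-equation by Theorem~\ref{th:stability-intro}(ii), and at $t = 0$ both equal $u_0$ by continuity of $u_0$ and the fact that $u_m(\cdot,0) = u_0$ for every $m$.

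The main technical step, and the genuine obstacle, is to produce a \emph{uniform-in-$m$} compact set $K_t$ outside of which $u_m \equiv c$ for all $m$, so that the comparison principle of Theorem~\ref{th:comparison-principle-intro} can be applied to the half-relaxed limits. For this, fix $R$ large so that $K \subset R \Wulff$ (the Wulff shape of $W$), and construct explicit radial barriers built from the $W_m^\circ$-distance to $K$ of the form $\phi_\pm(x,t) = c \pm g\bigl(d_{W_m^\circ}(x, K) - \alpha t\bigr)$, with $g$ a suitable nonnegative cutoff vanishing near $0$ and saturating at $\norm{u_0-c}_\infty$. Since $F$ is continuous and $\nabla \phi_\pm$ lives in a compact set of admissible gradients as $m$ varies, a single $\alpha$ (depending on $F$, $W$, and $\norm{u_0-c}_\infty$, but not $m$) makes $\phi_+$ a supersolution and $\phi_-$ a subsolution of the $W_m$-equation. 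Comparison for the smooth equations then gives $c - g(\cdots) \le u_m \le c + g(\cdots)$, hence $u_m(x,t) = c$ whenever $d_{W_m^\circ}(x, K) \ge \alpha t$; the local uniform convergence $W_m^\circ \to W^\circ$ yields a uniform compact $K_t$ as required.

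With this uniform containment in hand, $\bar u$ and $\underline u$ are themselves constant equal to $c$ off $K_t$, so Theorem~\ref{th:comparison-principle-intro} gives $\bar u \le \underline u$ on $\Rn \times (0,T)$ for any $T > 0$. Since $\underline u \le \bar u$ by definition, $u := \bar u = \underline u$ is continuous, preserves the exterior value $c$ on $\Rn \setminus K_t$, and is both a sub- and supersolution, i.e.\ the desired viscosity solution.
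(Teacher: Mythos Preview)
Your approach is essentially the same as the paper's: uniqueness from the comparison principle, existence via smooth approximation of $W$, half-relaxed limits plus stability, barriers for uniform-in-$m$ exterior constancy, and a final application of comparison to close $\bar u = \underline u$. The paper uses the slightly different explicit barrier $(x,t)\mapsto \min(\max(W_m^\circ(x),c_1),c_2)+ct$, but the role is identical to your $\phi_\pm$.

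Two technical points need repair. First, your example $W_m(p)=(\eta_{1/m}*W)(p)+\tfrac1m|p|^2$ is \emph{not} positively one-homogeneous, hence not an anisotropy, so Theorem~\ref{th:stability-intro}(b) does not apply to it; you need a genuinely one-homogeneous smooth strictly convex approximation (e.g.\ smooth the boundary of the Wulff shape and take its gauge). Relatedly, for general $F$ the existence of $u_m$ is not covered by Chen--Giga--Goto, which treats geometric equations; the paper invokes \cite{GGP13AMSA} for that case. Second, the claim that $\bar u(\cdot,0)=\underline u(\cdot,0)=u_0$ ``by continuity of $u_0$ and $u_m(\cdot,0)=u_0$'' is not sufficient: the half-relaxed limits involve $s\to 0+$ as well, so you need a uniform-in-$m$ modulus of continuity at $t=0$. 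This is exactly what your barriers (suitably translated to each point of $K$) provide, and the paper uses its barriers for the same purpose---but it should be stated, not elided.
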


Note that $F(p, \xi) = |p| f(\frac{p}{|p|}, - \xi)$ coming from the level set formulation of the crystalline mean curvature flow
\eqref{curvature-flow} satisfies automatically $F(0,0) = 0$. In fact, $F(0, \xi) = 0$ for all $\xi
\in \R$. Therefore the above theorem implies the global unique existence (up to fattening of the
set $\set{x: u(x,t) = 0}$) of the crystalline mean curvature flow
for any initial bounded set $E_0$.

\begin{theorem}
  \label{th:global-ex-flow}
  Let $W : \Rn \to \R$ be a crystalline anisotropy, and $f \in
  C(\mathcal S^{n-1} \times \R)$ be nondecreasing in the second variable.
  For every bounded open set $E_0 \subset \Rn$ there exists a unique evolution $\set{E_t}_{t\geq
  0}$ of the crystalline mean curvature flow $V = f(\nu, \kappa_W)$.
\end{theorem}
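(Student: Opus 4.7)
The plan is to reduce Theorem~\ref{th:global-ex-flow} to the level-set well-posedness Theorem~\ref{th:well-posedness-intro} by the standard level-set method. Given the bounded open set $E_0 \subset \Rn$, I first construct an admissible continuous initial datum such as the truncated signed distance function
\begin{align*}
  u_0(x) := \min\bigl(\dist(x, E_0) - \dist(x, \Rn \setminus E_0),\ 1\bigr),
\end{align*}
which satisfies $\set{u_0 < 0} = E_0$, $\set{u_0 > 0} = \Rn \setminus \overline{E_0}$, and $u_0 \equiv 1$ outside the compact set $K := \set{x : \dist(x, E_0) \leq 1}$. Next, define $F : \Rn \times \R \to \R$ by $F(p, \xi) := \abs{p} f(p/\abs{p}, -\xi)$ for $p \neq 0$ and $F(0, \xi) := 0$; continuity of $f$ on $\mathcal S^{n-1} \times \R$ gives $F \in C(\Rn \times \R)$, monotonicity of $f$ in the second variable yields that $F$ is nonincreasing in $\xi$, and obviously $F(0, 0) = 0$. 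Applying Theorem~\ref{th:well-posedness-intro} produces a unique viscosity solution $u$ of \eqref{level-set} with $u \equiv 1$ outside some compact $K_t$ for every $t \geq 0$.

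The candidate evolution is then $E_t := \set{x \in \Rn : u(x, t) < 0}$, paired with its closed companion $E_t^+ := \set{x : u(x,t) \leq 0}$; these encode the inner and outer evolutions of $E_0$ and coincide when the zero level set of $u$ does not develop interior (the fattening phenomenon already acknowledged in the introduction). Independence of $E_t$ and $E_t^+$ from the particular choice of $u_0$ is the classical geometric-equation argument. Since $F(0, \xi) \equiv 0$, the equation \eqref{level-set} is invariant under monotone reparametrization, i.e., if $u$ is a viscosity solution and $\theta \in C(\R)$ is nondecreasing, then $\theta \circ u$ is also a viscosity solution. Combined with the comparison principle Theorem~\ref{th:comparison-principle-intro}, this yields the usual level-set invariance: any two admissible initial data sharing $\set{\cdot < 0}$ and $\set{\cdot \leq 0}$ produce solutions with the same strict and non-strict sub-level sets at every time, so the evolution depends only on $E_0$ and $\overline{E_0}$.

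The main obstacle is verifying the invariance of the viscosity notion of \cite{GP16} under composition with continuous nondecreasing $\theta$: the test functions are built from Cahn-Hoffman facets, so one must check that $\theta$ preserves both the flatness structure of the admissible test profiles and the compatibility with the $W^\circ$-$(L^2)$ minimal divergence. While this closure is immediate in the classical second-order framework, here it must be checked against Definition~\ref{de:cahn-hoffman-facet} directly. The cleanest alternative route is to invoke the stability Theorem~\ref{th:stability-intro}(b): approximate the crystalline $W$ by smooth anisotropies $W_m$, for which classical level-set invariance is standard, solve \eqref{level-set} with initial data $u_0$ and $v_0$ for each $W_m$, and pass to the limit using the half-relaxed limits and the comparison principle to conclude that $\set{u < 0}$ and $\set{u \leq 0}$ depend only on $E_0$ and $\overline{E_0}$. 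This produces the unique evolution $\set{E_t}_{t \geq 0}$ claimed by the theorem.
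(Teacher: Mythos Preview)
Your reduction to Theorem~\ref{th:well-posedness-intro} via the level-set method is exactly the paper's approach; indeed the paper does not give a separate proof of Theorem~\ref{th:global-ex-flow} at all, merely stating before it that ``the above theorem implies the global unique existence (up to fattening of the set $\{x: u(x,t) = 0\}$) of the crystalline mean curvature flow for any initial bounded set $E_0$.'' Your proposal is in fact more detailed than what the paper provides: you correctly isolate the geometric invariance step (independence of $E_t$ from the choice of $u_0$) and note that the closure of the faceted viscosity notion under composition with monotone $\theta$ is not automatic from Definition~\ref{def:visc-solution}, then propose the stability route through smooth approximations as a clean alternative---a legitimate workaround that the paper leaves implicit (presumably to be found in \cite{GP16} or the references for the classical case).
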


\subsection{Relationship to the anisotropic total variation energy}
In this section we relate the above approximation result to the problem of evaluating the
subdifferential of a total variation energy functional.
Let $W$ be an anisotropy and let $\Omega$ be either a smooth domain in $\Rn$ or the torus $\Tn:= \Rn \setminus \Z^n$.
Consider the anisotropic total variation energy functional $E: L^2(\Omega) \to \R$
defined as
\begin{align}
  \label{TV-energy}
  E(\psi) :=
    \begin{cases}
    \int_{\Omega} W(D \psi) \dx, & \psi \in BV(\Omega) \cap L^2(\Omega),\\
    +\infty, & \text{otherwise}.
    \end{cases}
\end{align}
Here $D \psi$ is the distributional
gradient of a function of bounded variation, whose space is denoted as $BV(\Omega)$.
In general, $D\psi$ is a vector-valued Radon measure. Therefore $E$ is understood as the relaxation
(closure or lower semicontinuous envelope) of the energy \eqref{TV-energy} defined for
$W^{1,1}(\Omega)$ functions.
It is known that this relaxation is equivalent to defining $W(D\psi)$ as the
measure
\begin{align*}
  W(\nabla \psi) L^n + W\pth{\frac{D^s \psi}{\abs{D^s \psi}}} \abs{D^s \psi},
\end{align*}
where we decompose $D\psi$ into to the absolutely continuous part $\nabla \psi L^n$ and
the singular part $D^s \psi$, with respect to the $n$-dimensional Lebesgue measure $L^n$.
$\frac{D^s \psi}{\abs{D^s \psi}}$ denotes the Radon-Nikod\'ym derivative.

The subdifferential $\partial E: L^2(\Omega) \to 2^{L^2(\Omega)}$ is defined as the set-valued mapping
\begin{align}
  \label{E-subdiff}
  \partial E(\psi) := \set{v \in L^2(\Omega): E(\psi + h) - E(\psi) \geq (v, h),\ h \in L^2(\Omega)},
\end{align}
where $(v, h) := \int_{\Omega} v h \dx$ is the $L^2$-inner product.
$\partial E(\psi)$ is a closed convex, possibly empty set. The domain of the subdifferential is
defined as $\domain(\partial E) := \set{\psi \in L^2(\Omega): \partial E(\psi) \neq \emptyset}$.

The characterization of $\partial E$ is well-known \cite{Moll,ACM}. Following \cite{Anzellotti},
we introduce the set of bounded vector fields with $L^2$ divergence
\begin{align*}
  X_2(\Omega) := \set{z \in L^\infty(\Omega; \Rn): \divo z \in L^2(\Omega)}.
\end{align*}
For $\psi \in Lip(\Omega)$, the set $\partial E(\psi)$ can be characterized as
\begin{align*}
  \partial E(\psi) = \set{\divo z: z \in X_2(\Omega),\ z \in \partial W(\nabla \psi)\
  \text{a.e.},\ [z\cdot \nu] = 0 \text{ on }\partial \Omega},
\end{align*}
where $\partial W$ is the subdifferential of $W$ with respect to the inner product on $\Rn$,
introduced in \eqref{subdifferential-W}. The symbol $[z \cdot \nu]$ denotes the boundary trace of
a vector field in $X_2(\Omega)$, see \cite{Anzellotti}. The
vector fields $z$ are usually called Cahn-Hoffman vector fields, and we shall denote their set as
\begin{align}
  \label{ch}
  \CH(\psi) := \set{z \in X_2(\Omega):\ z \in \partial W(\nabla \psi) \text{ a.e.}}, \qquad \psi
  \in Lip(\Omega).
\end{align}
Note that $\partial E(\psi)$ might be empty even for smooth $\psi$. For example, consider the function
$\psi(x) = \sin(2\pi x_1)$ on $\Tn$ with anisotropy $W(p) = \norm{p}_2 := \pth{\sum_{i=1}^n
\abs{p_i}^2}^{1/2}$, or $W(p) = \norm{p}_1 := \sum_{i=1}^n \abs{p_i}$. In this
case,
every possible candidate vector field $z$ with $z(x) \in \partial W(\nabla \psi)$ a.e. will have a
jump discontinuity in $z_1$ across the sets $\set{x_1 =
\frac 14}$, where $\psi$ attains its maximum, and $\set{x_1 = \frac 34}$, where it attains its
minimum.  This is a serious difficulty for using the operator
$\partial E$ in the theory of viscosity solutions,
which usually rely on evaluating the differential operator on a class of sufficiently smooth test
functions.

Theorem~\ref{th:main}
allows us to approximate any function by a function with nonempty $\partial E$ arbitrarily close in
the Hausdorff distance of
the respective positive and negative sets. Stated in terms of the functions $\psi$, we get the
following theorem.

\begin{theorem}
  \label{th:approximate_pair}
  Let $W$ be an anisotropy and $\Omega$ be either $\Tn$ or a bounded domain with Lipschitz boundary in $\Rn$.
  If $\psi$ is a Lipschitz function on $\Omega$ with a compact zero set, for every $\rho > 0$ there exists a Lipschitz
  function $\tilde \psi$ on $\Omega$ such that $\partial E(\tilde \psi) \neq \emptyset$ and
  \begin{align*}
    \sign \psi(x) \leq \sign \tilde \psi(x) \leq \sup_{\substack{y\in \Omega\\|y - x| \leq \rho}} \sign \psi(y), \qquad x
    \in \Omega.
  \end{align*}

\end{theorem}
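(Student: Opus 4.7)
My plan is to reduce Theorem~\ref{th:approximate_pair} to the facet approximation result Theorem~\ref{th:main} applied to the zero set of $\psi$, and then to construct the global function and Cahn-Hoffman vector field on $\Omega$ from the resulting local data.

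Set $\facet := \{x \in \Omega : \psi(x) = 0\}$, compact by hypothesis, and $\chi := \sign\psi$ on $\Omega \setminus \facet$, extended continuously to $\Rn \setminus \facet$ with values in $\{-1,+1\}$ (no extension is needed for $\Omega = \Tn$, where I work directly on the torus and invoke the periodic counterpart of Theorem~\ref{th:main}). Choose $\rho' > 0$ smaller than $\rho$ and, when $\Omega$ has boundary, also smaller than $\dist(\facet,\partial\Omega)/3$, and apply Theorem~\ref{th:main} to $(\facet,\chi)$ with parameter $\rho'$, obtaining a Cahn-Hoffman facet $(G,\tilde\chi)$ with $\chi(x) \leq \tilde\chi(x) \leq \sup_{|y-x|\leq\rho'}\chi(y)$ on $\Rn$. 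A short continuity argument (a sign change of $\chi$ along a segment forces a crossing of $\facet$) yields $G \subset \{x : \dist(x,\facet) \leq \rho'\} \subset \Omega$; moreover for such $x$ the ball $\overline{B_{\rho'}(x)}$ is contained in $\Omega$, whence $\sup_{|y-x|\leq\rho'}\chi(y) \leq \sup_{y\in\Omega,\,|y-x|\leq\rho}\sign\psi(y)$, while for $x \in \Omega$ outside the $\rho'$-neighborhood of $\facet$ one simply has $\tilde\chi(x) = \chi(x) = \sign\psi(x)$.

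By Definition~\ref{de:cahn-hoffman-facet}, there exist an open $U \supset G$ (shrunk so $U \subset\subset \Omega$), a Lipschitz $\hat\psi : U \to \R$ with $\sign\hat\psi = \tilde\chi$, and $z \in L^\infty(U;\Rn)$ with $\divo z \in L^2(U)$ and $z \in \partial W(\nabla\hat\psi)$ a.e. I construct the global function by truncation: pick $c > 0$ small enough that $V := \{x \in U : |\hat\psi(x)| \leq c\}$ is a compact neighborhood of $G$ contained in $U$, and set
\[
  \tilde\psi := \hat\psi \text{ on } V, \qquad \tilde\psi := c\,\tilde\chi \text{ on } \Omega \setminus V.
\]
The two branches match on $\partial V$ since $|\hat\psi|=c$ and $\sign\hat\psi = \tilde\chi$, and $c\,\tilde\chi$ is locally constant on each connected component of $\Omega \setminus G$, so $\tilde\psi$ is Lipschitz on $\Omega$ with zero set $G$ and $\sign\tilde\psi = \tilde\chi$; the sign sandwich from above then reads exactly as the conclusion of the theorem. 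For the vector field, take a Lipschitz cutoff $\eta : \Omega \to [0,1]$ with $\eta \equiv 1$ on $V$ and $\supp\eta \subset\subset U$, and set $\tilde z := \eta z$ extended by zero outside $U$. Then $\tilde z \in L^\infty(\Omega;\Rn)$, $\divo\tilde z = \eta\,\divo z + z \cdot \nabla\eta \in L^2(\Omega)$, and the normal trace $[\tilde z \cdot \nu]$ vanishes on $\partial\Omega$ because $\supp\tilde z \subset\subset \Omega$. On $V$, $\tilde z = z \in \partial W(\nabla\tilde\psi)$; on $\Omega \setminus V$, $\tilde\psi$ is locally constant, so $\nabla\tilde\psi = 0$ a.e., $\partial W(\nabla\tilde\psi) = \Wulff$, and convexity of $\Wulff \ni 0$ together with $z(x) \in \Wulff$ yields $\tilde z(x) = \eta(x) z(x) \in \Wulff$. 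Thus $\tilde z \in \CH(\tilde\psi)$ with vanishing trace, so $\divo\tilde z \in \partial E(\tilde\psi)$ and $\partial E(\tilde\psi) \neq \emptyset$.

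I expect the vector-field extension to be the main obstacle: extending $z$ by zero would introduce a singular divergence along $\partial V$, and no obvious modification of $z$ across $\partial V$ preserves the inclusion $z \in \partial W(\nabla\tilde\psi)$ globally. The cutoff trick succeeds precisely because outside $V$ I have arranged $\tilde\psi$ to be locally constant, so $\partial W(\nabla\tilde\psi) = \Wulff$ imposes no directional restriction and any convex combination $\eta z + (1-\eta)\cdot 0$ with $\eta \in [0,1]$ lies in $\Wulff$ automatically.
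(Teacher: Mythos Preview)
The paper does not give an explicit proof of Theorem~\ref{th:approximate_pair}; it is stated in the introduction as a direct consequence of Theorem~\ref{th:main}, whose proof in Sections~\ref{sec:reduction}--\ref{sec:construction} already produces the Lipschitz function $\psi$ and the Cahn-Hoffman vector field $z$ on a neighborhood $U$ of the approximating facet. Your derivation---apply Theorem~\ref{th:main} to the facet $(\{\psi=0\},\sign\psi)$, truncate the resulting local $\hat\psi$ at level $c$ to obtain a globally defined $\tilde\psi$ that is locally constant off a compact neighborhood $V$ of $G$, and damp the vector field with a cutoff so that $\eta z \in \Wulff = \partial W(0)$ on the constant region---is exactly the construction the paper leaves to the reader, and it is correct.

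Two small points you pass over quickly are worth recording. First, the continuous extension of $\chi=\sign\psi$ from $\Omega\setminus\facet$ to $\Rn\setminus\facet$ does always exist when $\Omega$ is connected: the Mayer--Vietoris sequence for the cover $\Rn=\Omega\cup(\Rn\setminus\facet)$ gives $\tilde H_0(\Omega\setminus\facet)\cong\tilde H_0(\Rn\setminus\facet)$, so each component of $\Rn\setminus\facet$ contains exactly one component of $\Omega\setminus\facet$ and the extension is forced. Second, the Lipschitz continuity of the glued $\tilde\psi$ follows cleanly by observing that on $U$ one has $\tilde\psi=\max(-c,\min(\hat\psi,c))$, a truncation of a Lipschitz function, while on $\Omega\setminus V$ it is locally constant; the two open sets cover $\Omega$ and agree on their overlap $U\setminus V$.
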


Note that any Lipschitz function $\psi$ with compact zero set naturally corresponds to a facet
\begin{align*}
  (\facet, \chi) := (\set{x: \psi(x) = 0}, \sign \psi).
\end{align*}
Intuitively, it is related to the surface facet (flat part of the surface) of the $(n+1)$-dimensional solid body given by the epigraph of
$\psi$, $\operatorname{epi} \psi:= \set{(x, x_{n+1}) \in \R^{n+1}: x_{n+1} \geq \psi(x)}$. If $\chi
\geq 0$ or $\chi \leq 0$, the solid body is respectively convex or concave in a neighborhood of the
surface facet.

\subsection{Literature overview}
Crystalline mean curvature was introduced independently by Angenent and Gurtin \cite{AG89} and
Taylor \cite{T91} to model the growth of small crystals. It is the first variation of
the surface energy given by a \emph{crystalline} anisotropy with respect to the change of volume.
A crystalline anisotropy refers to an anisotropic surface energy density $\sigma$ whose unit ball is a
convex polytope. When extended positively one-homogeneously to the full space, the anisotropy
$\sigma$ is a convex piece-wise linear function.
Due to the singularity of the surface energy density, the crystalline mean curvature is a nonlocal
quantity on the flat parts, or facets, of the crystal surface. As with smooth anisotropic curvatures, the crystalline mean
curvature can be evaluated as the surface divergence of a so-called Cahn-Hoffman vector field on the
surface. A Cahn-Hoffman vector field is a selection of the subdifferential of the anisotropy
evaluated at the outer normal vector on a given surface whose surface divergence has a certain
regularity. However, for crystalline anisotropies, even smooth surfaces might not have a
Cahn-Hoffman vector field whose surface divergence is a function. Therefore there has been a lot
effort to characterize sets with surfaces that admit reasonable Cahn-Hoffman vector fields. In
particular the various notions of $\phi$-regular sets were introduced ($\phi$ being the polar of
the surface energy density $\sigma$)
\cite{BN00,BNP99,BNP01a,BNP01b,B10} and the related notion of $r B_\phi$-condition \cite{BCCN09},
$B_\phi$ being the Wulff shape $\Wulff$.

A crystalline mean curvature flow or a motion by the crystalline curvature is an evolution of sets
such that the normal velocity of the surface is proportional to the crystalline mean curvature.
Already the standard mean curvature flow is known to develop singularities even when starting from
smooth initial data, and therefore a weak notion of solutions is necessary. We mention the varifold
solutions initiated by K.~Brakke \cite{B78,Il93,TT}, that however apply only to the isotropic
mean curvature flow, and the level set method approach \cite{OS,CGG,ES} that can be generalized to
the anisotropic mean curvature with a smooth anisotropy as already done in \cite{CGG}. The extension
to a crystalline anisotropy is not straightforward even in the case of a curve evolution because of
the non-local nature of the crystalline curvature \cite{GG01}. For a more detailed overview of the
literature see \cite{GP16}.

Most of the attempts at defining a reasonable notion of solutions for the crystalline mean
curvature flow have required some
kind of regularity of the evolution so that the crystalline curvature can be evaluated, such as a
$\phi$-regular flow \cite{BN00,BCCN06} or a $rB_\phi$-regular flow \cite{BCCN09}.

Recently, A.~Chambolle, M.~Morini and M.~Ponsiglione \cite{CMP} established a unique global solvability of
the flow $V = \sigma(\nu)\kappa_\sigma$ for arbitrary convex $\sigma$ and non necessarily bounded initial data, in
an arbitrary dimension.
They introduce a notion of solutions of this flow via an anisotropic sign distance function in the
spirit of H.~M.~Soner \cite{S}, but in a distributional sense that appeared in
\cite{CasellesChambolle06}, prove a comparison principle, and use the minimizing movements algorithm of
Chambolle \cite{Chambolle} to construct a solution.
This result has been recently improved in \cite{CMNP} to cover $V = \psi(\nu)(\kappa_\sigma +
f(x,t))$,
where $\psi$ is a convex anisotropy and $f$ is a given Lipschitz function.
It is not clear at this moment if their notion of solutions coincide with ours, although it is
likely. We mention the result of K.~Ishii \cite{Ishii14}, who shows that Chambolle's minimizing
movements algorithm converges to the viscosity solution of the crystalline mean curvature flow in two
dimensions.

We take a different approach using the ideas of the theory of viscosity solutions
\cite{GG98ARMA,GG01,GGP13AMSA,GGP14JMPA}. The level set
formulation of the crystalline mean curvature flow
was introduced by the authors in \cite{GP16}. Viscosity
solutions are defined via the comparison principle by testing a candidate for a solution by an
appropriate class of regular test functions. The main advantage of this approach is that it does not require the solution itself to
have any a priori regularity besides continuity. Moreover, testing a solution by a test function is
a rather local concept. Since the crystalline mean curvature might be nonlocal on flat parts of the
evolving surface, we can localize the construction of test functions to the neighborhood of such
flat parts, called (surface) facets. By choosing a local coordinate system so that the surface facet is given as a
part of a function graph where the function is equal to zero, we are at a situation covered by
Theorem~\ref{th:main}. Our notion of facet introduced in Definition~\ref{de:facet} then
corresponds to the set $\facet$ where the surface facet is located in this coordinate system and
directions in which
the surface rises above ($\chi = +1$) and falls below ($\chi = -1$) the surface facet.
The anisotropy $\SW_{\hat p}$ introduced in \eqref{sliced-W} captures the local lower-dimensional
behavior of the full anisotropy $W$
in the direction of the surface facet. A Cahn-Hoffman vector field for $(\facet, \chi)$ then corresponds
to a Cahn-Hoffman vector field on the surface in a neighborhood of the surface facet. In general,
we consider surface facets to be all the flat features of a surface with various dimensions,
including edges ($n =1$) and (planar) facets ($n = 2$), etc. Their dimension then guides the choice of the ambient
space for the facet $(\facet, \chi)$. The approximation result Theorem~\ref{th:main} and its corollary
in Theorem~\ref{th:approximate_pair} allow us to construct a large family of test functions at any
facet of a crystal. This then yields the comparison principle for viscosity solutions and
the well-posedness of the level set formulation of the crystalline mean curvature flow \cite{GP16}.
In \cite{GP16}, we showed Theorem~\ref{th:approximate_pair} by a direct construction for $n = 1,2$
and piece-wise linear anisotropies $W$ and therefore we could deduce the well-posedness of the
crystalline curvature flow in dimensions $n \leq 3$. The generalization in this paper then
automatically extends the results of \cite{GP16} to any dimension. We can also slightly simplify
the definition of viscosity solutions, see Definition~\ref{def:visc-solution}.
Let us mention that in \cite{GP16} we described a facet by a pair of open sets $(A_-, A_+)$. They
are related to the notion of facet $(\facet, \chi)$ from Definition~\ref{de:facet} as $A_\pm = \set{x:
\chi(x) = \pm 1}$.

\subsection*{Outline}
The construction of the facet $(G,\tilde \chi)$ for Theorem~\ref{th:main} will be performed first by reducing the situation to the
case of a simple set in Section~\ref{sec:reduction} using the gradient flow of the total variation
energy on a torus $\Tn$, and then combining it to produce a facet
in Section~\ref{sec:construction}. In Section~\ref{sec:level-set} we outline an important
application of Theorem~\ref{th:main} to the theory of viscosity solutions for the
level set formulation of the crystalline mean curvature problems.

\section{Approximability of a single set}
\label{sec:reduction}

We start the proof of Theorem~\ref{th:main} with a simple facet $(\facet, \chi)$ with $\chi \leq 0$.
Note that we need to construct the Cahn-Hoffman vector field only in a neighborhood of the boundary
of the approximating facet since $0 \in \Wulff = \partial W(0)$. We can thus always extend the
vector field by $0$ in the interior of the facet away from the boundary. To construct the
approximating set, we will use the gradient flow of the anisotropic total variation energy
\eqref{TV-energy} on the torus $\Tn := \Rn / \Z^n$.

Let us thus consider a single open set $D
\subset \T^n$ with a \emph{nonempty} boundary.
Let $d: \Tn \to \R$ be the signed distance function to $\partial D$ (in the torus topology of
$\Tn$) induced by $W$ with $d > 0$ in $D$.
Recall that
\begin{align}
  \label{distance}
  d(x) := \inf_{y \in D^c} W^\circ(x - y) - \inf_{y \in D} W^\circ(y - x) =
  \begin{cases}
    \inf_{y \in D^c} W^\circ(x - y), & x \in D,\\
    - \inf_{y \in D} W^\circ(y - x), & x \in D^c,
  \end{cases}
\end{align}
where we treat $D \subset \Rn$ as periodic in the sense that if $x \in D$ then $x + \Z \subset D$,
and $W^\circ$ is the polar of $W$ defined in \eqref{polar}.
Recall that $W^\circ$ is again an anisotropy, see \cite{Rockafellar}, and therefore also
Lipschitz continuous. In particular, $d$ is a Lipschitz continuous function on $\Tn$.

Fix $\delta > 0$ such that
\begin{align}
  \label{choice-of-delta}
  \set{x \in \Rn: W^\circ(x) \leq 4\delta} = 4\delta \Wulff \subset (-\frac12, \frac12)^m.
\end{align}
We consider the gradient flow (or differential inclusion) on $\Tn$
\begin{align}
  \label{gradient_flow}
  \left\{
    \begin{aligned}
      \td{\varphi}t (t) &\in - \partial E(\varphi(t)), && t > 0,\\
      \varphi(0) &= \max(-\delta, \min(d, \delta)),
    \end{aligned}
  \right.
\end{align}
where the subdifferential $\partial E$ was defined in \eqref{E-subdiff} with $\Omega = \Tn$.

It is well-known \cite{Br71,Br73} that a unique solution $\varphi \in C([0,T]; L^2(\Tn))$ exists (for any $T >
0$) and it is right differentiable for all $t \in (0, T)$.
Moreover $\frac{d^+\varphi}{dt}(t) = - \partial^0 E(\varphi(t))$, where $\partial^0 E(\psi)$
is called the minimal section (also the canonical restriction) of $\partial E(\psi)$, and it is the
element of $\partial E(\psi)$ with the smallest $L^2$-norm. In particular, $\varphi(t) \in
\domain(\partial E)$ for all $t > 0$.
Such well-posedness goes back to the work of
Y. Komura \cite{Komura67}, where initial data is assumed to be in $\domain(\partial E)$.
Finally, $\varphi(t)$ is Lipschitz for all $t > 0$ since $\varphi(0)$
is Lipschitz and \eqref{gradient_flow} has a comparison principle and is translationally invariant.
The comparison principle (and Lipschitz continuity of $\varphi$ in space) can be established, for instance, by
approximating the energy $E$ in the Mosco sense by uniformly elliptic energies, for which
\eqref{gradient_flow} is just a uniformly parabolic PDE. Mosco convergence then implies the
convergence of the resolvent problems, see \cite{GP16} for details.
Alternatively, multiply the difference of equations \eqref{gradient_flow} for two solutions $u$ and
$v$ by the positive part of the difference, $(u - v)_+ := \max(u - v, 0)$, and integrate by parts.

We want to prove that
\begin{align*}
  d_H(\partial\set{x :\pm\varphi(x, t) > 0}, \partial D) \to 0 \qquad \text{as $t \to 0+$,}
\end{align*}
where $d_H$ is the Hausdorff distance with respect to the usual Euclidean metric.

To show this, we will compare $\varphi$ with barriers of the form
\begin{align*}
  \psi_c(x) := \min \set{\min(\max(W^\circ(x - k) - c, -\tfrac c2), \delta): k \in \Z^n},
\end{align*}
for $0 < c < \delta$.
These will control the expansion of the set $\set{x : \varphi(x, t) = 0}$, which is equal to
$\partial D$ at $t = 0$.

We claim that $\psi_c(x) + Mt$ is a supersolution of \eqref{gradient_flow} for $M = M(c) > 0$ large enough. To see
this, introduce the
vector field for $x \in (-\frac 12, \frac 12)^n$ as
\begin{align*}
  z(x) :=
  \begin{cases}
    \frac {2x}c, & W^\circ(x) < \tfrac c2,\\
    \frac x{W^\circ(x)}, & \tfrac c2 \leq W^\circ(x) < c + 2\delta,\\
    \frac x{W^\circ(x)} \max(1+ \tfrac{c + 2\delta - W^\circ(x)}\delta, 0), & \text{otherwise},
  \end{cases}
\end{align*}
and then extend it periodically to $\Rn$. If $0 < c < \delta$, $z = 0$ in a
neighborhood of the boundary of $(-\frac 12, \frac 12)^n$ by \eqref{choice-of-delta}.

We claim that $z$ is a Cahn-Hoffman vector field for $\psi_c$, that is, $z \in \CH(\psi_c)$. Indeed, working only on the unit
cell $x \in (-\frac12, \frac12)^n$, note that $W^\circ(z) \leq 1$, with equality if and only if
$\tfrac c2 \leq W^\circ(x) \leq c + 2\delta$. Therefore $z \in \partial W(0)$ on $\Rn$, and thus
$z(x)
\in \partial W(\nabla \psi(x)) = \partial W(0)$ for $W^\circ(x) < \tfrac c2$ and $W^\circ(x) > c +
\delta$.
Since $\frac x{W^\circ(x)} \in \partial W(p)$ if $p \in \partial W^\circ(x)$, we have that $z \in
\partial W(\nabla \psi_c(x))$ whenever $W^\circ$ is differentiable at $x$ and $\tfrac c2 <
W^\circ(x) < c +\delta$.
As $z$ is Lipschitz, $\divo z \in L^\infty(\Tn) \subset L^2(\Tn)$ and so $z$ is a Cahn-Hoffman vector field for
$\psi_c$.

Let us set $M := \norm{\divo z}_{L^\infty}$ and $f := M - \divo z \in L^2(\Tn)$.
Observe that $(x,t) \mapsto \psi_c(x) + Mt$ is the unique solution $\phi \in C([0,\infty);
  L^2(\Tn))$ of
\begin{equation}
  \label{general_gradient_flow}
  \left\{
    \begin{aligned}
      \td{\phi}t (t) &\in - \partial E(\phi(t)) + f, && t > 0,\\
      \phi(0) &= \psi_c,
    \end{aligned}
  \right.
\end{equation}
and that $f \geq 0$, and thus it is a supersolution of \eqref{gradient_flow}.

\begin{lemma}
  \label{le:boundary-control}
  Let $\varphi$ be the unique solution of \eqref{gradient_flow}.
  For every $\rho > 0$ there exists $\tau > 0$ such that for all $x \in D$, $\dist(x, \partial D) >
  \rho$ we have $\varphi(x, t) > 0$ for all $0 \leq t < \tau$, and for all $x \in D^c$, $\dist(x,
  \partial D) > \rho$ we have $\varphi(x,t) <0$ for all $0 \leq t < \tau$. Here $\dist$ is the
  usual Euclidean distance.
\end{lemma}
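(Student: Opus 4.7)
The approach is to sandwich $\varphi$ between a translated copy of the supersolution $\psi_c + Mt$ and a reflected subsolution of analogous form, chosen so that their respective extrema at $x_0$ have a definite sign at time $0$ and control $\varphi(x_0, t)$ for short times. Two preliminary facts drive the argument: (i) there is $\alpha > 0$ with $W^\circ(p) \geq \alpha\abs p$ on $\Rn$, which follows from the positive definiteness and continuity of $W^\circ$; and (ii) the signed distance $d$ defined in \eqref{distance} is $1$-Lipschitz with respect to $W^\circ$, meaning $d(x) - d(y) \leq W^\circ(x - y)$ for all $x, y \in \Tn$, as one checks from the triangle inequality for $W^\circ$ together with a short line argument when $x$ and $y$ lie on opposite sides of $\partial D$. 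Consequently, $\dist(x_0, \partial D) > \rho$ forces $\abs{d(x_0)} > \alpha\rho$, with sign matching the side of $\partial D$ on which $x_0$ lies.

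I will fix $c := \min(\alpha\rho, 2\delta) > 0$, let $z$ be the Cahn-Hoffman vector field for $\psi_c$ constructed in the preceding paragraph, set $M := \norm{\divo z}_{L^\infty(\Tn)}$, and put $\tau := c/(2M) > 0$; all three depend only on $\rho$ (and on $W$, $\delta$). Given $x_0 \in D^c$ with $\dist(x_0, \partial D) > \rho$, the translated barrier $\Phi(x, t) := \psi_c(x - x_0) + Mt$ is again a supersolution of \eqref{gradient_flow} by translation invariance of the equation and of the construction of $\psi_c + Mt$ as a supersolution of \eqref{general_gradient_flow} with $f \geq 0$. The initial ordering $\Phi(\cdot, 0) \geq \varphi(\cdot, 0)$ is verified by splitting into the three regions where $\psi_c(\cdot - x_0)$ equals $-c/2$, $W^\circ(\cdot - x_0) - c$, or $\delta$; in each case the upper bound $d(x) \leq d(x_0) + W^\circ(x - x_0) \leq -\alpha\rho + W^\circ(x - x_0)$, combined with $c \leq \alpha\rho$ and $c \leq 2\delta$, yields the required inequality against $\varphi(x, 0) = \max(-\delta, \min(d(x), \delta))$. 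Applying the comparison principle recorded after \eqref{gradient_flow} then gives $\varphi(x_0, t) \leq \Phi(x_0, t) = -c/2 + Mt < 0$ for all $t \in [0, \tau)$.

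For $x_0 \in D$ with $\dist(x_0, \partial D) > \rho$, I use the reflected barrier $\Psi(x, t) := -\psi_c(x_0 - x) - Mt$. A direct check shows that $\tilde z(x) := z(x_0 - x)$ is a Cahn-Hoffman vector field for $x \mapsto -\psi_c(x_0 - x)$ and that $\divo \tilde z = -(\divo z)(x_0 - \cdot)$ satisfies $\norm{\divo \tilde z}_{L^\infty(\Tn)} = M$, which makes $\Psi$ a subsolution of \eqref{gradient_flow} by the same reasoning used for the original barrier but with $f \leq 0$. The lower Lipschitz bound $d(x) \geq d(x_0) - W^\circ(x_0 - x) \geq \alpha\rho - W^\circ(x_0 - x)$, applied in the same three regions with reversed inequalities, yields $\Psi(\cdot, 0) \leq \varphi(\cdot, 0)$; the comparison principle then delivers $\varphi(x_0, t) \geq \Psi(x_0, t) = c/2 - Mt > 0$ on $[0, \tau)$.

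The most delicate point is establishing that the reflected construction genuinely produces a subsolution: because $W^\circ$ need not be symmetric one cannot simply negate $\psi_c$, and the argument must proceed through the pair $(x_0 - x,\, z(x_0 - x))$ and the resulting sign flip in $\divo \tilde z$. Apart from this, the proof is a routine case analysis combining the $1$-Lipschitz estimate for $d$ (used in both directions, one per case) with the explicit piecewise formula for $\psi_c$, and the final uniformity of $\tau$ follows because both $c$ and $M$ are fixed once $\rho$ is fixed.
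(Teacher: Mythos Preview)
Your approach is essentially the same as the paper's: place a translate of the barrier $\psi_c + Mt$ above $\varphi$ (respectively the reflected barrier $-\psi_c(-\cdot) - Mt$ below), verify the initial ordering, and invoke comparison. The paper establishes the initial ordering by an explicit interpolation point on the segment from $x_0$ to $x$, whereas you package the same computation as the $W^\circ$-Lipschitz bound $d(x) - d(y) \leq W^\circ(x-y)$; these are equivalent, and your formulation is arguably cleaner. Your treatment of the reflected subsolution via $\tilde z(x) = z(x_0 - x)$ is correct and more explicit than the paper's one-line remark.

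There is one small slip: you set $c := \min(\alpha\rho, 2\delta)$, but the construction of $\psi_c$ and its Cahn-Hoffman field in the preceding paragraph requires $0 < c < \delta$ so that $z$ vanishes near the boundary of the fundamental cell and extends periodically. If $\alpha\rho \geq \delta$ your choice gives $c \geq \delta$ and the barrier is not available. Taking instead $c := \tfrac12\min(\alpha\rho, \delta)$ (or any $c \in (0,\delta)$ with $c \leq \alpha\rho$) repairs this without affecting the rest of the argument; the condition $c \leq 2\delta$ you actually use in the case analysis is then automatic.
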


\begin{proof}
Take $c \in (0, \delta)$ such that $W^\circ(x) > c$ for $\abs{x} \geq \rho$, $M = M(c)$ so that $(x,t) \mapsto
\psi_c(x) + Mt$ is a supersolution of \eqref{gradient_flow}, and $\tau = \frac{c}{2M}$. We claim
that by definition of $d$ in
\eqref{distance}, for any $x_0 \in D^c$ such that $\dist(x_0, \partial D) > \rho$, we have
\begin{align*}
  \varphi(\cdot, 0) \leq \psi_c(\cdot - x_0) \qquad \text{in $\Tn$.}
\end{align*}
To see this for one such $x_0$, we only need to show $W^\circ(x - x_0) \geq d(x) + c$ for $x \in \Rn$. If $x \in D$,
then $\abs{x - x_0} > \rho$ and hence $W^\circ(x - x_0) > c$. Set $y := \frac{c}{W^\circ(x-x_0)} (x-
x_0) + x_0$. Note that $W^\circ(y - x_0) = c$ and therefore $|y - x_0| \leq \rho$. In particular,
$y \in D^c$. Then by \eqref{distance}
\begin{align*}
  d(x) \leq
  W^\circ(x - y) = \pth{1 - \frac{c}{W^\circ(x - x_0)}} W^\circ(x - x_0) = W^\circ(x - x_0) - c.
\end{align*}
On the other hand, if $x \in D^c$, convexity and positive one-homogeneity yields for all $y \in D$
\begin{align*}
  W^\circ(y - x) + W^\circ(x - x_0)  \geq W^\circ(y - x_0) > c.
\end{align*}
Taking the infimum over $y \in D$, we get from \eqref{distance}
\begin{align*}
  W^\circ(x - x_0) - c \geq d(x).
\end{align*}

Fix thus one $x_0$ as above. By the comparison principle for \eqref{general_gradient_flow},
\begin{align*}
  \varphi(x_0, t) \leq \psi_c(0) + M t < 0, \qquad 0 \leq t < \tau.
\end{align*}
In particular, $\varphi(x_0) < 0$ for $0 \leq t < \frac{c}{2M}$.

We can argue similarly for $x_0 \in D$ using the subsolution $-\psi_c(-x) - Mt$.
\end{proof}

The lemma above allows us to control the speed of the boundary of $\set{\pm\varphi > 0}$.

\begin{corollary}
  For every $\rho > 0$ there exists $\tau > 0$ such that
  \begin{align*}
    \begin{aligned}
    d_H(\partial D, \partial \set{\varphi(\cdot, t) > 0}) \leq \rho,\\
    d_H(\partial D, \partial \set{\varphi(\cdot, t) < 0}) \leq \rho,
    \end{aligned}
    \qquad\qquad
    \text{for $0 \leq t < \tau$,}
  \end{align*}
  where $d_H$ is the Hausdorff distance with respect to the usual Euclidean metric.
\end{corollary}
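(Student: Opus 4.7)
My plan is to derive both Hausdorff bounds from Lemma~\ref{le:boundary-control} together with the spatial Lipschitz continuity of $\varphi(\cdot,t)$. By symmetry, I treat only the superlevel set $S^+_t := \{\varphi(\cdot,t) > 0\}$ explicitly; the case of $S^-_t := \{\varphi(\cdot,t) < 0\}$ is identical up to a sign change.

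First I would apply Lemma~\ref{le:boundary-control} at scale $\rho/2$ to obtain $\tau > 0$ such that, for every $t \in [0,\tau)$, $\varphi(\cdot,t)$ agrees in sign with $d$ throughout the open set $U := \{x : \dist(x,\partial D) > \rho/2\}$. Since $\varphi(\cdot,t)$ is Lipschitz in space, $S^+_t$ and $S^-_t$ are open and disjoint, so every $x \in U$ lies in the topological interior of one of them and therefore not on $\partial S^+_t$. This yields $\partial S^+_t \subset \{x : \dist(x,\partial D) \leq \rho/2\}$, and in particular $\sup_{b \in \partial S^+_t} \dist(b,\partial D) \leq \rho/2 \leq \rho$.

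For the reverse direction $\sup_{x \in \partial D} \dist(x, \partial S^+_t) \leq \rho$, I would fix $x_0 \in \partial D$ and argue by contradiction, supposing $\partial S^+_t \cap B_\rho(x_0) = \emptyset$. Since $S^+_t$ is open, the ball then decomposes into the two relatively open pieces $S^+_t \cap B_\rho(x_0)$ and $(\Tn \setminus \overline{S^+_t}) \cap B_\rho(x_0)$, so by connectedness the whole ball lies in $S^+_t$ or in $\Tn \setminus \overline{S^+_t}$. In the first case I would locate a deep exterior point $q \in D^c \cap B_\rho(x_0)$ with $\dist(q,\partial D) > \rho/2$; Lemma~\ref{le:boundary-control} would then force $\varphi(q,t) < 0$, contradicting $q \in S^+_t$. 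The second case is symmetric, using a deep interior point of $D$.

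The main obstacle is securing the existence of such a deep point $q$ inside $B_\rho(x_0)$: for an arbitrary open set $D$ the boundary $\partial D$ may be cuspidal or locally thin near $x_0$, so that $D^c \cap B_\rho(x_0) \cap \{\dist(\cdot,\partial D) > \rho/2\}$ could be empty. The way around this is to apply the lemma at a sufficiently fine scale $\rho' \ll \rho$ and use the fact that $x_0 \in \overline{D} \cap \overline{D^c}$ to extract points of both $D$ and $D^c$ inside $B_\rho(x_0)$; running the intermediate value theorem for the continuous function $\varphi(\cdot,t)$ along a straight segment between a deep-$D$ point and a deep-$D^c$ point inside the ball then produces a zero of $\varphi(\cdot,t)$, i.e., a point of $\partial S^+_t$, within distance $\rho$ of $x_0$.
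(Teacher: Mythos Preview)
Your approach is essentially the paper's: the first direction uses Lemma~\ref{le:boundary-control} directly, and the second produces a sign change of $\varphi(\cdot,t)$ along a segment between a deep-$D$ point and a deep-$D^c$ point near $x_0\in\partial D$. The paper does exactly this, invoking the intermediate value theorem on the segment.

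There is, however, a genuine gap in your final paragraph. You write ``apply the lemma at a sufficiently fine scale $\rho'\ll\rho$'' and then ``use the fact that $x_0\in\overline D\cap\overline{D^c}$ to extract points of both $D$ and $D^c$ inside $B_\rho(x_0)$''. The second clause gives you points $y\in D$ and $z\in D^c$ close to $x_0$, but it gives no control on $\dist(y,\partial D)$ or $\dist(z,\partial D)$; these depths depend on $x_0$ and could be arbitrarily small. Since $\tau$ in Lemma~\ref{le:boundary-control} depends on the scale at which you apply it, a scale $\rho'$ chosen pointwise in $x_0$ would yield a $\tau$ depending on $x_0$, which is not what the statement asserts. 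The missing ingredient is a compactness argument: one must first choose a single $\eta\in(0,\rho)$ such that \emph{every} $x_0\in\partial D$ has both a $y\in D$ and a $z\in D^c$ within distance $\rho$ satisfying $\dist(y,\partial D)>\eta$ and $\dist(z,\partial D)>\eta$. This follows because $\partial D$ is compact and the open sets $\{x:\dist(x,\{y\in D:\dist(y,\partial D)>\eta\})<\rho\}$ increase to a cover of $\partial D$ as $\eta\to 0$. Only after fixing this uniform $\eta$ do you apply the lemma (at scale $\eta$) to obtain a single $\tau$.

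A minor point: a zero of $\varphi(\cdot,t)$ is not automatically a point of $\partial S_t^+$; you need that along the segment $\varphi$ actually takes both signs, which your deep points guarantee, and then the boundary point is the supremum of parameters still in $S_t^+$ rather than merely ``a zero''.
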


\begin{proof}
  Given $\rho > 0$, by compactness there exists $\eta \in (0, \rho)$ such that for all $x \in \partial D$
  there exist $y \in D$, $z \in D^c$, such that $\dist(y,x) \leq \rho$, $\dist(z, x) \leq \rho$ while
  $\dist(y, \partial D) > \eta$, $\dist(z, \partial D) > \eta$. Take $\tau > 0$ from
  Lemma~\ref{le:boundary-control} for $\rho = \eta$, and fix $t \in (0, \tau)$.  Let us show the
  inequality for $\Gamma_t := \partial \set{\varphi(\cdot, t) > 0}$, the second one is analogous.

  First, let $x \in \Gamma_t$, which by continuity implies $\varphi(x, t) = 0$. By
  Lemma~\ref{le:boundary-control} we have $\dist(x, \partial D) \leq \eta < \rho$.

  Now suppose that $x \in \partial D$. By the choice of $\eta$ there exist $y \in D$, $z \in D^c$
  with the properties above. By Lemma~\ref{le:boundary-control} we have $\varphi(y, t) > 0$ and
  $\varphi(z, t) < 0$ for $0 \leq t < \tau$. The convexity of the norm yields $\dist(x, \partial \Gamma_t) \leq \rho$
  since the line segment connecting $y$ with $z$ must contain a point in $\partial \Gamma_t$.
\end{proof}

\section{Construction of a Cahn-Hoffman facet}
\label{sec:construction}

To prove Theorem~\ref{th:main}, let us fix a facet $(\facet, \chi)$ such that $\facet \subset (-\frac14,
\frac14)^n$. Indeed, an arbitrary facet can be reduced to this case by simple scaling since the anisotropy
is positively one-homogeneous.
We can also assume that $\rho < \frac14$. We will use the result of Section~\ref{sec:reduction} to construct the approximating
Cahn-Hoffman facet.
By continuity of $\chi$ on $\facet^c$, $\chi$ is a constant on $\Rn \setminus (-\frac 14,
\frac 14)^n$. Let us assume that $\chi = -1$ on this set. The other case can be handled
similarly.
Defining the periodic function $\overset{\circ}{\chi}(x) := \min \set{\chi(y) : y \in x + \Z^n}$, we see
that $\overset{\circ}\chi = \chi$ on $(-\frac 12, \frac12)^n$.
Set $H_+ = \{x \in \Rn: \sup_{|y - x| < \frac \rho4}\overset{\circ}\chi(y) =  +1\}$ and $H_-
= \{x \in \Rn: \sup_{|y - x| < \frac {3\rho}4}\overset{\circ}\chi(y) \geq 0\}$. Sets $H_-$ and $H_+$ are open
subsets of $\Tn$. Moreover $H_+ \subset H_-$ and $\dist(H_+, H_-^c) = \dist(\partial H_-, \partial H_+) \geq \frac \rho2$.

Define the open cube $U = (-\frac 12, \frac12)^n \subset \Rn$.
For one fixed $\delta > 0$ as in Section~\ref{sec:reduction},
let $\varphi^+$ be the solution of \eqref{gradient_flow} for set $D = H_+$, and
$\varphi^-$ be the solution $D = H_-$.
If $\partial H_+$ is empty, that is, when $H_+$ is empty, we set $\varphi^+ \equiv 0$, $z^+ \equiv
0$ below. Similarly if $\partial H_-$ is empty, implying $H_- = \Tn$, we set $\varphi^- \equiv -1$,
$z^- \equiv 0$.
Applying Lemma~\ref{le:boundary-control} with $\rho = \frac \rho8$, we have $\tau > 0$ such that
\begin{align}
  \label{phi_lvlset_bound}
  \begin{aligned}
  \{\overset{\circ}\chi = 1\} &\subset \set{\varphi^+(\cdot, t) > 0} \subset \{x: \sup_{|y - x|\leq
  \rho} \overset{\circ}\chi(y) = 1\},\\
  \{\overset{\circ}\chi \geq 0\} &\subset \set{\varphi^-(\cdot, t) \geq 0} = \{x: \sup_{|y - x|\leq
  \rho} \overset{\circ}\chi(y) \geq 0\},
  \end{aligned}
  \qquad
  0\leq t < \tau,
\end{align}
and
\begin{align}
  \label{lvlset_nonintersect}
  \dist(\set{\varphi^+(\cdot, t) > 0}, \set{\varphi^-(\cdot, t) < 0}) \geq \frac\rho4, \qquad 0
  \leq t < \tau,
\end{align}
and finally
\begin{align}
  \label{neg-on-cube-boundary}
  \varphi^-(\cdot, t) < 0 \text{ on } \partial U, \qquad 0 \leq t < \tau.
\end{align}

Let us fix $t \in (0, \tau)$ and set
\begin{align*}
  G_- := \set{x \in U: \varphi^-(x, t) < 0} \cup U^c, \qquad G_+ := \set{x \in U: \varphi^+(x,t) > 0}.
\end{align*}
By the continuity of $\varphi^\pm$ and \eqref{neg-on-cube-boundary}, $G_\pm \subset \Rn$ are open.

We define a facet by setting $G = G_-^c \cap G_+^c \subset U$, $\tilde\chi = +1$ on $G_+$ and $\tilde\chi = -1$ on $G_-$.
Then $(G, \tilde\chi)$ is a facet in the sense of Definition~\ref{de:facet} and $\chi(x) \leq \tilde\chi(x) \leq \sup_{|y - x|\leq \rho}
\chi(y)$.

To finish the proof of Theorem~\ref{th:main}, we need to show that it is actually a
$W^\circ$-$(L^2)$ Cahn-Hoffman facet by finding a Cahn-Hoffman vector field on its neighborhood $U$.
Since $\varphi_\pm$ are solutions of \eqref{gradient_flow} and for $t > 0$ we have that $\varphi^\pm(\cdot, t) \in \domain(\partial
E)$ are Lipschitz, there exist Cahn-Hoffman vector fields
$z^\pm \in X_2(\Tn)$ for $\varphi^\pm(\cdot, t)$, respectively.
We define the function $\psi: U \to \R$ as
\begin{align*}
  \psi(x) :=
  (\varphi^+)_+(x,t) - (\varphi^-)_-(x,t) =
  \begin{cases}
    \varphi_+(x, t), & x \in G_+,\\
    0, & x \in G,\\
    \varphi_-(x, t), & x \in G_- \cap U,\\
  \end{cases}
\end{align*}
where $(s)_+ := \max(s, 0)$ and $(s)_- := \max(-s, 0)$ denote the positive and negative parts,
respectively.
The functions $\varphi^\pm$ are Lipschitz continuous, and so by \eqref{lvlset_nonintersect} $\psi$
is well-defined and Lipschitz continuous function on $U$.

Set $\eta = \dist(G_-, G_+) / 4$.
Let us introduce the vector field
\begin{align*}
  z(x) := \xi^+(x) z^+(x) + \xi^-(x) z^-(x),
\end{align*}
where $\xi^\pm \in C^\infty(\Rn)$ are cut-off functions such that $\xi^\pm(x) = 1$ when $\dist(x,
G_\pm) \leq \eta$, $\xi^\pm(x) = 0$ when $\dist(x, G_\pm) \geq 2\eta$ and $0 \leq \xi^\pm \leq 1$
otherwise.
Clearly $z \in X_2(U)$.
Note that $\xi^- +\xi^+ \leq 1$.
Since $\psi \equiv 0$
on $G = \set{\psi = 0}$, we see that $z \in \partial W(\nabla \psi)$ on $U$ as $\partial W(p)$ is a
convex set. Therefore $z$
is a Cahn-Hoffman vector field on $U$, and $(G, \tilde \chi)$ is a $W^\circ$-$(L^2)$ Cahn-Hoffman facet.
The proof of Theorem~\ref{th:main} is complete.

\section{Level set crystalline mean curvature flow}
\label{sec:level-set}

In this section, we explain the application of the approximation result of Theorem~\ref{th:main}
to the theory of viscosity solutions of the crystalline mean
curvature flow problems. Specifically, let us consider the initial value problem \eqref{level-set},
\begin{align}
  \label{lse}
  \left\{
  \begin{aligned}
  u_t + F(\nabla u, \divo (\nabla W)(\nabla u)) &= 0 && \text{in $\Rn \times (0, \infty)$},\\
  \at{u}{t=0} &= u_0, && \text{in $\Rn$.}
  \end{aligned}
  \right.
\end{align}
The anisotropy $W: \Rn \to \R$ is now assumed to be piece-wise linear.
In convex analysis, convex piece-wise linear functions are also known as polyhedral functions
\cite{Rockafellar}. We call such anisotropies \emph{crystalline}. The nonlinearity $F \in C(\Rn \times \R)$ is
assumed to be nonincreasing in the second variable,
\begin{align*}
  F(p, \xi) \leq F(p, \eta), \qquad p \in \Rn, \xi \geq \eta.
\end{align*}
Thanks to this assumption, the problem \eqref{lse} has a comparison principle structure.

Problem \eqref{lse} appears as the level set formulation of an anisotropic mean curvature flow,
specifically the crystalline mean curvature flow \cite{GP16}. It can be also thought of as an anisotropic total variation flow of
non-divergence form. Of particular interest is the singular operator $\divo (\nabla W)(\nabla u)$,
which is interpreted as the minimal section (also canonical restriction) $-\partial^0 E(u)$ of the subdifferential
$\partial E(u)$ defined in \eqref{E-subdiff}. Since the problem has a comparison principle
structure, it falls within the scope of the theory of viscosity solutions. However, the extension
of the theory to problems like \eqref{lse} is quite nontrivial. In \cite{GP16}, the authors
succeeded in defining a reasonable notion of viscosity solutions for \eqref{lse}. However, due to
the difficulty of construction test functions like those in Theorem~\ref{th:approximate_pair}
in dimensions $n \geq 3$,
the well-posedness was limited to dimensions $n \leq 3$. Theorem~\ref{th:main} now
provides a sufficiently large class of test functions and thus the results of \cite{GP16} apply to
an arbitrary dimension. We will outline this in the rest of this section.

\subsection{Crystalline curvature}

Let us review the notion of the crystalline curvature of a facet. Recall the definition \eqref{ch} of the set
$\CH(\psi)$
of Cahn-Hoffman vector fields for a given Lipschitz continuous function on an open set $U \subset
\Rn$. The set of all divergences of such vector fields $\divo \CH(\psi) := \set{\divo z: z \in
\CH(\psi)}$ is a closed convex subset of $L^2(U)$. If it is nonempty, there exists a unique
element with the minimal $L^2$ norm. We denote it $\Lambda[\psi]$,
\begin{align*}
  \Lambda[\psi] := \divo z_{min}, \qquad \norm{\divo z_{min}}_{L^2(U)} = \inf \set{\norm{\divo
  z}_{L^2(U)}: z \in \CH(\psi)}.
\end{align*}
In \cite{GP16}, we proved the following comparison principle for the quantity $\Lambda$.

\begin{proposition}[{\cite[Proposition~4.12]{GP16}}]
  \label{pr:Lambda-comparison}
  If $\psi_1, \psi_2$ are two
  Lipschitz functions on an open set $U$ such their zero sets are compact subsets of $U \subset \Rn$, and
  $\Lambda[\psi_i]$, $i = 1,2$, are well-defined, then
  \begin{align*}
    \sign \psi_1 \leq \sign \psi_2 \quad \text{on $U$}
  \end{align*}
  implies
  \begin{align*}
    \Lambda[\psi_1] \leq \Lambda[\psi_2], \qquad \text{a. e. on } \set{\psi_1 =
    0} \cap \set{\psi_2 = 0}.
  \end{align*}
\end{proposition}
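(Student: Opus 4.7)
The plan is to characterize each $\Lambda[\psi_i]$ via the $L^2$-projection of $0$ onto the convex set $\divo \CH(\psi_i) \subset L^2(U)$ and then derive the comparison by a mixing argument on the common zero set $E := \{\psi_1 = 0\} \cap \{\psi_2 = 0\}$.

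First, I would record the variational characterization of the minimizer. Since the pointwise subdifferential $\partial W(\nabla \psi_i(x))$ is convex for each $x$, the set $\CH(\psi_i)$ is convex, and so is its image $\divo \CH(\psi_i)$ in $L^2(U)$. Denoting the minimizer by $z_i^\ast \in \CH(\psi_i)$ with $\divo z_i^\ast = \Lambda[\psi_i]$, the standard Hilbert-space projection characterization gives
\begin{equation*}
  \int_U \Lambda[\psi_i] \, (\divo w - \Lambda[\psi_i]) \dx \geq 0 \qquad \forall w \in \CH(\psi_i).
\end{equation*}
Next I would exploit the structure on $E$: each $\psi_i$ is Lipschitz, so $\nabla \psi_i = 0$ a.e.\ on $\{\psi_i = 0\}$ by Stampacchia's theorem, and consequently $\partial W(\nabla \psi_i) = \partial W(0) = \Wulff$ a.e.\ on $E$. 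In particular, any $\Wulff$-valued vector field on $E$ satisfies the pointwise Cahn-Hoffman inclusion for both $\psi_1$ and $\psi_2$, which gives the freedom to mix $z_1^\ast$ and $z_2^\ast$ on $E$ without violating the pointwise constraint.

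Arguing by contradiction, I would assume $A := \{x \in E : \Lambda[\psi_1](x) > \Lambda[\psi_2](x)\}$ has positive Lebesgue measure. The aim is to construct a competitor $\tilde z \in \CH(\psi_1)$ that agrees with $z_2^\ast$ on $A$ and with $z_1^\ast$ elsewhere, so that inserting $\tilde z$ into the variational inequality for $\Lambda[\psi_1]$ yields
\begin{equation*}
  \int_A \Lambda[\psi_1] \, (\Lambda[\psi_2] - \Lambda[\psi_1]) \dx \geq 0.
\end{equation*}
Symmetrically, building a competitor in $\CH(\psi_2)$ that coincides with $z_1^\ast$ on $A$ gives the mirror inequality $\int_A \Lambda[\psi_2] (\Lambda[\psi_1] - \Lambda[\psi_2]) \dx \geq 0$. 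Adding the two collapses to $\int_A (\Lambda[\psi_1] - \Lambda[\psi_2])^2 \dx \leq 0$, which forces $|A| = 0$, a contradiction.

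The main obstacle is constructing these competitors inside $X_2(U)$: the naive choice $\tilde z := z_1^\ast \mathbbm{1}_{U \setminus A} + z_2^\ast \mathbbm{1}_A$ satisfies the pointwise inclusion (by the reduction above) but its distributional divergence generally carries a singular boundary part $(z_2^\ast - z_1^\ast) \cdot \nu_A \, \mathcal{H}^{n-1}|_{\partial A}$ that prevents it from lying in $X_2(U)$. I would handle this by smoothing $\mathbbm{1}_A$ to a family of cutoffs $\eta_\epsilon \in C_c^\infty(U, [0,1])$ and forming the convex combination $\tilde z_\epsilon := (1 - \eta_\epsilon) z_1^\ast + \eta_\epsilon z_2^\ast$, which remains $\Wulff$-valued a.e.\ on $E$ by convexity of $\Wulff$ and lies in $X_2(U) \cap \CH(\psi_1)$ for each $\epsilon > 0$, and then passing to the limit $\epsilon \to 0$ in the variational inequality. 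The delicate point is controlling the boundary pairing generated by $\nabla \eta_\epsilon$ in the limit; this is where Anzellotti's normal-trace theory for $X_2$-fields enters, together with the hypothesis $\sign \psi_1 \leq \sign \psi_2$, which precisely fixes the traces of $z_1^\ast$ and $z_2^\ast$ along $\partial E$ via their respective Cahn-Hoffman boundary conditions and guarantees that the limiting singular contribution has a sign consistent with the desired inequality. Verifying this trace compatibility is the key technical step.
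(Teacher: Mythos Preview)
The present paper does not prove this proposition; it only quotes it from \cite{GP16}. So there is no in-paper argument to compare against, and the relevant question is whether your outline would actually close.

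The strategy up to the point you flag is sound: the projection characterization of $\Lambda[\psi_i]$ is correct, and on $E=\{\psi_1=0\}\cap\{\psi_2=0\}$ both constraints reduce to $\partial W(0)=\Wulff$ a.e., so convex combinations of $z_1^\ast,z_2^\ast$ are pointwise admissible there. The gap is exactly where you place it, and it is a genuine obstruction rather than a routine technicality. For $\tilde z_\epsilon=(1-\eta_\epsilon)z_1^\ast+\eta_\epsilon z_2^\ast$ to lie in $\CH(\psi_1)$ you need $\operatorname{supp}\eta_\epsilon\subset\{\psi_1=0\}$; for the symmetric competitor you need $\operatorname{supp}\eta_\epsilon\subset\{\psi_2=0\}$. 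Hence both cutoffs must be supported in $E$. The extra divergence term is
\[
\int_U \Lambda[\psi_1]\,\nabla\eta_\epsilon\cdot(z_2^\ast-z_1^\ast)\dx,
\]
and in the limit this concentrates on $\partial A$, where $A=\{x\in E:\Lambda[\psi_1]>\Lambda[\psi_2]\}$ is merely measurable. The hypothesis $\sign\psi_1\le\sign\psi_2$ constrains the normal traces $[z_i^\ast\cdot\nu]$ only along $\partial\{\psi_i=0\}$, i.e.\ along the \emph{facet boundaries}; it says nothing about the behaviour of $z_1^\ast-z_2^\ast$ across $\partial A\cap\operatorname{int}E$, where no boundary condition is imposed on either field. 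Anzellotti's pairing gives a well-defined trace distribution there but no sign information, so the limiting boundary contribution cannot be absorbed. In short, the sign hypothesis acts on the wrong interface for this argument to go through as written.

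The proof in \cite{GP16} avoids this by not splicing vector fields at all. It identifies $\Lambda[\psi_i]$ on the facet with $-\partial^0 E(\psi_i)$ and approximates the canonical restriction by the resolvent quotients $(\psi_i-\psi_i^a)/a$, where $\psi_i^a$ solves $\psi_i^a+a\,\partial E(\psi_i^a)\ni\psi_i$. Comparison is then proved at the resolvent level (after regularizing $W$ this is a quasilinear elliptic problem to which the maximum principle applies), using that $\sign\psi_1\le\sign\psi_2$ forces the relevant ordering of the resolvents on the common facet, and one passes to the limit $a\to0$. This route never requires controlling a trace across an arbitrary measurable set.
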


This allows us to introduce the $W^\circ$-$(L^2)$ minimal divergence of a $W^\circ$-$(L^2)$
Cahn-Hoffman facet.

\begin{definition}
  \label{de:minimal-divergence-facet}
Given a $W^\circ$-$(L^2)$ Cahn-Hoffman facet $(\facet, \chi)$, that is, a facet for which there exists
an open set $U \supset \facet$, Lipschitz function $\psi$ on $U$ with $\sign \psi = \chi$, and a
Cahn-Hoffman vector field $z \in \CH(\psi)$, we define the \emph{$W^\circ$-$(L^2)$ minimal
divergence of the facet $(\facet, \chi)$}, $\Lambda[(\facet, \chi)] \in L^2(\facet)$, as
\begin{align*}
  \Lambda[(\facet, \chi)] := \Lambda[\psi] \qquad \text{on $\facet$}.
\end{align*}
By Proposition~\ref{pr:Lambda-comparison}, this definition is independent of the choice of $U$ and
$\psi$.
\end{definition}

Note that Proposition~\ref{pr:Lambda-comparison} implies a comparison for $\Lambda$ of facets: if
$(F_1, \chi_1)$, $(F_2, \chi_2)$ are two $W^\circ$-$(L^2)$ Cahn-Hoffman facets that are ordered,
$(F_1, \chi_1) \preceq (F_2, \chi_2)$ in the sense of $\chi_1 \leq \chi_2$, we have
\begin{align*}
  \Lambda[(F_1, \chi_1)] \leq \Lambda[(F_2, \chi_2)] \quad \text{a.e. on $F_1 \cap F_2$}.
\end{align*}

\begin{example}
  Let $n = 1$ and $W(p) = |p|$. Then the facet $(\facet, \chi)$ with $\facet = \set{a}$, $\chi \in C(\R \setminus
  \set{a}, \set{-1,1})$ is always Cahn-Hoffman if and only if $\chi(a-) \neq \chi(a+)$.

  Facet $(\facet, \chi)$ with $\facet = [a, b]$, $a < b$, $\chi \in C(\facet^c, \set{-1,1})$ is Cahn-Hoffman. The minimal divergence is
  constant on the facet, $\Lambda[(\facet,  \chi)] = \frac{\chi(a-) + \chi(b+)}{b - a}$ on $\facet$. It is
  inversely proportional to the length of the facet.

  For $n = 0$, $\R^0 = \set{0}$ and the facet $(\set{0}, \chi)$ is always Cahn-Hoffman with
  $\Lambda[(\facet, \chi)] = 0$.
\end{example}

\begin{example}
In an arbitrary dimension for any anisotropy $W$, the rescaled Wulff shape $\facet := c\Wulff = \set{x: W^\circ(x)
\leq c}$, $\chi = 1$ on $\facet^c$ forms a $W^\circ$-$(L^2)$ Cahn-Hoffman facet $(\facet, \chi)$ for any $c > 0$ with
$\Lambda[(\facet, \chi)] = \frac nc$. This can be seen easily by taking $\psi(x) = \max(W^\circ(x) - c, 0)$,
$z(x) = \frac x{\max(W^\circ(x), c)}$.
\end{example}

\begin{example}
  For $n=2$ and $W(p) = |p_1| + |p_2|$, a rather thorough characterization of $\Lambda[(\facet, \chi)]$
  for axes-aligned polygons $\facet$ is available in \cite{LMM}. In particular, if $\Lambda[(\facet, \chi)]$
  is a constant, then $\Lambda[(\facet, \chi)] = \frac{\mathcal H^1(\partial \facet \cap \set{\chi = +1}) -
  \mathcal H^1(\partial \facet \cap \set{\chi = -1})}{|\facet|}$, where $\mathcal H^1$ is the one-dimensional
  Hausdorff measure.
\end{example}

\begin{example}
  There is an interesting relationship between $\Lambda[(\facet, \chi)]$ and the Cheeger problem,
  that is, the problem of finding the subset $\Omega \subset A$ that minimizes the ratio
  $\frac{P(\Omega)}{|\Omega|}$ among all subsets, where $P(\Omega)$ is the perimeter of $\Omega$. In
  \cite{BNP01IFB} it was shown that a facet $(\facet, \chi)$ with $\facet \subset \R^2$ convex and
  $\chi = +1$ on $\facet^c$
  has $\Lambda[(\facet, \chi)]$ constant on $\facet$ if and only if $\facet$ is a solution of the Cheeger
  problem on $\facet$ with the $W^\circ$-perimeter.
\end{example}

\subsection{Review of the notion of viscosity solutions}
\label{sec:visc-sol}

Viscosity solutions are defined as continuous functions that satisfy the comparison (maximum)
principle with a class of sufficiently regular test functions. This way we do not need to assume
any further regularity about the candidate for a solution, but we have to choose a class of test
functions that is sufficiently large. As was pointed out in the introduction, the operator
$\divo (\nabla W)(\nabla \cdot)$ might not be well-defined even for smooth functions. We therefore
restrict the family of test functions to only the stratified admissible test functions defined below.

Before the definition, let us recall convenient coordinates for the space $\Rn$, introduced in
\cite{GP16}. For a fixed
$\hat p \in \Rn$, we define $Z_1 \subset \Rn$ to be the subspace parallel to the \emph{affine hull}
$\aff \partial W(\hat p)$ of $\partial W(\hat p)$, that is, the smallest subspace such that $Z_1 +
\xi \supset \partial W(\hat p)$ for some $\xi \in \Rn$.  Its dimension is then the dimension of the subdifferential $\partial W(\hat
p)$, $k = \dim \partial W(\hat p) := \dim Z_1$.  Let $Z_2 := Z_1^\perp$ be the orthogonal subspace. Then
$\Rn = Z_1 \oplus Z_2$, $Z_1$ is isometrically-isomorphic to $\R^{k}$ and $Z_2$ is
isometrically-isomorphic to $\R^{n-k}$. Fixing such isometries $\TT_1: \R^k \to Z_1$, $\TT_2: \R^{n-k}
\to Z_2$, we can write every $x \in \Rn$ uniquely in terms of $(x', x'')$, $x = \TT_1 x' + \TT_2 x''$,
where $x' \in \R^k$ and $x'' \in \R^{n-k}$. If $k = 0$ or $k = n$, we simply take $x = x''$ or $x =
x'$, respectively.

We also introduce the convex, positively one-homogeneous function $\SW_{\hat p}:\R^k \to \R$ as
\begin{align}
  \label{sliced-W}
  \SW_{\hat p}(w) := \lim_{\lambda \to 0+} \frac{W(\hat p + \lambda \TT_1 w) - W(\hat
  p)}{\lambda}, \qquad w \in \R^k.
\end{align}
This function represents the infinitesimal structure of $W$ near $\hat p$, sliced in
the direction of $Z_1$.
Analogously to \eqref{TV-energy}, we define the functional $\SE_{\hat p}: L^2(\Omega) \to \R$.

With $\hat p$, $k$ as above, we say that a function $\psi \in Lip(\R^k)$ is a \emph{$\hat
p$-admissible support function} if $(\set{\psi = 0}, \sign \psi)$ is a $(\SW_{\hat
p})^\circ$-$(L^2)$ Cahn-Hoffman facet.
For a $\hat p$-admissible support function $\psi$, we define the nonlocal curvature-like operator
\begin{align*}
  \Lambda_{\hat p}[\psi](x) := \Lambda[(\set{\psi = 0}, \sign \psi)], \qquad x
  \in \set{\psi = 0},
\end{align*}
where $\Lambda$, defined in Definition~\ref{de:minimal-divergence-facet}, is used with the
anisotropy $W = \SW_{\hat p}$.

\begin{definition}
  \label{de:admissible-stratified-test-func}
Let $\hat p \in \Rn$, $(\hat x, \hat t) \in \Rn \times \R$, $k := \dim \partial W(\hat p)$.
We say that $\varphi(x,t) = \psi(x') + f(x'') + \hat p \cdot x + g(t)$ is an \emph{admissible
stratified faceted test function at $(\hat x, \hat t)$ with slope $\hat p$} if $f \in C^1(\R^{n -
k})$, $\nabla f(\hat x'') = 0$, $g \in C^1(\R)$, and $\psi \in Lip(\R^k)$ is a $\hat p$-admissible
support function with $\hat x' \in \interior \set{\psi = 0}$.
Note that if $k = 0$, we have $\varphi(x,t) = f(x) + g(t)$ for some $f \in C^1(\Rn)$, $g \in
C^1(\R)$.
\end{definition}

\begin{definition}[{Viscosity solution, cf. \cite[Definition~5.2]{GP16}}]
  \label{def:visc-solution}
  We say that an upper semicontinuous function $u$ is a \emph{viscosity subsolution} of $u_t +
  F(\nabla u, \divo (\nabla W)(\nabla u)) = 0$ on $\Rn \times (0,
  T)$, $T > 0$, if for any $\hat p
  \in \Rn$, $\hat x \in \Rn$, $\hat t \in (0, T)$ and any admissible
  stratified faceted test function $\varphi$ at $(\hat x, \hat t)$ with slope $\hat p$ of the form $\varphi(x, t) = \psi(x') + f(x'') + \hat p \cdot x + g(t)$
  such that the function $u - \varphi(\cdot - h)$ has a global maximum on $\Rn \times (0, T)$ at $(\hat x, \hat t)$ for all
  sufficiently small $h' \in \R^k$ and $h'' = 0$, there exists $\delta > 0$ such that
  \begin{align}
    \label{visc_subsolution}
    g_t(\hat t) + F(\hat p, \essinf_{B_\delta(\hat x')} \Lambda_{\hat p}[\psi])  \leq 0.
  \end{align}

  \emph{Viscosity supersolutions} are defined analogously as lower semicontinuous functions, replacing a global maximum with a global
  minimum, $\essinf$ with $\esssup$, and
  reversing the inequality in \eqref{visc_subsolution}.

  A continuous function that is both a viscosity subsolution and a viscosity supersolution is
  called a \emph{viscosity solution}.
\end{definition}

\begin{remark}
  Definition~\ref{def:visc-solution} at $\hat p = 0$ is a natural extension of the definition that
  appeared in an earlier paper \cite{GGP13AMSA} for anisotropies $W$ smooth outside of the origin.
  In that case, if appropriate tests are given at $\hat p \neq 0$ where $W$ is smooth, the
  definition is equivalent to the definition of $\mathcal F$-solutions \cite{G06}.
  In fact, this equivalence can be proved along the lines of the proof of
  \cite[Proposition~2.2.8]{G06}, where one has to replace $|x-y|^4$ by an appropriate function
  whose zero set consists of the Wulff shape of $W$.

  Note that we do not need to consider the special test for ``curvature-free'' directions, $\hat p =
  0$, as in \cite{GP16} due to our ability to construct admissible stratified facet
functions in an arbitrary dimension.
\end{remark}

\subsection{Comparison principle}

One of the main results of \cite{GP16} was the comparison principle
Theorem~\ref{th:comparison-principle-intro},
 assuming Theorem~\ref{th:approximate_pair}. We shall give a brief sketch of the proof of
 Theorem~\ref{th:comparison-principle-intro} for the reader's convenience.

Due to the nonlocality of the problem, we require that $u$ and $v$ are constant outside of a
compact set to avoid technical issues with unbounded facets. However, it is an interesting question
how to handle such solutions as well as boundary conditions. This will be addressed in a future work.

To show Theorem~\ref{th:comparison-principle-intro}, we follow a variant of the standard proof by contradiction.
The problem \eqref{lse} has features of a second order problem in that similarly to Ishii's lemma that is used
to construct test functions with ordered second derivatives even for only semicontinuous solutions
in the usual argument,
we need to show the existence of ordered Cahn-Hoffman facets at a contact point.
In
this article we only give the outline of the proof, and show how to apply
Theorem~\ref{th:main}. For full details, see \cite{GP16}.

We
thus suppose that we have a viscosity subsolution $u$ and a viscosity supersolution $v$ satisfying
the hypothesis of Theorem~\ref{th:comparison-principle-intro}, but for which the conclusion does not hold,
that is,
\begin{align*}
  m_0 := \sup_Q (u - v) > 0,
\end{align*}
with $Q := \Rn \times (0, T)$.

Then we follow the standard doubling-of-variables argument with an extra parameter.
That is, for $\e > 0$, $\zeta \in \Rn$, we study the maxima of the functions
\begin{align*}
  \Phi_{\zeta, \e}(x, t, y, s) := u(x,t) - v(y,s) - \frac{\abs{x-  y - \zeta}^2}{2\e} - S_\e(t,s),
\end{align*}
over $\cl Q \times \cl Q = \Rn \times [0, T] \times \Rn \times [0, T]$, where
\begin{align*}
  S_\e(t, s) := \frac{\abs{t - s}^2}{2\e} + \frac{\e}{T - t} + \frac{\e}{T - t}.
\end{align*}
The introduction of $\zeta$ helps ``flatten'' the profile of $u$ and $v$ near the point of maximum
of $\Phi_{\zeta, \e}$ that we are interested in, see Corollary~\ref{co:contact-ordering} below.

We have the following important observation.

\begin{proposition}[{cf. \cite{GG98ARMA}}]
  \label{pr:max-interior}
  There exists $\e_0 > 0$ such that for all $\e \in (0, \e_0)$, $\abs{\zeta} \leq \kappa =
  \kappa(\e)$, $\Phi_{\zeta, \e}$ does not attain its maximum on the
  boundary of $Q \times Q$.
\end{proposition}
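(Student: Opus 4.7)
The plan follows the standard doubling-of-variables analysis in viscosity solution theory (cf.\ \cite{GG98ARMA}): first, establish a positive lower bound for $\sup \Phi_{\zeta,\e}$; then show that no face of $\partial(Q \times Q)$ can achieve this bound. For the lower bound, pick $(\hat x, \hat t) \in Q$ with $u(\hat x, \hat t) - v(\hat x, \hat t) > m_0/2$ and test along the diagonal,
\begin{align*}
\Phi_{\zeta,\e}(\hat x, \hat t, \hat x, \hat t) = u(\hat x, \hat t) - v(\hat x, \hat t) - \frac{|\zeta|^2}{2\e} - \frac{2\e}{T - \hat t}.
\end{align*}
Taking $\e_0$ small and $\kappa(\e) = O(\sqrt\e)$ ensures $\sup \Phi_{\zeta,\e} \geq m_0/4$ whenever $\e \leq \e_0$ and $|\zeta| \leq \kappa(\e)$; this is the threshold which each boundary face must violate.

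The faces $t = T$ and $s = T$ are immediate, since the barrier $\e/(T-t) + \e/(T-s)$ drives $\Phi_{\zeta,\e}$ to $-\infty$ as one approaches them. For the face $t = 0$ (the case $s = 0$ is symmetric), I would argue by contradiction: if no admissible pair $(\e_0, \kappa)$ worked, extract sequences $\e_j \downarrow 0$, $|\zeta_j| \to 0$, and maximizers $(x_j, 0, y_j, s_j)$ with $\Phi_{\zeta_j,\e_j}(x_j, 0, y_j, s_j) \geq m_0/4$. Boundedness of $u - v$ bounds the penalty, giving $|x_j - y_j - \zeta_j|^2 + s_j^2 \leq C\e_j$, hence $|x_j - y_j| \to 0$ and $s_j \to 0$. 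The hypothesis $u \equiv c_u \leq c_v \equiv v$ off the compact set $K$ then confines $(x_j, y_j)$ to a bounded set --- otherwise both sequences eventually lie outside $K$, where $u - v \leq 0$, contradicting the $m_0/4$ bound. Passing to a convergent subsequence $(x_j, y_j) \to (\bar x, \bar x)$ and combining the upper semicontinuity of $u$ with the lower semicontinuity of $v$ yields
\begin{align*}
m_0/4 \leq \limsup_j \bigl(u(x_j, 0) - v(y_j, s_j)\bigr) \leq u(\bar x, 0) - v(\bar x, 0) \leq 0,
\end{align*}
contradicting the initial ordering $u(\cdot, 0) \leq v(\cdot, 0)$.

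That the supremum is in fact attained on $\cl Q \times \cl Q$ (rather than merely approached) follows from an analogous compactness argument: any sequence with $\Phi_{\zeta,\e} \geq m_0/8$ remains in a compact subset of $\cl Q \times \cl Q$ bounded away from $t = T$ and $s = T$ (by the singular barrier and by the spatial penalty together with the off-$K$ sign condition), and $\Phi_{\zeta,\e}$ is upper semicontinuous there. The main obstacle is the $t = 0$ analysis above, where the one-sided semicontinuity of $u$ and $v$ must be patched together with the vanishing of the quadratic penalties and the smallness of $\zeta$ to extract the initial inequality in the limit; the remaining faces and the attainment step are routine bookkeeping.
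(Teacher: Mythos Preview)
The paper does not supply its own proof of this proposition; it merely cites \cite{GG98ARMA} and moves on. Your argument is the standard one from that literature and is correct in outline: the diagonal evaluation gives the strictly positive lower bound, the singular barrier rules out $t = T$ and $s = T$, and the semicontinuity-plus-initial-ordering limit rules out $t = 0$ and $s = 0$. One small point worth making explicit: when you assume by contradiction that the maximum sits on the boundary along a sequence, different terms of the sequence could in principle land on different faces; a pigeonhole step (four faces, infinitely many terms) lets you pass to a subsequence on a single face, after which your face-by-face analysis applies. Also, ``boundedness of $u-v$'' should be read as boundedness from above, which is all that the penalty estimate needs and which follows from $u$ upper semicontinuous, $v$ lower semicontinuous, and both constant off a compact set. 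With those two cosmetic clarifications the argument is complete.
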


Therefore, in what follows, we \emph{fix} one such $\e > 0$ from Proposition~\ref{pr:max-interior} and $\kappa =
\kappa(\e)$. We then write $\Phi_{\zeta} = \Phi_{\zeta, \e}$ and $S = S_\e$.

Now we follow the notation from \cite{GG98ARMA,GP16}. We define the maximum of $\Phi_\zeta$ as
\begin{align*}
  \ell(\zeta) := \max_{\cl Q \times \cl Q} \Phi_\zeta,
\end{align*}
and the set of the points of maximum
\begin{align*}
  \mathcal A(\zeta) := \argmax_{\cl Q \times \cl Q} \Phi_\zeta : =\set{(x,t,y,s) \in \cl Q \times
  \cl Q: \Phi_\zeta(x,t,y,s) = \ell(\zeta)}.
\end{align*}
Moreover, the set of gradients at maximum will be denoted as
\begin{align*}
  \mathcal B(\zeta) := \set{\frac{x - y - \zeta}\e :(x,t,y,s) \in \mathcal A(\zeta)}.
\end{align*}

We have the following compactness property.

\begin{proposition}[{cf. \cite[Proposition~7.3]{GP16}}]
  The graphs of $\mathcal A(\zeta)$ and $\mathcal B(\zeta)$ over $\abs{\zeta} \leq \kappa$ are
  compact.
\end{proposition}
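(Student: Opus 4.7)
The plan is to prove compactness of the graph of $\mathcal A$ directly, and then deduce compactness of the graph of $\mathcal B$ as the image of the former under the continuous map $(\zeta, x, t, y, s) \mapsto (\zeta, (x - y - \zeta)/\e)$. Since both graphs live in Euclidean space, it suffices to show that the graph of $\mathcal A$ is closed and bounded.

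For boundedness, I would first show that $\ell(\zeta)$ is bounded below by a positive constant, uniformly for $|\zeta| \leq \kappa$, after possibly shrinking $\kappa$. Since $u - v$ is upper semicontinuous on the compact set $K \times [0,T]$ and is nonpositive on $(\Rn \setminus K) \times [0,T]$ as well as at $t = 0$, the supremum $m_0 > 0$ is attained at some $(x^*, t^*) \in K \times (0, T)$. Evaluating $\Phi_\zeta$ at the diagonal point $(x^*, t^*, x^*, t^*)$ gives $\ell(\zeta) \geq m_0 - |\zeta|^2/(2\e) - S(t^*, t^*)$, which is bounded below by, say, $m_0/2$ for $\e$ and $\kappa$ small. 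Now using that $u$ and $v$ are bounded on $\Rn \times [0, T]$ (being semicontinuous and constant outside $K$), at any $(x, t, y, s) \in \mathcal A(\zeta)$ the penalty terms $|x - y - \zeta|^2/(2\e)$ and $|t - s|^2/(2\e)$ are uniformly bounded. Moreover, since $u \equiv c_u \leq c_v \equiv v$ outside $K$, at least one of $x, y$ must lie in $K$ (otherwise $\Phi_\zeta \leq 0 < \ell(\zeta)$), so the bound on $|x - y - \zeta|$ forces both $x$ and $y$ to stay in a bounded set.

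For closedness, I would establish continuity of the value function $\ell$. Lower semicontinuity is immediate by evaluating $\Phi_\zeta$ at a fixed test point and taking the supremum, since $\zeta \mapsto \Phi_\zeta(x^*, t^*, y^*, s^*)$ is continuous. For upper semicontinuity, picking $(x_n, t_n, y_n, s_n) \in \mathcal A(\zeta_n)$ and using the uniform boundedness from the previous step, one extracts a convergent subsequence; the joint upper semicontinuity of $(\zeta, x, t, y, s) \mapsto \Phi_\zeta(x, t, y, s)$ (from usc of $u$ and $-v$) then gives $\limsup \ell(\zeta_n) \leq \Phi_\zeta(x, t, y, s) \leq \ell(\zeta)$. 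With $\ell$ continuous, closedness of the graph of $\mathcal A$ follows: for a convergent sequence $(\zeta_n, x_n, t_n, y_n, s_n) \to (\zeta, x, t, y, s)$ in the graph, the chain $\ell(\zeta) = \lim \ell(\zeta_n) = \limsup \Phi_{\zeta_n}(x_n, t_n, y_n, s_n) \leq \Phi_\zeta(x, t, y, s) \leq \ell(\zeta)$ collapses to equality, so $(x, t, y, s) \in \mathcal A(\zeta)$. The main obstacle is the boundedness step, because the penalty terms only control $|x - y - \zeta|$, not $|x|$ or $|y|$ individually; this is precisely where the hypothesis that $u$ and $v$ are constant and ordered outside $K \times [0, T]$ becomes essential.
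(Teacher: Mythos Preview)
The paper does not provide its own proof of this proposition, deferring instead to \cite[Proposition~7.3]{GP16}. Your argument is the standard one and is essentially correct: boundedness via a uniform positive lower bound on $\ell(\zeta)$ combined with the constant-outside-$K$ hypothesis, closedness via continuity of $\ell$ and joint upper semicontinuity of $(\zeta,x,t,y,s)\mapsto\Phi_\zeta(x,t,y,s)$, and then the graph of $\mathcal B$ as a continuous image.

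One small point deserves care. You assert that $m_0$ is \emph{attained} at some $(x^*,t^*)\in K\times(0,T)$, but $Q=\Rn\times(0,T)$ is open and $u-v$ is only upper semicontinuous, so the supremum need not be realized in $Q$; it could in principle be approached only along a sequence with $t_n\to T$. Your argument does not actually require attainment: it is enough to fix any $(x^*,t^*)\in Q$ with $(u-v)(x^*,t^*)>m_0/2$, which certainly exists by definition of $m_0$. Then $\ell(\zeta)\geq \Phi_\zeta(x^*,t^*,x^*,t^*) = (u-v)(x^*,t^*) - |\zeta|^2/(2\e) - 2\e/(T-t^*)$ gives the desired uniform lower bound once $\e$ and $\kappa$ are chosen small relative to the fixed quantities $m_0$ and $T-t^*$. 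This is consistent with the choice of $\e_0$ and $\kappa(\e)$ already made in Proposition~\ref{pr:max-interior}, so no further shrinking is needed.
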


This compactness and the simple structure of $\partial W$, piece-wise constant on relatively open
convex sets, allow us to use the Baire category theorem to find a direction in which $u$ and $v$
have certain flatness.

\begin{proposition}[{cf. \cite[Proposition~7.4]{GP16}}]
  \label{pr:Xi-existence}
  There exists a set $\Xi \in \Rn$, a vector $\hat \zeta \in \Rn$ and
  $\lambda > 0$ such that $|\hat \zeta| + 2\lambda < \kappa$, $\partial W(p)$ is independent of $p
  \in \Xi$, and
  \begin{align*}
    \mathcal B(\zeta) \cap \Xi \neq \emptyset \qquad \text{for all $|\zeta - \hat \zeta| < 2\lambda$.}
  \end{align*}
  The set $\Xi$ can be taken as a relatively open convex set in its affine hull $\aff \Xi$, which is
  orthogonal to $\aff \partial W(p)$, $p \in \Xi$.
\end{proposition}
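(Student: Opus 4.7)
The plan is to exploit the finite polyhedral structure of $\partial W$ together with the upper semicontinuity of $\mathcal B$ (which follows from the compactness of its graph established in the preceding proposition) via a Baire category argument. Since $W$ is crystalline, $\partial W$ takes only finitely many values $F_1, \dots, F_N$, each a face of the Wulff shape, and the sets $\Xi_\alpha := \set{p \in \Rn : \partial W(p) = F_\alpha}$ form a finite disjoint cover of $\Rn$ by relatively open convex cones. Polyhedral duality shows that the linear subspace parallel to $\aff \Xi_\alpha$ is orthogonal to the one parallel to $\aff F_\alpha$ (from $W(p+tv) - W(p) = t\, \xi \cdot v$ for every $\xi \in F_\alpha$ whenever $p, p+tv \in \Xi_\alpha$). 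Upper semicontinuity of $\partial W$ further enforces the closure relation: $\Xi_\beta \subset \cl{\Xi_\alpha}$ implies $F_\alpha \subsetneq F_\beta$, so boundary strata of $\Xi_\alpha$ always carry strictly higher-dimensional subdifferentials.

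Next, I would define $f(\zeta) := \max\set{\dim F_{\alpha(p)} : p \in \mathcal B(\zeta)}$, which is well-defined because $\mathcal B(\zeta)$ is compact and non-empty, and bounded above by $n$. The map $f$ is upper semicontinuous as an integer-valued function: if $\zeta_m \to \zeta$ and $p_m \in \mathcal B(\zeta_m)$ achieves $\dim F_{\alpha(p_m)} \geq k$, then a subsequential limit $p \in \mathcal B(\zeta)$ lies in the closure of some stratum $\Xi_\alpha$ with $\dim F_\alpha \geq k$, hence in a stratum $\Xi_\beta$ with $F_\beta \supseteq F_\alpha$, giving $f(\zeta) \geq k$. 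Consequently $\set{f \leq k}$ is open for every $k$. Let $k^*$ be the largest integer such that $\set{f \geq k^*}$ has non-empty interior (which exists as $f$ is bounded), and set
\begin{align*}
  V := \interior\set{f \geq k^*} \setminus \set{f \geq k^* + 1}.
\end{align*}
Then $V$ is open, non-empty (the subtracted set has empty interior by the choice of $k^*$), and $f \equiv k^*$ on $V$.

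For any $\zeta \in V$, $\mathcal B(\zeta)$ meets some $\cl{\Xi_i}$ with $\dim F_i = k^*$; but it cannot reach the boundary strata of $\cl{\Xi_i}$, which all have $\dim F > k^*$, and so $\mathcal B(\zeta) \cap \Xi_i \neq \emptyset$. The finitely many sets $V_i := \set{\zeta \in V : \mathcal B(\zeta) \cap \cl{\Xi_i} \neq \emptyset}$, indexed by strata with $\dim F_i = k^*$, therefore coincide on $V$ with $\set{\zeta \in V : \mathcal B(\zeta) \cap \Xi_i \neq \emptyset}$, are closed in $V$ by upper semicontinuity of $\mathcal B$, and cover $V$. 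Since $V$ is an open subset of $\Rn$ and hence a Baire space, some $V_{i_0}$ contains an open ball $\set{\zeta : |\zeta - \hat\zeta| < 2\lambda}$; slight shrinking ensures $|\hat\zeta| + 2\lambda < \kappa$. Setting $\Xi := \Xi_{i_0}$ then delivers the proposition, with orthogonality of affine hulls as recorded above.

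The main obstacle is that upper semicontinuity of $\mathcal B$ only directly yields $\mathcal B(\zeta) \cap \cl{\Xi} \neq \emptyset$ under small perturbations of $\zeta$, not $\mathcal B(\zeta) \cap \Xi \neq \emptyset$, because the relatively open stratum $\Xi$ need not be open in $\Rn$. The purpose of passing to the open set $V$ on which $f$ is locally maximal is precisely to make hitting $\cl{\Xi_i}$ equivalent to hitting $\Xi_i$, so that Baire's theorem can convert a finite closed covering of $V$ into an open ball contained in one member.
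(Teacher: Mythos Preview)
Your proof is correct and follows precisely the Baire category strategy the paper indicates (the paper itself does not give a proof, deferring to \cite[Proposition~7.4]{GP16}, but explicitly notes just before the statement that the compact graph of $\mathcal B$ and the finite decomposition of $\Rn$ into relatively open convex cones on which $\partial W$ is constant set up a Baire argument). Your handling of the one genuine subtlety---that upper semicontinuity of $\mathcal B$ only gives $\mathcal B(\zeta)\cap\cl{\Xi_i}\neq\emptyset$, and one must pass to an open set $V$ on which the maximal face dimension $f$ is constant to upgrade this to $\mathcal B(\zeta)\cap\Xi_i\neq\emptyset$---is exactly what is needed, and matches the structure of the argument in \cite{GP16}.
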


In other words, the proposition implies the existence of a set $\Xi \subset \Rn$ and a point $\hat
\zeta\in \Rn$
such that for any $\zeta \in \Rn$ close to $\hat \zeta$ there
exists a point of maximum $(\hat x, \hat t, \hat y, \hat s) \in \mathcal A(\zeta)$ of $\Phi_\zeta$
such that $\frac{\hat x - \hat y - \zeta}\e \in \Xi$. Noting that $\Phi_\zeta(x, t, \hat y, \hat s)
\leq \Phi_\zeta(\hat x, \hat t, \hat y, \hat s)$,
we see that $\frac{\hat x - \hat y - \zeta}\e$ is the
gradient of the smooth test function $(x,t) \mapsto \frac{\abs{x - \hat y - \zeta}^2}{2\e} + S(t, \hat s)$
touching $u$ at $(\hat x, \hat t)$. A similar reasoning applies for $v$. Since this gradient always
falls into $\Xi$, we recover some flatness of $u$ and $v$ in the directions orthogonal to $\aff
\Xi$, in the sense of the following lemma.

\begin{lemma}[{cf. \cite[Lemma~7.6]{GP16}}]
\label{le:max-constancy}
Suppose that there exist $\hat p, \hat \zeta \in \Rn$, a subspace $Z_2 \subset \Rn$ and $\lambda > 0$
such that $|\hat \zeta| + 2\lambda < \kappa$
and
\begin{align*}
\mathcal B(\zeta) \cap (\hat p + Z_2) \neq \emptyset  \qquad \text{for all $|\zeta - \hat \zeta| < 2\lambda$.}
\end{align*}
Then
\begin{align*}
\ell(\zeta) - \hat p \cdot \zeta \equiv const
\quad \text{for all } \zeta \in \hat \zeta + Z_1,\ |\zeta - \hat \zeta| < 2\lambda,
\end{align*}
where $Z_1 := Z_2^\perp$.
\end{lemma}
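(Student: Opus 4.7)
The plan is to extract a convex structure from $\ell$ and then integrate a one-dimensional subgradient along $Z_1$. Expanding $|x-y-\zeta|^2/(2\e)$ shows that
\[
  g(\zeta) := \ell(\zeta) + \frac{|\zeta|^2}{2\e}
  = \sup_{(x,t,y,s)\in\cl Q\times\cl Q}\bra{u(x,t) - v(y,s) - \frac{|x-y|^2}{2\e} - S(t,s) + \frac{(x-y)\cdot\zeta}{\e}}
\]
is a supremum of functions affine in $\zeta$, hence convex (and finite, since $u,v$ are bounded and the quadratic penalty confines the effective domain). For each $(x,t,y,s)\in\mathcal A(\zeta)$ the gradient $(x-y)/\e$ of the attained affine piece lies in $\partial g(\zeta)$, so the hypothesis $\mathcal B(\zeta)\cap(\hat p + Z_2)\neq\emptyset$ produces some $\xi_\zeta = \hat p + \zeta/\e + z_\zeta \in \partial g(\zeta)$ with $z_\zeta\in Z_2$, for every $|\zeta - \hat\zeta|<2\lambda$.

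Next I would restrict to a line in $\hat\zeta + Z_1$. Fix $\tau_0,\tau_1\in Z_1$ with $|\tau_i|<2\lambda$ and set $\phi(s) := g(\hat\zeta + \tau_0 + s(\tau_1-\tau_0))$ for $s\in[0,1]$. As the restriction of a convex function to a segment, $\phi$ is convex, locally Lipschitz, and differentiable almost everywhere. Because $\tau_1-\tau_0\in Z_1$ annihilates the $Z_2$-component of $\xi_\zeta$, the subgradient chain rule yields
\[
  \partial\phi(s) \ni \xi_{\hat\zeta + \tau_0 + s(\tau_1-\tau_0)}\cdot(\tau_1-\tau_0) = B + Cs,
\]
where $B := \pth{\hat p + (\hat\zeta + \tau_0)/\e}\cdot(\tau_1 - \tau_0)$ and $C := |\tau_1 - \tau_0|^2/\e$.

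At every differentiability point of a convex function the subdifferential is a singleton, so $\phi'(s) = B + Cs$ almost everywhere on $(0,1)$. Absolute continuity of $\phi$ then gives, via the fundamental theorem of calculus,
\[
  \phi(1) - \phi(0) = B + \tfrac{C}{2} = \hat p\cdot(\tau_1-\tau_0) + \frac{\hat\zeta\cdot(\tau_1-\tau_0)}{\e} + \frac{|\tau_1|^2 - |\tau_0|^2}{2\e}.
\]
Substituting $\ell = g - |\cdot|^2/(2\e)$ and expanding $|\hat\zeta + \tau_1|^2 - |\hat\zeta + \tau_0|^2 = 2\hat\zeta\cdot(\tau_1 - \tau_0) + |\tau_1|^2 - |\tau_0|^2$ cancels both the $\hat\zeta$-cross-term and the quadratic term, leaving
\[
  \ell(\hat\zeta + \tau_1) - \ell(\hat\zeta + \tau_0) = \hat p\cdot(\tau_1 - \tau_0),
\]
which is exactly the asserted constancy of $\ell(\zeta) - \hat p\cdot\zeta$ on $\hat\zeta + Z_1$ near $\hat\zeta$. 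The only subtle step is the a.e.\ identification $\phi'(s) = B + Cs$; this rests on the classical fact that a convex function of a real variable is differentiable off a countable set, where its one-sided derivatives squeeze any chosen subgradient. The remainder is bookkeeping in the decomposition $\Rn = Z_1 \oplus Z_2$.
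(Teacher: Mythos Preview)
Your proof is correct. The paper does not give its own proof of this lemma but simply cites \cite[Lemma~7.6]{GP16}; your argument---exploiting that $g(\zeta)=\ell(\zeta)+|\zeta|^2/(2\e)$ is convex as a supremum of affine functions, identifying a subgradient in $\hat p+\zeta/\e+Z_2$ from the hypothesis on $\mathcal B(\zeta)$, and then integrating along a segment in $\hat\zeta+Z_1$ using one-dimensional convex analysis---is precisely the semiconvexity approach that underlies the cited result.
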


The following can be derived about the behavior of $u$ and $v$ from the above lemma. Recall that
$x' \in \R^k$ is identified with the orthogonal projection of $x$ onto $Z_1$ and $x'' \in \R^{n-k}$ with the
projection onto $Z_2$.
\begin{corollary}[{cf. \cite[Corollary~7.7]{GP16}}]
\label{co:contact-ordering}
Suppose that we have $\hat p$, $\hat \zeta$, $\lambda$, $Z_1$ and $Z_2$
as in Lemma~\ref{le:max-constancy}.
Define
\begin{align*}
\theta(x,t,y,s) := u(x,t) - v(y,s) - \frac{|x'' - y''- \hat \zeta''|^2}{2\e}
- \hat p' \cdot (x' - y'-\hat \zeta') - S(t,s).
\end{align*}
Then for any $(\hat x,\hat t,\hat y,\hat s) \in \mathcal A(\hat \zeta)$
such that $\frac{\hat x' - \hat y' - \hat \zeta'}{\e} = \hat p'$
we have
\begin{align*}
\theta(x,t,y,s) \leq \theta(\hat x, \hat t, \hat y,\hat s) \quad \text{for } (x,t),(y,s) \in \cl Q,
\ \abs{x' - y' - (\hat x' - \hat y')} \leq \lambda.
\end{align*}
\end{corollary}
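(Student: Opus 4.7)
The plan is to compare $\theta$ directly with $\Phi_\zeta$ for a carefully chosen $\zeta$ lying in the slice $\hat\zeta + Z_1$, and let the affine behavior of $\ell$ guaranteed by Lemma~\ref{le:max-constancy} do the rest. Introduce $w := x' - y' - \hat\zeta'$ and $\hat w := \hat x' - \hat y' - \hat\zeta'$; the hypothesis $\frac{\hat x' - \hat y' - \hat\zeta'}{\e} = \hat p'$ forces $\hat w = \e\hat p'$. The key algebraic observation is that with the choice
\[ \zeta := \hat\zeta + (w - \hat w), \]
one has $\zeta \in \hat\zeta + Z_1$, $\zeta'' = \hat\zeta''$, and a direct computation gives $(x - y - \zeta)' = \hat w$, so that
\[ |x - y - \zeta|^2 = |\hat w|^2 + |x'' - y'' - \hat\zeta''|^2. \]
In other words, this substitution ``freezes'' the $Z_1$-part of the penalty in $\Phi_\zeta$ at the constant value $|\hat w|^2$, leaving only the $Z_2$-part that already appears in $\theta$.

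Under the hypothesis $|x' - y' - (\hat x' - \hat y')| \leq \lambda$, we have $|\zeta - \hat\zeta| = |w - \hat w| \leq \lambda < 2\lambda$, so Lemma~\ref{le:max-constancy} applies, and since $w - \hat w \in Z_1$,
\[ \ell(\zeta) - \ell(\hat\zeta) = \hat p \cdot (\zeta - \hat\zeta) = \hat p' \cdot (w - \hat w). \]
Combining the inequality $\Phi_\zeta(x,t,y,s) \leq \ell(\zeta)$ with the identity for $|x - y - \zeta|^2$ and the definition of $\theta$, a short rearrangement yields
\[ \theta(x,t,y,s) \leq \ell(\hat\zeta) - \hat p' \cdot \hat w + \frac{|\hat w|^2}{2\e} = \ell(\hat\zeta) - \frac{|\hat w|^2}{2\e}, \]
where the last equality uses $\hat p' \cdot \hat w = |\hat w|^2/\e$ coming from $\hat w = \e\hat p'$.

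Finally, one checks by an analogous direct computation, starting from $\Phi_{\hat\zeta}(\hat x, \hat t, \hat y, \hat s) = \ell(\hat\zeta)$ and using the same decomposition $|\hat x - \hat y - \hat\zeta|^2 = |\hat w|^2 + |\hat x'' - \hat y'' - \hat\zeta''|^2$, that $\theta(\hat x, \hat t, \hat y, \hat s) = \ell(\hat\zeta) - \frac{|\hat w|^2}{2\e}$ as well, which completes the proof. The only nontrivial step is spotting the substitution $\zeta = \hat\zeta + (w - \hat w)$: it converts the quadratic penalty in $x' - y'$ that appears in $\Phi_\zeta$ into a constant plus a linear term, exactly matching the linear term $-\hat p' \cdot (x' - y' - \hat\zeta')$ in $\theta$. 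Once this substitution is identified, everything else is routine bookkeeping in the $Z_1 \oplus Z_2$ decomposition, with all of the analytic content supplied by Lemma~\ref{le:max-constancy}.
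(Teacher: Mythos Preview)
Your proof is correct and is exactly the natural argument one expects here: shift $\zeta$ within the slice $\hat\zeta + Z_1$ so as to freeze the $Z_1$-component of the quadratic penalty, invoke the affine behavior of $\ell$ from Lemma~\ref{le:max-constancy}, and then unwind the algebra. The paper does not give its own proof of this corollary, deferring instead to \cite[Corollary~7.7]{GP16}, but your argument is precisely the one underlying that reference. One tiny notational point: when you write $\zeta := \hat\zeta + (w - \hat w)$ you are implicitly identifying $w - \hat w \in \R^k$ with $\TT_1(w - \hat w) \in Z_1$; this is harmless and consistent with the paper's conventions, but it would not hurt to make the identification explicit.
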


Now we want to construct admissible stratified faceted test functions for $u$ and $v$ to reach a
contradiction with the definition of a viscosity solution. Therefore for the rest of the proof, we
fix $\Xi$, $\hat \zeta$ and $\lambda$ from Proposition~\ref{pr:Xi-existence}, and a point
of maximum $(\hat x,\hat t,\hat y,\hat s) \in \mathcal A(\hat \zeta)$ such that $\hat p := \frac{\hat x -
\hat y - \hat \zeta}\e \in \Xi$.
Given that $\partial W(p)$ is independent of $p \in \Xi$, we set $Z_1 \subset \Rn$ parallel to the affine hull
$\aff\partial W(p)$, $p \in \Xi$. The convexity of $W$ implies that $\Xi - \hat p \subset Z_2 := Z_1^\perp$,
see \cite[Proposition~3.1]{GP16}, and therefore Lemma~\ref{le:max-constancy} and
Corollary~\ref{co:contact-ordering} apply.

Let us set $k := \dim \partial W(\hat p) = \dim Z_1 = n - \dim Z_2$. We need to construct two $\hat p$-admissible
support functions on $\R^k$ ordered so that the comparison principle in
Proposition~\ref{pr:Lambda-comparison} applies, and which we can use to build the admissible
faceted test functions at $(\hat x, \hat t)$ for $u$ and at $(\hat y, \hat s)$ for $v$, with slope
$\hat p$. Since this is trivial if $k = 0$, we will from now assume that $k \geq 1$.

\subsubsection{Facet construction}

We first observe that $\hat p \perp Z_1$ since $W$ is positively one-homogeneous and
therefore $\hat p' =
0$. We need to find two facets to which to apply Theorem~\ref{th:main}. We
follow \cite{GP16}. First we define the functions
\begin{align*}
\begin{aligned}
\hat u(w) &:= u(\TT_1 w + \hat x, \hat t) - u(\hat x, \hat t),\\
\hat v(w) &:= v(\TT_1 w + \hat y, \hat s) - v(\hat y, \hat s),
\end{aligned}
&&& w \in \R^k,
\end{align*}
and their level sets
\begin{align*}
\hat U := \set{w \in \R^k : \hat u(w) \geq 0}, &&&
\hat V := \set{w \in \R^k : \hat v(w) \leq 0}.
\end{align*}

\begin{lemma}
  \label{le:bounded-U-V}
  Sets $\hat U$ and $\hat V$ are closed with compact boundary, and at least one is compact.
\end{lemma}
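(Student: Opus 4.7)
The plan is to deduce closedness of $\hat U$ and $\hat V$ directly from semicontinuity, get compactness of the boundaries from the constancy of $u$ and $v$ outside the compact set $K$, and finally use $m_0 > 0$ together with $c_u \leq c_v$ to argue that at least one of the two sets is bounded.

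\textbf{Closedness and compact boundary.} Since $u$ is upper semicontinuous, the function $\hat u(w) = u(\TT_1 w + \hat x, \hat t) - u(\hat x, \hat t)$ is upper semicontinuous on $\R^k$, so its superlevel set $\hat U = \set{\hat u \geq 0}$ is closed. Likewise $\hat v$ is lower semicontinuous and $\hat V = \set{\hat v \leq 0}$ is closed. Next, by hypothesis $u(\cdot, \hat t) \equiv c_u$ on $\Rn \setminus K$ and $v(\cdot, \hat s) \equiv c_v$ on $\Rn \setminus K$; choosing $R > 0$ large enough that $\TT_1 w + \hat x$ and $\TT_1 w + \hat y$ both lie in $\Rn \setminus K$ whenever $|w| > R$, the functions $\hat u$ and $\hat v$ are constants, say $\alpha := c_u - u(\hat x, \hat t)$ and $\beta := c_v - v(\hat y, \hat s)$, outside of $\overline{B_R} \subset \R^k$. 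Hence $\hat U \setminus B_R$ is either empty (if $\alpha < 0$) or equal to $\R^k \setminus B_R$ (if $\alpha \geq 0$), and in both cases $\partial \hat U \subset \overline{B_R}$ is compact; the same argument applies to $\partial \hat V$.

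\textbf{At least one set is compact.} Plugging the maximum point into $\Phi_{\hat \zeta}$ gives
\begin{align*}
  u(\hat x, \hat t) - v(\hat y, \hat s) \geq \ell(\hat \zeta) \geq m_0 - C\e > 0
\end{align*}
for $\e$ small enough (using $\ell(\hat \zeta) \geq \Phi_{0,\e}(x_0, t_0, x_0, t_0)$ at a near-maximizer $(x_0,t_0)$ of $u-v$ and the explicit form of the penalization $S_\e$). Consequently $u(\hat x, \hat t) > v(\hat y, \hat s)$. I now split on the value of $v(\hat y, \hat s)$ relative to $c_v$. If $v(\hat y, \hat s) < c_v$, then $\beta > 0$, so $\hat v > 0$ outside $B_R$, forcing $\hat V \subset \overline{B_R}$; since $\hat V$ is closed and bounded it is compact. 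If instead $v(\hat y, \hat s) \geq c_v$, then combined with $c_u \leq c_v$ we get $u(\hat x, \hat t) > c_v \geq c_u$, hence $\alpha < 0$, so $\hat u < 0$ outside $B_R$, giving $\hat U \subset \overline{B_R}$, which is compact.

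\textbf{Main obstacle.} The only delicate point is the strict inequality $u(\hat x, \hat t) > v(\hat y, \hat s)$ at the chosen maximum, which relies on Proposition~\ref{pr:max-interior} (so that the maximum is interior, not killed by the boundary penalization) and on choosing $\e$ sufficiently small so that $\ell(\hat\zeta) > 0$; everything else is a routine consequence of semicontinuity and of the fact that $\hat u$ and $\hat v$ are eventually constant on $\R^k$.
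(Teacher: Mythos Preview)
Your proof is correct and follows essentially the same route as the paper: closedness from semicontinuity, compact boundaries from eventual constancy of $\hat u,\hat v$, and the key strict inequality $u(\hat x,\hat t)>v(\hat y,\hat s)$ to force at least one of $\alpha,\beta$ to have the right sign. The paper states this last inequality more tersely (``Since $\sup_Q(u-v)>0$ and $u\leq v$ outside a bounded set'') and then argues via $\hat u<\hat v$ at infinity rather than your explicit case split on $v(\hat y,\hat s)$ versus $c_v$, but the content is identical; your remark that the smallness of $\e$ needed for $\ell(\hat\zeta)>0$ must already be built into the choice of $\e_0$ in Proposition~\ref{pr:max-interior} is exactly right (and the paper leaves this implicit), and the minor slip $\Phi_{0,\e}$ in your lower bound should read $\Phi_{\hat\zeta,\e}$ evaluated on the diagonal.
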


\begin{proof}
  Closedness follows from semicontinuity of $\hat u$ and $\hat v$.
  Since $\sup_Q (u - v) > 0$ and $u \leq v$ outside of a bounded set, we have $u(\hat x, \hat t) >
  v(\hat y, \hat s)$ and therefore $\hat u < \hat v$ outside of a bounded set.
  Since $\hat u$ and $\hat v$ are also both constant outside of a bounded set, $\hat U$ and
  $\hat V$ cannot be both unbounded, and their boundaries must be compact. The lemma follows.
\end{proof}

For convenience, we set
\begin{align*}
\xi_u(x'', t) &:= \frac{|x'' - \hat y'' - \hat \zeta''|^2}{2\e}
- \frac{|\hat x'' - \hat y'' - \hat \zeta''|^2}{2\e} + S(t, \hat s) - S(\hat t, \hat s),\\
\xi_v(y'', s) &:= \frac{|\hat x'' - \hat y'' - \hat \zeta''|^2}{2\e}
- \frac{|\hat x'' - y'' - \hat \zeta''|^2}{2\e} + S(\hat t, \hat s) - S(\hat t, s),
\end{align*}
$x'', y'' \in \R^{n-k}$, $t,s \in \R$.
From Corollary~\ref{co:contact-ordering} we deduce
\begin{align}
  \label{uv-nonpos}
  \begin{aligned}
u(x,t) - u(\hat x, \hat t) - \xi_u(x'', t) &\leq 0,
&&\text{for } (x,t) \in \cl Q, \dist(x' - \hat x', \hat V) \leq \lambda,\\
v(y,s) - v(\hat y, \hat s) - \xi_v(y'', s) &\geq 0,
&&\text{for } (y,s) \in \cl Q, \dist(y' - \hat y', \hat U) \leq \lambda.
  \end{aligned}
\end{align}
For reasons that shall become apparent in the proof of Lemma~\ref{le:pair-properties} below, we set $r :=
\frac{\lambda}{5}$. From the definition of $\hat U$, $\hat V$, semicontinuity of $u$, $v$, and the fact that $u$ and
$v$ are constant outside of a bounded set,
there exists $\delta > 0$ such that
\begin{equation}
  \label{uv-neg}
  \begin{aligned}
u(x,t) - u(\hat x, \hat t)  - \xi_u(x'', t) &< 0,
&&\dist(x' - \hat x', \hat U) \geq r, |x'' - \hat x''| \leq \delta, |t - \hat t| \leq \delta,\\
v(y,s) - v(\hat y, \hat s)  - \xi_v(y'', s) &> 0,
&&\dist(y' - \hat y', \hat V) \geq r, |y'' - \hat y''| \leq \delta, |s - \hat s| \leq \delta.
  \end{aligned}
\end{equation}

We now introduce $k$-dimensional facets $(F_u, \chi_u)$ and $(F_v, \chi_v)$. $F_u$ and $\chi_u \in
C(F_u^c, \set{-1, 1})$ are defined as
\begin{align}
  \nonumber
  F_u &:= \{w \in \R^k: \dist(w, \hat U) \leq 2r \wedge (\dist(w, \hat U) \geq 2r \vee \dist(w, \hat
  V) \leq \lambda - r)\},\\
  \label{chi_u_v}
  \chi_u(w) &:=
  \begin{cases}
    1, & \dist(w, \hat U) < 2r \wedge \dist(w, \hat V) > \lambda -r,\\
    -1, & \dist(w, \hat U) > 2r,\\
    0, & \text{otherwise},
  \end{cases}
\end{align}
and $F_v$, $\chi_v \in C(F_v^c, \set{-1, 1})$ are defined analogously, swapping $\hat U$ with $\hat V$ and $F_u$
with $F_v$.
Note that $F_u$ and $F_v$ are compact due to Lemma~\ref{le:bounded-U-V}.

Applying Theorem~\ref{th:main} with $\rho = r$, for anisotropy $\SW_{\hat p}$,
we obtain a $W_{\hat p}^{{\rm sl}, \circ}$-($L^2$) Cahn-Hoffman facet $(G_u, \tilde \chi_u)$  for
the facet $(F_u, \chi_u)$. Similarly, with anisotropy $p \mapsto \SW_{\hat p}(-p)$, we obtain
a facet $(G_v, \tilde \chi_v)$ for $(F_v, \chi_v)$. Note that $(G_v, - \tilde \chi_v)$ is a
$W_{\hat p}^{{\rm sl}, \circ}$-($L^2$) Cahn-Hoffman facet.

To finish the construction of the test functions, we set for $x', y' \in \R^k$
\begin{align*}
\tilde u(x' - \hat x') &:=\sup_{|x'' - \hat x''| \leq \delta}\sup_{|t - \hat t| \leq \delta} \bra{ u(x,t) - u(\hat
x, \hat t)  - \xi_u(x'', t)},\\
\tilde v(y' - \hat y') &:= \inf_{|y'' - \hat y''| \leq \delta}\inf_{|s - \hat s| \leq \delta} \bra{v(y,s) - v(\hat
y, \hat s)  - \xi_v(y'', s)}.
\end{align*}

\begin{lemma}
\label{le:pair-properties}
The facets $(G_u, \tilde \chi_u)$ and $(G_v, -\tilde \chi_v)$ have the following properties:
\begin{enumerate}
  \item The facets are ordered, and ordered with respect to the ``facets'' of $\tilde
    u$ and $\tilde v$, namely,
\begin{align}
\label{pair-order}
\sup_{|y - x| \leq r} \sign \tilde u(y) \leq \tilde \chi_u(x) \leq - \tilde\chi_v(x) \leq
\inf_{|y-x| \leq r} \sign \tilde v(y), \qquad x \in \Rn.
\end{align}
\item The origin $0$ lies in the interior of the intersection of the facets, i.e.,
\begin{align*}
  \cl B_r(0) \subset G_u \cap G_v.
\end{align*}
\end{enumerate}
\end{lemma}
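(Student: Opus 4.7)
The plan is to prove part (i) inequality-by-inequality and part (ii) by a direct containment argument, both leveraging the two-sided bound $\chi(x) \leq \tilde\chi(x) \leq \sup_{|y-x|\leq r}\chi(y)$ from Theorem~\ref{th:main} together with the contrapositives of \eqref{uv-nonpos}--\eqref{uv-neg}. The key observation that makes everything fit, and that motivates the choice $r = \lambda/5$, is that these contrapositives translate the sign of $\tilde u$ and $\tilde v$ into distance estimates with respect to $\hat U$ and $\hat V$, and $\lambda = 5r$ leaves exactly enough slack in the triangle inequality to separate the three distance regimes $<2r$, $>2r$, $>\lambda - r = 4r$ appearing in \eqref{chi_u_v}.

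For the first inequality in \eqref{pair-order}, I would first extract from \eqref{uv-nonpos}--\eqref{uv-neg} the implications
\begin{align*}
  \tilde u(y) > 0 &\;\Rightarrow\; \dist(y, \hat U) < r \text{ and } \dist(y, \hat V) > \lambda,\\
  \tilde v(y) < 0 &\;\Rightarrow\; \dist(y, \hat V) < r \text{ and } \dist(y, \hat U) > \lambda.
\end{align*}
If $\sign \tilde u(y_0) = 1$ for some $|y_0 - x| \leq r$, then the triangle inequality gives $\dist(x, \hat U) < 2r$ and $\dist(x, \hat V) > \lambda - r$, so $\chi_u(x) = 1$ by \eqref{chi_u_v}, and the lower half of Theorem~\ref{th:main} upgrades this to $\tilde\chi_u(x) = 1$. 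The third inequality is symmetric, with the roles of $(\hat U, \tilde u)$ and $(\hat V, \tilde v)$ swapped. For the middle inequality, the upper halves of the Theorem~\ref{th:main} bound give
\begin{align*}
  \tilde\chi_u(x) + \tilde\chi_v(x) \leq \sup_{|y-x|\leq r}\chi_u(y) + \sup_{|z-x|\leq r}\chi_v(z),
\end{align*}
so it suffices to show $\chi_u(y) + \chi_v(z) \leq 0$ whenever $|y-z|\leq 2r$. This reduces to a short case analysis on the values in $\{-1,0,1\}$; the combinations $\chi_u(y)=1$ with $\chi_v(z)\geq 0$ and $\chi_u(y)=0$ with $\chi_v(z)=1$ are excluded because the threshold gap $\lambda - r - 2r = 2r$ is controlled exactly by the diameter of the two balls.

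For part (ii), the starting observation is that $\hat u(0) = \hat v(0) = 0$, so $0 \in \hat U \cap \hat V$. Hence for any $|x| \leq r$ we have $\dist(x, \hat U), \dist(x, \hat V) \leq r$, and since $r < 2r$ and $r < \lambda - r$, the definition \eqref{chi_u_v} rules out both $\chi_u(x) = \pm 1$, forcing $\chi_u(x) = 0$. Moreover for any $|y - x| \leq r$ we have $|y| \leq 2r$, whence $\dist(y, \hat V) \leq 2r < \lambda - r$, which excludes $\chi_u(y) = 1$; the upper bound in Theorem~\ref{th:main} then yields $\tilde\chi_u(x) \leq 0$, and combined with the lower bound $\tilde\chi_u(x) \geq \chi_u(x) = 0$ this gives $\tilde\chi_u(x) = 0$, i.e.~$x \in G_u$. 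Swapping the roles of $\hat U$ and $\hat V$ gives $x \in G_v$. The main subtlety throughout is only bookkeeping: one must keep the thresholds $2r$, $\lambda - r = 4r$ and $\lambda = 5r$ consistent so that each triangle-inequality estimate becomes strict, and there is no deeper obstacle beyond this calibration.
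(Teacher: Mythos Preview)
Your argument is correct and follows essentially the same route as the paper's: reduce each inequality in \eqref{pair-order} to a distance-threshold case analysis using the contrapositives of \eqref{uv-nonpos}--\eqref{uv-neg} together with the two-sided bound $\chi \leq \tilde\chi \leq \sup^r\chi$ from Theorem~\ref{th:main}, and handle (b) via $0 \in \hat U \cap \hat V$. One small omission: for the first inequality you only treat the case $\sup_{|y-x|\leq r}\sign\tilde u(y)=1$, but the case where this supremum equals $0$ also needs a line---if $\tilde u(y_0)=0$ for some $|y_0-x|\leq r$ then \eqref{uv-neg} still forces $\dist(y_0,\hat U)<r$, hence $\dist(x,\hat U)<2r$ and $\chi_u(x)\geq 0$, so $\tilde\chi_u(x)\geq\chi_u(x)\geq 0$; the third inequality needs the symmetric remark for $\tilde v$.
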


\begin{proof}
  The lemma was previously proved in \cite[Lemma~4.6]{GGP13AMSA}, using a different notation. For
  the reader's convenience, we present a self-contained proof using the new notion of facets.
  To simplify the notation, we write $\sup_{|y-x|\leq r} \chi(y)$ as $(\sup^r \chi)(x)$, and analogously for
  $\inf^r$.

  For (a), note that by construction from Theorem~\ref{th:main}, $\chi_u \leq \tilde \chi_u \leq \sup^r \chi_u$ and
  $\inf^r (-\chi_v) \leq -\tilde\chi_v \leq -\chi_v$.
  Therefore the first inequality in \eqref{pair-order} will follow from $\sup^r \sign \tilde u \leq
  \chi_u$, the second will follow from $\sup^r \chi_u \leq \inf^r (- \chi_v)$ and the third from
  $- \chi_v \leq \inf^r \sign \tilde v$.

  To show $\sup^r \chi_u \leq \inf^r (-\chi_v)$, we show the equivalent $\chi_u(w) \leq -\chi_v(z)$
  for all $|z - w| \leq 2r$. Fix therefore $w, z$ with $|z - w| \leq 2r$. When $\chi_u(w) = -1$,
  there is nothing to show since $-\chi_v \geq -1$. If $\chi_u(w) = 0$
  then by \eqref{chi_u_v} $\dist(w, \hat U) \leq 2r$, and by the triangle inequality $\dist(z, \hat U) \leq 4r \leq
  \lambda - r$ and therefore $-\chi_v(z) \geq 0 = \chi_u(w)$.
  Similarly, if $\chi_u(w) = 1$ then by \eqref{chi_u_v} $\dist(w, \hat V) > \lambda - r$, by the
  triangle inequality $\dist(z, \hat V) > \lambda - 3r \geq 2r$ and therefore $-\chi_v(z) = 1 =
  \chi_u(w)$. We conclude that $\chi_u(w) \leq \chi(z)$ for all $w, z$, $|z - w| \leq 2r$.

  To show $\sup^r \sign\tilde u \leq \chi_u$, fix $w, z$ with $|z - w| \leq r$.
  If $\tilde u(w) < 0$, then automatically $\sign\tilde u(w) = -1 \leq \chi_u(z)$.
  If $\tilde u(w) = 0$, then by \eqref{uv-neg} $\dist(w, \hat U) < r$. By the
  triangle inequality, $\dist(z, \hat U) < 2r$ and so by \eqref{chi_u_v} $\chi_u(z) \geq 0 =
  \sign \tilde u(w)$. Finally, if $\tilde u(w) > 0$, by \eqref{uv-nonpos} $\dist(w, \hat V) >
  \lambda$ and by \eqref{uv-neg} $\dist(w, \hat U) < r$. Therefore the triangle inequality implies
  $\dist(z, \hat U) < 2r$ and $\dist(z, \hat V) > \lambda - r$, and so by \eqref{chi_u_v}
  $\chi_u(z) = 1 = \sign\tilde u(w)$.
  The proof of $- \chi_v \leq \inf^r \sign \tilde v$ is analogous.

  For (b), recall that $0 \in \hat U \cap \hat V$. Therefore by the triangle inequality $\dist(w,
  \hat U) \leq r \leq 2r$ and $\dist(w, \hat V) \leq r \leq \lambda - r$ for any $|w| \leq r$. In
  particular $\chi_u(w) = 0$ by \eqref{chi_u_v}. Furthermore, for any $z$, $|z - w| \leq r$,
  $\dist(z, \hat U) \leq 2r$, $\dist(z, \hat V) \leq 2r \leq \lambda - r$ and therefore $\chi_u(z)
  =0$. We conlude that $0 = \chi_u(w) \leq \tilde \chi_u(w) \leq (\sup^r \chi_u)(w) = 0$, in
  particular, $w \in G_u$. An analogous argument implies $w \in G_v$.
\end{proof}

Let us now set $m := \sup \{\tilde u(w): \dist(w, \{\tilde \chi_u = -1\}) \leq \frac r2\}$
and define
\begin{align*}
  \psi_u(x) := \max\pth{\frac{2\sup \tilde u}r \dist(x, G_u) \tilde\chi_u(x), m}.
\end{align*}
Then due to the ordering in
Lemma~\ref{le:pair-properties}(a) and the fact that $\tilde u$ is a constant outside of a bounded
set, $m < 0$, and
$\tilde u(\cdot - w) \leq \psi_u$ for all $|w| \leq \frac r2$. Note that $\sign \psi_u =
\tilde\chi_u$. In a similar way we can construct $\psi_v$ such that
$\psi_v \leq \tilde v(\cdot - w)$ for all $|w| \leq \frac r2$.

The functions $\varphi_u(x,t) := \psi_u(x' - \hat x') + \xi_u(x'', t)$ and $\varphi_v := \psi_v(x' - \hat y')
+ \xi_v(x'', t)$ are admissible stratified faceted test functions at $(\hat x, \hat t)$ and $(\hat
y, \hat s)$, respectively, with slope $\hat p$, in the sense of
Definition~\ref{de:admissible-stratified-test-func}.
Since $u$ is bounded above and $\psi_u$ is bounded, by modifying and smoothly extending $\xi_u$ for $|x'' - \hat x''| > \frac
\delta2$, $|t - \hat t| > \frac \delta2$, we can assume that $u - \varphi_u(\cdot - h)$ has a
global maximum at $(\hat x, \hat t)$ in $Q$ for all sufficiently small $h'$, $|h'| \leq \frac r2$, and $h'' = 0$.
A similar reasoning applies to $\varphi_v$. Therefore $\varphi_u$ and $\varphi_v$ are test functions
in the sense of Definition~\ref{def:visc-solution}.

From the definition of viscosity solutions, Definition~\ref{def:visc-solution}, we infer that for
some $\tilde r \in (0, r)$,
\begin{align}
  \label{contact-ineq}
  \begin{aligned}
    (\xi_u)_t(\hat t) + F\pth{\hat p, \essinf_{B_{\tilde r}(0)} \left[ \Lambda_{\hat p}[\psi_u] \right]} &\leq 0,\\
    (\xi_v)_t(\hat s) + F\pth{\hat p, \esssup_{B_{\tilde r}(0)} \left[ \Lambda_{\hat p}[\psi_v] \right]} &\geq 0.
  \end{aligned}
\end{align}
On the other hand,
Lemma~\ref{le:pair-properties}(a--b) and the comparison principle
Proposition~\ref{pr:Lambda-comparison} imply
\begin{align}
\label{essinf sup order}
\essinf_{B_{\tilde r}(0)} \left[ \Lambda_{\hat p}[\psi_u] \right] \leq \esssup_{B_{\tilde r}(0)} \left[
\Lambda_{\hat p}[\psi_v] \right],
\end{align}
and therefore by subtracting the inequalities in \eqref{contact-ineq} and applying
the ellipticity of $F$ we obtain
\begin{align*}
  0 < \frac \e{(T - \hat t)^2} + \frac \e{(T - \hat s)^2} + F\pth{\hat p, \essinf_{B_{\tilde r}(0)} \left[
  \Lambda_{\hat p}[\psi_u] \right]} - F\pth{\hat p, \esssup_{B_{\tilde r}(0)} \left[ \Lambda_{\hat p}[\psi_v] \right]}
\leq 0,
\end{align*}
a contradiction.
This finishes the proof of the comparison principle, Theorem~\ref{th:comparison-principle-intro}.

\subsection{Stability and existence of solutions}

With the comparison principle, Theorem~\ref{th:comparison-principle-intro}, valid in any dimension, the stability with respect to approximation
by regularized problems, Theorem~\ref{th:stability-intro}, and the well-posedness,
Theorem~\ref{th:well-posedness-intro}, proved originally in
\cite{GP16} immediately generalize to all dimensions. Let us give a brief outline of the approach,
for all details see \cite{GP16}. The argument in the simplified form presented below basically first appeared
in \cite{GGP14JMPA}.

In the standard viscosity theory, the existence of solutions usually follows from Perron's method:
the largest subsolution (or the smallest supersolution) turns out to be a solution of the problem,
see
\cite{CIL,G06}. However, it is not clear whether Perron's method can be used for the crystalline
curvature problem \eqref{level-set} due to the very strong nonlocality of the curvature operator,
except in one dimension \cite{GG98ARMA,GG01} when the speed of a facet is constant. In case when
the speed of the facet is not constant, this approach seems to be difficult except in a one-dimensional setting, where Perron's method is applied to construct a graph-like solution with
non-uniform driving force term by careful classification of speed profile of each facet \cite{GGN}. To be more specific,
the standard shift of a faceted test function to create a larger subsolution when the largest subsolution
fails to be a supersolution and reach a contradiction (Ishii's shift) cannot be performed unless
the crystalline curvature $\Lambda$ is constant on the facet. Therefore we use the stability of solutions with respect to
regularization of $W$ in place of
Perron's method.

\subsubsection{Stability}
We consider two modes of regularization of $W$:
\begin{enumerate}
  \item $W_m \in C^2(\Rn)$, $a_m^{-1} \leq \nabla^2 W_m \leq a_m$ for some $a_m > 0$, and $W_m
    \searrow W$, or
  \item $W_m$ are smooth anisotropies, that is, $W_m$ is an anisotropy, $W_m \in C^2(\Rn \setminus \set0)$ and $\set{p:
    W_m(p) \leq 1}$ is strictly convex, such that $W_m \to W$ locally uniformly.
\end{enumerate}
The regularization (a) yields a sequence of degenerate parabolic problems that are within the
classical viscosity theory \cite{CIL}, while (b) regularizes the crystalline curvature operator by
smooth anisotropic curvatures studied in \cite{CGG} when $F$ comes from a level set formulation, or
in \cite{GGP14JMPA,GGP13AMSA} for general $F$.

Note that both regularizations produce \emph{local}
problems, except at $\nabla u_m = 0$ in (b) .
The main difficulty in proving the stability property with respect to the approximation is
therefore again caused by the nonlocality of the crystalline curvature operator.
In the limit $m \to \infty$, the nonlocal information contained in $\Lambda$ must be
recovered. This is achieved with the help of a variant of the perturbed test function method.

Suppose therefore that we approximate $W$ by a sequence of regular $W_m$ as in (a) above and obtain a sequence of solutions
$u_m$ of the regularized problems. We want to show that $u(x, t) = \limsup_{(y, s, m) \to
(x, t, \infty)} u_m(y,s)$ is a subsolution of \eqref{lse} by verifying
Definition~\ref{def:visc-solution}. Let $\varphi$ be an admissible stratified faceted
test function at a point $(\hat x, \hat t)$ with slope $\hat p$ satisfying the assumptions in
Definition~\ref{def:visc-solution}. We need to show \eqref{visc_subsolution}.

To simplify the explanation, let us assume that $k = n$ so that $\hat p = 0$, $Z_1 = \Rn$ and
$\varphi(x,t) = \psi(x) + g(t)$, where $\psi \in Lip(\Rn)$ is $0$-admissible support function.
By definition, $(A, \chi) := (\set{\psi = 0}, \sign \psi)$ is a $(\SW_0)^\circ$-($L^2$)
Cahn-Hoffman facet.
For the treatment of the general case, see the details in \cite{GP16}.
By adding constants and translation, we can assume that $(\hat x, \hat t) = (0, 0)$ and $u(\hat x,
\hat t) = u(0,0) = 0$, $g(\hat t) = g(0) = 0$.
By Definition~\ref{def:visc-solution}, we may assume that $u - \varphi(\cdot - h)$ has a global
maximum at $(\hat x, \hat t) = (0,0)$ for all $|h| \leq \rho$ for some $\rho > 0$, and $\psi(x) = 0
\Leftrightarrow x \in A$ for
$|x| \leq \rho$.

There are now a few difficulties with trying to follow the standard stability argument for viscosity
solutions. The first is the smoothness of $\varphi$. We need at least $C^2$-regularity in space to be
able to use $\varphi$ as a test function for the approximate problems called $m$-problems. Let us
therefore assume that $\varphi$ is in fact smooth so that this is not an issue. The second problem
arises when we try to find a subsequence $m_l \to \infty$ and points of maxima $(x_l, t_l)$ of $u_{m_l} -
\varphi$ such that $(x_l, t_l) \to (\hat x, \hat t) = (0,0)$. Such a sequence exists in general
only if $u - \varphi$ has a \emph{strict} maximum at $(0, 0)$. In the standard argument, this is
ensured by a smooth perturbation of $\varphi$, for instance by adding a term like $|x|^4 + |t|^2$
to $\varphi$.
Suppose that we have such a sequence $(x_{m_l}, t_{m_l})$ of maxima converging to $(0,0)$. Then by
the definition of the viscosity solution of the $m$-problem, we have
\begin{align*}
  [\varphi_t + F(\nabla \varphi, \divo(\nabla W_{m_l})(\nabla \varphi))](x_{m_l}, t_{m_l}) \leq 0,
  \qquad \text{for all $l$}.
\end{align*}
As $\divo(\nabla W_m)(\nabla \varphi) = \trace [(\nabla^2 W_m)(\nabla \varphi)\nabla^2 \varphi]$
and $\nabla \varphi(0, 0) = 0$, there is little hope that this local quantity will converge to anything
useful due to the singularity of $W$ at $0$, much less to $\Lambda_{\hat p}[\psi](\hat x)$, which
is nonlocal.

The key idea is to introduce a uniform perturbation of $\varphi$ that depends on $m$, so that
it captures the necessary nonlocal information. This basic scheme was introduced with great success in
the viscosity theory by Evans \cite{Evans89}, where it is called the perturbed test function
method.
In fact, this idea was actually carried out in the one-dimensional setting, where the test
function is taken essentially as $W^\circ_m$ \cite{GG1a}.
However, in higher dimensional case, one has to test with more functions, which requires a new idea
for a choice of test functions depending on $m$. Such an idea has first appeared in
\cite{GGP14JMPA}. We shall sketch it below.

As $-\Lambda_{\hat p}(\psi)$ coincides on the facet with the minimizing element of the
subdifferential of the anisotropic total variation energy at $\psi$, it can be approximated by its
resolvent problem. This problem is equivalent to performing one step of the implicit Euler
discretization of the anisotropic total variation flow. To have compactness, we modify $\psi$ far
away from them facet and rescale so that it is $\Z^n$-periodic.  For given $a > 0$, we find the
unique solutions $\psi_a, \psi_{a,m} \in L^2(\T^n)$ of the resolvent problems
\begin{align}
  \label{resolvent-problems}
  \frac{\psi_{a,m} - \psi}a &\in - \partial E_m(\psi_{a,m}), & \frac{\psi_{a  } - \psi}a &\in - \partial E(\psi_{a  }),
\end{align}
where $E$ and $E_m$ are the energies defined in \eqref{TV-energy} with $\Omega = \T^n = \Rn /
\Z^n$.
It is possible to modify $\psi$ away from the facet in such a way that $\psi \in Lip(\T^n)$ and
$\partial E(\psi) \neq \emptyset$ since $\psi$ is a $0$-admissible support function.

The resolvent problems have a number of very useful properties. Since $W_m$ are smooth, the problem
for $\psi_{a,m}$ is a quasilinear elliptic problem. Therefore $\psi_{a,m} \in C^2(\Tn)$ by the elliptic
regularity.
Moreover, due to the monotone convergence of $W_m \to W$, $E_m$ converges in Mosco sense to $E$,
which implies the resolvent convergence $\psi_{a,m} \to \psi_a$ in $L^2(\Tn)$ as $m \to \infty$
\cite{Attouch}. By the comparison principle and the translation invariance of the resolvent
problems \eqref{resolvent-problems}, $\norm{\nabla \psi_{a,m}}_\infty \leq \norm{\nabla
\psi}_\infty$. Therefore $\psi_{a,m} \to \psi_a$ uniformly and $\norm{\nabla \psi_a}_\infty \leq
\norm{\nabla \psi}_\infty$.
Finally, since $\partial E(\psi) \neq \emptyset$, $\psi_a \to \psi$ uniformly and $\frac{\psi_a - \psi}a \to - \partial^0
E(\psi)$ in $L^2(\Tn)$ as $a \to 0+$. Recall that $\Lambda_0[\psi] = -\partial^0 E(\psi)$ on the
facet of $\psi$.
This construction yields perturbed test functions $\varphi_{a,m}(x,t) = \psi_{a,m}(x) + g(t)$ and
$\varphi_a(x,t) = \psi_a(x) + g(t)$.

We now turn our attention to a compact neighborhood of the facet of $\psi$,
\begin{align*}
O := \set{x: \dist(x, A) \leq \rho}.
\end{align*}
We assume that we have modified $\psi$ above only far away from the facet $(A,
\chi)$ so that the value of $\psi(\cdot - w)$ does not change on $O$ for all $|w| \leq \rho$.
For convenience, we define
\begin{align*}
  \bar u(x) := \sup_{|t| \leq \rho} u(x, t) - g(t).
\end{align*}
Note that $\bar u - \psi(\cdot - h) \leq 0$ on $O$ for $|w| \leq \rho$, with equality at $x = 0$.

We set $\delta := \frac \rho5$ and define the critical set
\begin{align*}
  N := \set{x \in O: \bar u(x) \geq 0, \psi(x - w) \leq 0 \text{ for some } |w| \leq \delta}.
\end{align*}
We can deduce that \cite[Corollary~8.3]{GP16}
\begin{align*}
  \bar u(x)& \leq 0, \quad \psi(x - z) \geq 0, \qquad \text{for all } \dist(x, N) \leq 3 \delta, |z| \leq
  \delta,\\
  \dist(N, \partial O) &\geq 4\delta.
\end{align*}
In particular, we see that for any $|z| \leq \delta$ and any $\alpha > 0$ all maxima of $\bar u -
\alpha \psi(\cdot - z)$ in $\set{x: \dist(x, N) \leq 3\delta}$ lie in $N$.
We can therefore make the Lipschitz constant $\norm{\nabla \psi}_\infty$ arbitrarily small by
multiplying $\psi$ by small $\alpha > 0$ in the sequel.
By adding $|t|^2$ to $g(t)$ if necessary, we may assume that all maxima of $u - \varphi(\cdot - z,
\cdot)$ are located at $t = 0$.

For $a > 0$ let us fix $z_a$ such that $|z_a| \leq \delta$ and $\psi_a(z_a) = \min_{|w| \leq
\delta} \psi_a(w)$. This choice will become important later.

By the above consideration and the uniform convergence of $\psi_a \to \psi$, there exists $a_0 > 0$ such that all the maxima of $u
- \varphi_a(\cdot - z_a, \cdot)$ in $M^{3\delta}$ lie in $M^\delta$, where $M^s := \set{(x,t):
\dist(x, N) \leq s, |t| \leq s}$.

Now for every $a \in (0, a_0)$, by the uniform convergence of $\psi_{a,m} \to \psi_a$ and properties
of half-relaxed limits, there exist a point $(x_a, t_a) \in M^\delta$ of maximum of $u -
\varphi_a(\cdot - z_a, \cdot)$ and sequences $m_l \to \infty$, $(x_l, t_l) \to (x_a, t_a)$ as $l
\to \infty$, where $(x_l, t_l)$ is a point of maximum of $u_{m_l} - \varphi_{a,m_l}(x_l - z_a,
t_l)$. By the uniform Lipschitz continuity of $\psi_{a,m}$, we can assume that there exists $p_a
\in \Rn$,
$|p_a|\leq \norm{\nabla \psi}_\infty$, such that $\nabla
\psi_{a,m_l}(x_{m_l}) \to p_a$ as $l \to \infty$.
Since $\varphi_{a,m}$ are smooth, the definition of viscosity solution $u_m$ implies
\begin{align*}
  g'(t_l) + F(\nabla \psi_{a,m_l}, \trace [(\nabla^2 W_{m_l})(\nabla \psi_{a, m_l}) \nabla^2
  \psi_{a, m_l}]) (x_l - z_a) \leq 0.
\end{align*}
Since the trace operator is just $\frac{\psi_{a,m_l} - \psi}a$, the uniform convergence implies
in the limit $l \to \infty$
\begin{align*}
  g'(t_a) + F\big(p_a, \frac{\psi_a - \psi}a (x_a - z_a)\big) \leq 0.
\end{align*}
$z_a$ was chosen above in such a way that a geometric lemma, \cite[Lemma~8.5]{GP16}, implies
\begin{align*}
  \frac{\psi_a - \psi}a (x_a - z_a) \leq \min_{|w| \leq \delta} \frac{\psi_a - \psi}a(w).
\end{align*}
Monotonicity of $F$ in the second variable thus yields
\begin{align*}
  g'(t_a) + F\big(p_a, \min_{|w|\leq \delta}\frac{\psi_a - \psi}a (w)\big) \leq 0.
\end{align*}
Note that $t_a \to 0$ as $a \to 0+$.
By finding a sequence of $a_j \to 0+$ such that there exist $p \in \Rn$, $|p| \leq \norm{\nabla
\psi}_\infty$ with $p_{a_j} \to p$ and $\lim_{j \to \infty} \min_{|w|\leq \delta}\frac{\psi_{a_j} -
\psi}{a_j} (w) =
\liminf_{a \to 0+} \min_{|w|\leq \delta}\frac{\psi_a - \psi}a (w)$, we deduce that
\begin{align*}
  g'(0) + F(p, \liminf_{a \to 0+} \min_{|w|\leq \delta}\frac{\psi_a - \psi}a (w)) \leq 0.
\end{align*}
Since $\frac{\psi_a - \psi}a \to \Lambda_0[\psi](w)$ in $L^2(B_\delta(0))$, we conclude that
\begin{align*}
\essinf_{|w| \leq
\delta}
\Lambda_0[\psi](w) \geq \liminf_{a \to 0+} \min_{|w|\leq \delta}\frac{\psi_a - \psi}a (w).
\end{align*}
The monotonicity of $F$ therefore yields
\begin{align*}
  g'(0) + F(p, \essinf_{|w| \leq \delta} \Lambda_0[\psi](w)) \leq 0.
\end{align*}
Finally, we recall that we can assume that $|p| \leq \norm{\nabla \psi}_\infty$ is arbitrarily
small. Continuity of $F$ in the first variable yield the final conclusion that
\eqref{visc_subsolution} is satisfied.

The full argument is more technically involved. In particular, we need to decompose the space $\Rn$
into the direct sum of $Z_1$ and $Z_2$ as explained in Section~\ref{sec:visc-sol}, and treat them
independently. In essence, the resolvents $\frac{\psi_a - \psi}a$ do not depend
on the directions parallel to $Z_2$, \cite[Lemma~3.9]{GP16}, and therefore we can treat these
directions as we treat time in the above simplified argument.
A symmetric argument implies that $u(x,t) = \liminf_{(y,s,m) \to (x, t, \infty)} u_m$  is a
viscosity supersolution.

\medskip

To address the stability with respect to positively one-homogeneous approximations $W_m$ in (b)
above, we approximate $W_m$ by a sequence $W_m^\delta$ as in (a) and modify the previous stability
argument. In particular, we have solutions $u^\delta_m$ besides $u_m$, and $u_m^\delta \rightrightarrows
u_m$ locally uniformly as $\delta \to 0+$ due to the stability results in \cite{GGP14JMPA,GGP13AMSA}.

\subsubsection{Existence}
Existence of solutions of the equation \eqref{lse} is now a standard consequence of the stability
property. Fixing initial data $u_0$ and an approximating sequence of one-homogeneous $W_m$ so that the stability
result Theorem~\ref{th:stability-intro} holds, we get a sequence of viscosity solutions of
\eqref{lse} with the anisotropy $W_m$ and initial data $u_0$. The existence of such solutions
follows from the standard theory of viscosity solutions \cite{CGG,G06} if $F$ comes from the level
set formulation of \eqref{curvature-flow}, or from \cite{GGP13AMSA} in the general case. By the stability result
Theorem~\ref{th:stability-intro}, the half-relaxed limits $\bar u(x, t) = \limsup_{(y, s, m) \to
(x, t, \infty)} u_m(y,s)$ and $\underline u(x, t) = \liminf_{(y, s, m) \to (x, t, \infty)}
u_m(y,s)$ are respectively a subsolution and a supersolution of \eqref{lse}, without the initial
data. Clearly $\underline u \leq \bar u$. To show the other inequality, we use the comparison
theorem Theorem~\ref{th:comparison-principle-intro} after we show that $\bar u$ and $\underline u$ attain
the correct initial data $u_0$ and that they are equal to a constant outside of a compact set at
each time. This can be done via the comparison principle for the regularized problems in a rather standard way using translations of the barriers for the
solutions $u_m$ of the type
$(x, t) \mapsto \min(\max(W_m^\circ(x), c_1), c_2) + c t$ for appropriate constants $c_1, c_2, c$, where $W_m^\circ$ is the polar of $W_m$.
Since $W_m \to W$ locally uniformly, $W_m^\circ \to W^\circ$ locally uniformly as well. The existence part of the
well-posedness theorem, Theorem~\ref{th:well-posedness-intro}, is established. The uniqueness is a
direct consequence of the comparison principle, Theorem~\ref{th:comparison-principle-intro}.

\subsection*{Acknowledgments}
The work of the first author is partly supported by Japan
Society for the Promotion of Science (JSPS) through grants No. 26220702
(Kiban S) and No. 16H03948 (Kiban B).
The work of the second author is partially supported by JSPS KAKENHI Grant No. 26800068 (Wakate B).

\section*{References}
\begin{biblist}

\bib{ACM}{book}{
   author={Andreu-Vaillo, Fuensanta},
   author={Caselles, Vicent},
   author={Maz{\'o}n, Jos{\'e} M.},
   title={Parabolic quasilinear equations minimizing linear growth
   functionals},
   series={Progress in Mathematics},
   volume={223},
   publisher={Birkh\"auser Verlag, Basel},
   date={2004},
   pages={xiv+340},
   isbn={3-7643-6619-2},
   review={\MR{2033382 (2005c:35002)}},
   doi={10.1007/978-3-0348-7928-6},
}

\bib{AG89}{article}{
   author={Angenent, Sigurd},
   author={Gurtin, Morton E.},
   title={Multiphase thermomechanics with interfacial structure. II.\
   Evolution of an isothermal interface},
   journal={Arch. Rational Mech. Anal.},
   volume={108},
   date={1989},
   number={4},
   pages={323--391},
   issn={0003-9527},
   review={\MR{1013461 (91d:73004)}},
   doi={10.1007/BF01041068},
}

\bib{Anzellotti}{article}{
   author={Anzellotti, Gabriele},
   title={Pairings between measures and bounded functions and compensated
   compactness},
   journal={Ann. Mat. Pura Appl. (4)},
   volume={135},
   date={1983},
   pages={293--318 (1984)},
   issn={0003-4622},
   review={\MR{750538 (85m:46042)}},
   doi={10.1007/BF01781073},
}

\bib{Attouch}{book}{
   author={Attouch, H.},
   title={Variational convergence for functions and operators},
   series={Applicable Mathematics Series},
   publisher={Pitman (Advanced Publishing Program)},
   place={Boston, MA},
   date={1984},
   pages={xiv+423},
   isbn={0-273-08583-2},
   review={\MR{773850 (86f:49002)}},
}

\bib{B10}{article}{
   label={B1},
   author={Bellettini, G.},
   title={An introduction to anisotropic and crystalline mean curvature flow},
   journal={Hokkaido Univ. Tech. Rep. Ser. in Math.},
   volume={145},
   date={2010},
   pages={102--162},
}

\bib{BCCN06}{article}{
   author={Bellettini, Giovanni},
   author={Caselles, Vicent},
   author={Chambolle, Antonin},
   author={Novaga, Matteo},
   title={Crystalline mean curvature flow of convex sets},
   journal={Arch. Ration. Mech. Anal.},
   volume={179},
   date={2006},
   number={1},
   pages={109--152},
   issn={0003-9527},
   review={\MR{2208291 (2007a:53126)}},
   doi={10.1007/s00205-005-0387-0},
}

\bib{BCCN09}{article}{
   author={Bellettini, Giovanni},
   author={Caselles, Vicent},
   author={Chambolle, Antonin},
   author={Novaga, Matteo},
   title={The volume preserving crystalline mean curvature flow of convex
   sets in $\mathbb R^N$},
   journal={J. Math. Pures Appl. (9)},
   volume={92},
   date={2009},
   number={5},
   pages={499--527},
   issn={0021-7824},
   review={\MR{2558422 (2011b:53155)}},
   doi={10.1016/j.matpur.2009.05.016},
}

\bib{BGN00}{article}{
   author={Bellettini, G.},
   author={Goglione, R.},
   author={Novaga, M.},
   title={Approximation to driven motion by crystalline curvature in two
   dimensions},
   journal={Adv. Math. Sci. Appl.},
   volume={10},
   date={2000},
   number={1},
   pages={467--493},
   issn={1343-4373},
   review={\MR{1769163 (2001i:53109)}},
}

\bib{BN00}{article}{
   author={Bellettini, G.},
   author={Novaga, M.},
   title={Approximation and comparison for nonsmooth anisotropic motion by
   mean curvature in ${\bf R}^N$},
   journal={Math. Models Methods Appl. Sci.},
   volume={10},
   date={2000},
   number={1},
   pages={1--10},
   issn={0218-2025},
   review={\MR{1749692 (2001a:53106)}},
   doi={10.1142/S0218202500000021},
}

\bib{BNP99}{article}{
   author={Bellettini, G.},
   author={Novaga, M.},
   author={Paolini, M.},
   title={Facet-breaking for three-dimensional crystals evolving by mean
   curvature},
   journal={Interfaces Free Bound.},
   volume={1},
   date={1999},
   number={1},
   pages={39--55},
   issn={1463-9963},
   review={\MR{1865105 (2003i:53099)}},
   doi={10.4171/IFB/3},
}

\bib{BNP01a}{article}{
   author={Bellettini, G.},
   author={Novaga, M.},
   author={Paolini, M.},
   title={On a crystalline variational problem. I. First variation and
   global $L^\infty$ regularity},
   journal={Arch. Ration. Mech. Anal.},
   volume={157},
   date={2001},
   number={3},
   pages={165--191},
   issn={0003-9527},
   review={\MR{1826964 (2002c:49072a)}},
   doi={10.1007/s002050010127},
}

\bib{BNP01b}{article}{
   author={Bellettini, G.},
   author={Novaga, M.},
   author={Paolini, M.},
   title={On a crystalline variational problem. II. $BV$ regularity and
   structure of minimizers on facets},
   journal={Arch. Ration. Mech. Anal.},
   volume={157},
   date={2001},
   number={3},
   pages={193--217},
   issn={0003-9527},
   review={\MR{1826965 (2002c:49072b)}},
   doi={10.1007/s002050100126},
}

\bib{BNP01IFB}{article}{
   author={Bellettini, G.},
   author={Novaga, M.},
   author={Paolini, M.},
   title={Characterization of facet breaking for nonsmooth mean curvature
   flow in the convex case},
   journal={Interfaces Free Bound.},
   volume={3},
   date={2001},
   number={4},
   pages={415--446},
   issn={1463-9963},
   review={\MR{1869587 (2002k:53127)}},
   doi={10.4171/IFB/47},
}

\bib{BP96}{article}{
   author={Bellettini, G.},
   author={Paolini, M.},
   title={Anisotropic motion by mean curvature in the context of Finsler
   geometry},
   journal={Hokkaido Math. J.},
   volume={25},
   date={1996},
   number={3},
   pages={537--566},
   issn={0385-4035},
   review={\MR{1416006 (97i:53079)}},
   doi={10.14492/hokmj/1351516749},
}

\bib{B78}{book}{
  label={B2},
   author={Brakke, Kenneth A.},
   title={The motion of a surface by its mean curvature},
   series={Mathematical Notes},
   volume={20},
   publisher={Princeton University Press, Princeton, N.J.},
   date={1978},
   pages={i+252},
   isbn={0-691-08204-9},
   review={\MR{485012 (82c:49035)}},
}

\bib{Br71}{article}{
  label={B3},
   author={Br{\'e}zis, Ha{\"{\i}}m},
   title={Monotonicity methods in Hilbert spaces and some applications to
   nonlinear partial differential equations},
   conference={
      title={Contributions to nonlinear functional analysis},
      address={Proc. Sympos., Math. Res. Center, Univ. Wisconsin, Madison,
      Wis.},
      date={1971},
   },
   book={
      publisher={Academic Press, New York},
   },
   date={1971},
   pages={101--156},
   review={\MR{0394323}},
}

\bib{Br73}{book}{
  label={B4},
   author={Br{\'e}zis, H.},
   title={Op\'erateurs maximaux monotones et semi-groupes de contractions
   dans les espaces de Hilbert},
   language={French},
   note={North-Holland Mathematics Studies, No. 5. Notas de Matem\'atica
   (50)},
   publisher={North-Holland Publishing Co., Amsterdam-London; American
   Elsevier Publishing Co., Inc., New York},
   date={1973},
   pages={vi+183},
   review={\MR{0348562}},
}

\bib{CasellesChambolle06}{article}{
   author={Caselles, Vicent},
   author={Chambolle, Antonin},
   title={Anisotropic curvature-driven flow of convex sets},
   journal={Nonlinear Anal.},
   volume={65},
   date={2006},
   number={8},
   pages={1547--1577},
   issn={0362-546X},
   review={\MR{2248685 (2007d:35143)}},
   doi={10.1016/j.na.2005.10.029},
}

\bib{Chambolle}{article}{
   author={Chambolle, Antonin},
   title={An algorithm for mean curvature motion},
   journal={Interfaces Free Bound.},
   volume={6},
   date={2004},
   number={2},
   pages={195--218},
   issn={1463-9963},
   review={\MR{2079603}},
   doi={10.4171/IFB/97},
}

\bib{CMNP}{article}{
  author = {Chambolle, Antonin},
  author = {Morini, Massimiliano},
  author={Novaga, M.},
  author = {Ponsiglione, Marcello},
  title = {Existence and uniqueness for anisotropic and crystalline mean curvature flows},
  eprint={https://arxiv.org/abs/1702.03094},
}

\bib{CMP}{article}{
  author = {Chambolle, Antonin},
  author = {Morini, Massimiliano},
  author = {Ponsiglione, Marcello},
  title = {Existence and Uniqueness for a Crystalline Mean Curvature Flow},
  journal = {Communications on Pure and Applied Mathematics},
  issn = {1097-0312},
  eprint = {http://dx.doi.org/10.1002/cpa.21668},
  doi = {10.1002/cpa.21668},
}

\bib{CGG}{article}{
   author={Chen, Yun Gang},
   author={Giga, Yoshikazu},
   author={Goto, Shun'ichi},
   title={Uniqueness and existence of viscosity solutions of generalized
   mean curvature flow equations},
   journal={J. Differential Geom.},
   volume={33},
   date={1991},
   number={3},
   pages={749--786},
   issn={0022-040X},
   review={\MR{1100211 (93a:35093)}},
}

\bib{CIL}{article}{
   author={Crandall, Michael G.},
   author={Ishii, Hitoshi},
   author={Lions, Pierre-Louis},
   title={User's guide to viscosity solutions of second order partial
   differential equations},
   journal={Bull. Amer. Math. Soc. (N.S.)},
   volume={27},
   date={1992},
   number={1},
   pages={1--67},
   issn={0273-0979},
   review={\MR{1118699 (92j:35050)}},
   doi={10.1090/S0273-0979-1992-00266-5},
}

\bib{Evans89}{article}{
   author={Evans, Lawrence C.},
   title={The perturbed test function method for viscosity solutions of
   nonlinear PDE},
   journal={Proc. Roy. Soc. Edinburgh Sect. A},
   volume={111},
   date={1989},
   number={3-4},
   pages={359--375},
   issn={0308-2105},
   review={\MR{1007533 (91c:35017)}},
   doi={10.1017/S0308210500018631},
}

\bib{ES}{article}{
   author={Evans, L. C.},
   author={Spruck, J.},
   title={Motion of level sets by mean curvature. I},
   journal={J. Differential Geom.},
   volume={33},
   date={1991},
   number={3},
   pages={635--681},
   issn={0022-040X},
   review={\MR{1100206 (92h:35097)}},
}

\bib{G06}{book}{
   author={Giga, Yoshikazu},
   title={Surface evolution equations - a level set approach},
   series={Monographs in Mathematics},
   volume={99},
   note={(earlier version: Lipschitz Lecture Notes \textbf{44}, University of Bonn, 2002)},
   publisher={Birkh\"auser Verlag, Basel},
   date={2006},
   pages={xii+264},
   isbn={978-3-7643-2430-8},
   isbn={3-7643-2430-9},
   review={\MR{2238463 (2007j:53071)}},
}

\bib{GG98ARMA}{article}{
   author={Giga, Mi-Ho},
   author={Giga, Yoshikazu},
   title={Evolving graphs by singular weighted curvature},
   journal={Arch. Rational Mech. Anal.},
   volume={141},
   date={1998},
   number={2},
   pages={117--198},
   issn={0003-9527},
   review={\MR{1615520 (99j:35118)}},
}

\bib{GG1a}{article}{
   author={Giga, Mi-Ho},
   author={Giga, Yoshikazu},
   title={Stability for evolving graphs by nonlocal weighted curvature},
   journal={Comm. Partial Differential Equations},
   volume={24},
   date={1999},
   number={1-2},
   pages={109--184},
   issn={0360-5302},
   review={\MR{1671993}},
   doi={10.1080/03605309908821419},
}

\bib{GG01}{article}{
   author={Giga, Mi-Ho},
   author={Giga, Yoshikazu},
   title={Generalized motion by nonlocal curvature in the plane},
   journal={Arch. Ration. Mech. Anal.},
   volume={159},
   date={2001},
   number={4},
   pages={295--333},
   issn={0003-9527},
   review={\MR{1860050 (2002h:53117)}},
   doi={10.1007/s002050100154},
}

\bib{GGN}{article}{
   author={Giga, Mi-Ho},
   author={Giga, Yoshikazu},
   author={Nakayasu, Atsushi},
   title={On general existence results for one-dimensional singular
   diffusion equations with spatially inhomogeneous driving force},
   conference={
      title={Geometric partial differential equations},
   },
   book={
      series={CRM Series},
      volume={15},
      publisher={Ed. Norm., Pisa},
   },
   date={2013},
   pages={145--170},
   review={\MR{3156893}},
   doi={10.1007/978-88-7642-473-1\_8},
}

\bib{GGP13AMSA}{article}{
   author={Giga, Mi-Ho},
   author={Giga, Yoshikazu},
   author={Po{\v{z}}{\'a}r, Norbert},
   title={Anisotropic total variation flow of non-divergence type on a
   higher dimensional torus},
   journal={Adv. Math. Sci. Appl.},
   volume={23},
   date={2013},
   number={1},
   pages={235--266},
   issn={1343-4373},
   isbn={978-4-7625-0665-9},
   review={\MR{3155453}},
}
\bib{GGP14JMPA}{article}{
   author={Giga, Mi-Ho},
   author={Giga, Yoshikazu},
   author={Po{\v{z}}{\'a}r, Norbert},
   title={Periodic total variation flow of non-divergence type in
   $\mathbb{R}^n$},
   language={English, with English and French summaries},
   journal={J. Math. Pures Appl. (9)},
   volume={102},
   date={2014},
   number={1},
   pages={203--233},
   issn={0021-7824},
   review={\MR{3212254}},
   doi={10.1016/j.matpur.2013.11.007},
}

\bib{GP16}{article}{
   author={Giga, Yoshikazu},
   author={Po{\v{z}}{\'a}r, Norbert},
   title={A level set crystalline mean curvature flow of surfaces},
   journal={Adv. Differential Equations},
   volume={21},
   date={2016},
   number={7-8},
   pages={631--698},
   issn={1079-9389},
   review={\MR{3493931}},
}

\bib{Il93}{article}{
   author={Ilmanen, Tom},
   title={Convergence of the Allen-Cahn equation to Brakke's motion by mean
   curvature},
   journal={J. Differential Geom.},
   volume={38},
   date={1993},
   number={2},
   pages={417--461},
   issn={0022-040X},
   review={\MR{1237490 (94h:58051)}},
}

\bib{Ishii14}{article}{
   author={Ishii, Katsuyuki},
   title={An approximation scheme for the anisotropic and nonlocal mean
   curvature flow},
   journal={NoDEA Nonlinear Differential Equations Appl.},
   volume={21},
   date={2014},
   number={2},
   pages={219--252},
   issn={1021-9722},
   review={\MR{3180882}},
   doi={10.1007/s00030-013-0244-z},
}

\bib{Komura67}{article}{
   author={K{\=o}mura, Yukio},
   title={Nonlinear semi-groups in Hilbert space},
   journal={J. Math. Soc. Japan},
   volume={19},
   date={1967},
   pages={493--507},
   issn={0025-5645},
   review={\MR{0216342 (35 \#7176)}},
}

\bib{LMM}{article}{
  author={Lasica, Michal},
  author={Moll, Salvador},
  author={Mucha, Piotr B.},
  title={Total variation denoising in $l^1$ anisotropy},
  journal={arXiv:1611.03261},
  status={preprint},
  eprint={https://arxiv.org/abs/1611.03261},
}

\bib{Moll}{article}{
   author={Moll, J. S.},
   title={The anisotropic total variation flow},
   journal={Math. Ann.},
   volume={332},
   date={2005},
   number={1},
   pages={177--218},
   issn={0025-5831},
   review={\MR{2139257 (2006d:35113)}},
   doi={10.1007/s00208-004-0624-0},
}

\bib{OS}{article}{
   author={Osher, Stanley},
   author={Sethian, James A.},
   title={Fronts propagating with curvature-dependent speed: algorithms
   based on Hamilton-Jacobi formulations},
   journal={J. Comput. Phys.},
   volume={79},
   date={1988},
   number={no.~1},
   pages={12--49},
   issn={0021-9991},
   review={\MR{965860}},
}

\bib{Rockafellar}{book}{
   author={Rockafellar, R. Tyrrell},
   title={Convex analysis},
   series={Princeton Mathematical Series, No. 28},
   publisher={Princeton University Press, Princeton, N.J.},
   date={1970},
   pages={xviii+451},
   review={\MR{0274683 (43 \#445)}},
}

\bib{S}{article}{
   author={Soner, Halil Mete},
   title={Motion of a set by the curvature of its boundary},
   journal={J. Differential Equations},
   volume={101},
   date={1993},
   number={2},
   pages={313--372},
   issn={0022-0396},
   review={\MR{1204331}},
   doi={10.1006/jdeq.1993.1015},
}

\bib{TT}{article}{
   author={Takasao, K.},
   author={Tonegawa, Y.},
   title={Existence and regularity of mean curvature flow with transport term in higher dimension},
   status={to appear in Math. Annalen},
   eprint={http://arxiv.org/abs/1307.6629},
}

\bib{T91}{article}{
   author={Taylor, Jean E.},
   title={Constructions and conjectures in crystalline nondifferential
   geometry},
   conference={
      title={Differential geometry},
   },
   book={
      title={Proceedings of the Conference on Differential Geometry, Rio de Janeiro},
      editor={Lawson, B.},
      editor={Tanenblat, K.},
      series={Pitman Monogr. Surveys Pure Appl. Math.},
      volume={52},
      publisher={Longman Sci. Tech., Harlow},
   },
   date={1991},
   pages={321--336},
   review={\MR{1173051 (93e:49004)}},
   doi={10.1111/j.1439-0388.1991.tb00191.x},
}

\end{biblist}

\end{document}